\title{Euler--Frobenius numbers and rounding}
\date{15 May, 2013}
\author{Svante Janson}
\thanks{Partly supported by the Knut and Alice Wallenberg Foundation}
\address{Department of Mathematics, Uppsala University, PO Box 480,
SE-751~06 Uppsala, Sweden}
\email{svante.janson@math.uu.se}
\newcommand\urladdrx[1]{{\urladdr{\def~{{\tiny$\sim$}}#1}}}
\subjclass[2010]{60C05; 05A15, 11B68, 41A15, 60E05, 60F05} 
\numberwithin{equation}{section}
\renewcommand\le{\leqslant}
\renewcommand\ge{\geqslant}
\newtheorem{theorem}{Theorem}[section]
\newtheorem{lemma}[theorem]{Lemma}
\newtheorem{corollary}[theorem]{Corollary}
\theoremstyle{definition}
\newtheorem{example}[theorem]{Example}
\newtheorem{remark}[theorem]{Remark}
\theoremstyle{remark}
\newenvironment{romenumerate}[1][0pt]{
\addtolength{\leftmargini}{#1}\begin{enumerate}
 }{\end{enumerate}}
\newcounter{oldenumi}
{\setcounter{oldenumi}{\value{enumi}}
\begin{romenumerate} \setcounter{enumi}{\value{oldenumi}}}
{\end{romenumerate}}
\newcounter{thmenumerate}
\newcounter{romxenumerate}   
\newcounter{xenumerate}   
\newcommand{\refT}[1]{Theorem~\ref{#1}}
\newcommand{\refC}[1]{Corollary~\ref{#1}}
\newcommand{\refL}[1]{Lemma~\ref{#1}}
\newcommand{\refR}[1]{Remark~\ref{#1}}
\newcommand{\refS}[1]{Section~\ref{#1}}
\newcommand{\refE}[1]{Example~\ref{#1}}
\newcommand{\refApp}[1]{Appendix~\ref{#1}}
\newcommand{\refTab}[1]{Table~\ref{#1}}
\newcommand{\refand}[2]{\ref{#1} and~\ref{#2}}
\newcommand\REM[1]{{\raggedright\texttt{[#1]}\par\marginal{XXX}}}
\xdef\klockan{\the\count1.0\the\count255}
\xdef\klockan{\the\count1.\the\count255}\fi
\newcommand\nopf{\qed}   
\newcommand{\sumj}{\sum_{j=0}^\infty}
\newcommand{\sumk}{\sum_{k=0}^\infty}
\newcommand{\sumkoooo}{\sum_{k=-\infty}^\infty}
\newcommand{\summ}{\sum_{m=0}^\infty}
\newcommand{\sumn}{\sum_{n=0}^\infty}
\newcommand{\sumin}{\sum_{i=0}^n}
\newcommand{\sumiin}{\sum_{i=1}^n}
\newcommand{\sumiini}{\sum_{i=1}^{n-1}}
\newcommand{\sumjin}{\sum_{j=1}^n}
\newcommand{\sumjn}{\sum_{j=0}^n}
\newcommand{\sumkn}{\sum_{k=0}^n}
\newcommand\set[1]{\ensuremath{\{#1\}}}
\newcommand\Bigset[1]{\ensuremath{\Bigl\{#1\Bigr\}}}
\newcommand\xpar[1]{(#1)}
\newcommand\bigpar[1]{\bigl(#1\bigr)}
\newcommand\Bigpar[1]{\Bigl(#1\Bigr)}
\newcommand\lrpar[1]{\left(#1\right)}
\newcommand\xcpar[1]{\{#1\}}
\def\rompar(#1){\textup(#1\textup)}    
\newcommand\xfrac[2]{#1/#2}
\newcommand\parfrac[2]{\lrpar{\frac{#1}{#2}}}
\newcommand\Bigparfrac[2]{\Bigpar{\frac{#1}{#2}}}
\def\xexp(#1){e^{#1}}
\newcommand\ceil[1]{\lceil#1\rceil}
\newcommand\floor[1]{\lfloor#1\rfloor}
\newcommand\Bigfloor[1]{\Big\lfloor#1\Big\rfloor}
\newcommand\bigfloor[1]{\big\lfloor#1\big\rfloor}
\newcommand\lrfloor[1]{\left\lfloor#1\right\rfloor}
\newcommand\frax[1]{\{#1\}}
\newcommand\ntoo{\ensuremath{{n\to\infty}}}
\newcommand\punkt[1]{\if.#1\else.\spacefactor1000\fi{#1}}
\newcommand\iid{i.i.d\punkt}    
\newcommand\ie{i.e\punkt}
\newcommand\eg{e.g\punkt}
\newcommand\cf{cf\punkt}
\newcommand{\aex}{a.e\punkt}
\newcommand\ii{\mathrm{i}}
\newcommand{\tend}{\longrightarrow}
\newcommand\dto{\overset{\mathrm{d}}{\tend}}
\newcommand\pto{\overset{\mathrm{p}}{\tend}}
\newcommand\eqd{\overset{\mathrm{d}}{=}}
\newcommand\bbR{\mathbb R}
\newcommand\bbC{\mathbb C}
\newcommand\bbZ{\mathbb Z}
\newcounter{CC}
\newcounter{cc}
\newcommand\E{\operatorname{\mathbb E{}}}
\renewcommand\P{\operatorname{\mathbb P{}}}
\newcommand\Var{\operatorname{Var}}
\newcommand\Be{\operatorname{Be}}
\newcommand\Ge{\operatorname{Ge}}
\newcommand\ga{\alpha}
\newcommand\gb{\beta}
\newcommand\gD{\Delta}
\newcommand\gf{\varphi}
\newcommand\gam{\gamma}
\newcommand\gl{\lambda}
\newcommand\go{\omega}
\newcommand\gss{\sigma^2}
\newcommand\eps{\varepsilon}
\renewcommand\phi{\xxx}  
\newcommand\cS{{\mathcal S}}
\newcommand\tU{{\widetilde U}}
\newcommand\tS{{\widetilde S}}
\newcommand\ett[1]{\boldsymbol1\xcpar{#1}}
\newcommand\etta{\boldsymbol1}
\newcommand\qw{^{-1}}
\newcommand\qq{^{1/2}}
\renewcommand{\=}{:=}
\newcommand\intoi{\int_0^1}
\newcommand\intoo{\int_0^\infty}
\newcommand\intoooo{\int_{-\infty}^\infty}
\newcommand\oi{[0,1]}
\newcommand\setoi{\set{0,1}}
\newcommand\dd{\,\mathrm{d}}
\newcommand\ddd{\mathrm{d}}
\newcommand\ddq[1]{\frac{\ddd}{\ddd #1}}
\newcommand{\pgf}{probability generating function}
\newcommand{\mgf}{moment generating function}
\newcommand{\chf}{characteristic function}
\newcommand\lhs{left-hand side}
\newcommand\rhs{right-hand side}
\newcommand\pnrho{P_{n,\rho}}
\newcommand\pnirho{P_{n-1,\rho}}
\newcommand\anrk{A_{n,k,\rho}}
\newcommand\EU{\mathfrak E}
\newcommand\EUrho{\EU_{n,\rho}}
\newcommand\EUx[1]{\EU_{n,#1}}
\newcommand\EUi{\EUx1}
\newcommand\EUo{\EUx0}
\newcommand\ZZ{Z}
\newcommand\znrho{\ZZ_{n,\rho}}
\newcommand{\euler}[2]{\genfrac{ < }{ > }{0pt}{}{#1}{#2}}
\newcommand\rr{$\rho$-rounding}
\newcommand\rrr[1]{\floor{#1}_\rho}
\newcommand\rrx[2]{\floor{#2}_{#1}}
\newcommand\xxn{(x_i)_1^n}
\newcommand\hP{\hat P}
\newcommand\eit{e^{\ii t}}
\newcommand\EFx{Euler--Frobe\-nius }
\newcommand\kk{\varkappa}
\newcommand\gdx[1]{\gD_{#1}}
\newcommand\gdr{\gdx{\rho}}
\newcommand\gdqq{\gdx{1/2}}
\newcommand\gdrgg{\gdx{\rho,\gam}}
\newcommand\hgd{\hat\gD}
\newcommand\xgD{\gD'}
\newcommand\ftd{f_{\hgd}}
\newcommand{\Polya}{P\'olya}
\begin{document}

\begin{abstract} 
We study the Euler--Frobenius numbers, a generalization of the Eulerian
numbers, and the probability distribution obtained by normalizing them.
This distribution can be obtained by rounding a sum of independent
uniform random variables; this is more or less implicit in various results
and we try to explain this and various connections to other areas of
mathematics, such as spline theory.

The mean, variance and (some) higher cumulants of the distribution are
calculated. 
Asymptotic results are given.
We include a couple of applications to rounding errors and election methods.
\end{abstract}

\maketitle

\section{Introduction}\label{S:intro}

The \emph{\EFx polynomial} $P_{n,\rho}(x)$ can be defined by
\begin{equation}
  \label{ef}
\frac{P_{n,\rho}(x)}{(1-x)^{n+1}}
=\Bigpar{\rho+x\frac{\ddd}{\dd x}}^n\frac1{1-x}
=
\sumj (j+\rho)^n x^j
,
\end{equation}
or, equivalently, by the recursion formula
\begin{equation}
  \label{efrec}
P_{n,\rho}(x)
=
\bigpar{nx+\rho(1-x)}P_{n-1,\rho}(x)+x(1-x)\pnirho'(x),
\qquad n\ge1,
\end{equation}
with $P_{0,\rho}(x)=1$;
see \refApp{AEF} for details, some further results and references.
Here $n=0,1,2,\dots$, and $\rho$ is a parameter that can be any complex
number, although we shall be interested mainly in the case $0\le\rho\le1$.
(The special cases $\rho=0,1$ yield the \emph{Eulerian polynomials}, see below.)

It is immediate from \eqref{efrec} that $\pnrho(x)$ is a polynomial in $x$
of degree at most $n$. We write
\begin{equation}
  \label{efa}
\pnrho(x)=\sumkn \anrk x^k.
\end{equation}
The recursion \eqref{efrec} can be translated to the recursion
\begin{equation}
  \label{efarec}
A_{n,k,\rho} = (k+\rho)  A_{n-1,k,\rho}  + (n-k+1-\rho)  A_{n-1,k-1,\rho},
\qquad n\ge1,
\end{equation}
where we let $A_{n,k,\rho}=0$ if $k\notin\set{0,\dots,n}$.
Following \cite{GawronskiN}, we call these numbers
\emph{\EFx numbers}.
See \refTab{tabrho} for the first values.
(The special cases $\rho=0,1$ yield the \emph{Eulerian numbers}, see below.)

We usually regard $\rho$ as a fixed parameter, but we note that $\pnrho(x)$
also is a polynomial in $\rho$, see \eqref{q21}; thus the
\EFx numbers $A_{n,k,\rho}$ are polynomials in $\rho$, as also
follows from \eqref{efarec}. 
(Some papers conversely consider $\pnrho(x)$ as a
polynomial in $\rho$, with $x$ as a parameter; see \eg{}
\cite{Carlitz,Schoenberg} and \refApp{Aspline}.)

\begin{table}
  \begin{tabular}{l|l|l|l|l}
	$n\backslash k$ & 0 & 1 & 2 & 3 
\\ \hline
0 & 1 & &&
\\ \hline
1 & $\rho$ & $1-\rho$ &
\\ \hline
2 & $\rho^2$ & $1+2\rho-2\rho^2$ & $1-2\rho+\rho^2$
\\ \hline
3 & $\rho^3$ & $1+3\rho+3\rho^2-3\rho^3$ & $4-6\rho^2+3\rho^3$
& $1-3\rho+3\rho^2-\rho^3$
\\ \hline
  \end{tabular}
\vskip 4pt 
\caption{The \EFx numbers $A_{n,k,\rho}$ for small $n$.}
\label{tabrho}
\end{table}

It follows from \eqref{efarec} that if $0\le\rho\le1$, then
$A_{n,k,\rho}\ge0$, so if we normalize by dividing by $\sumkn 
A_{n,k,\rho}=\pnrho(1)=n!$, see \eqref{pnrho1}, we obtain a probability
distribution on \set{0,\dots,n}; we call this distribution the
\emph{\EFx distribution} and let $\EU_{n,\rho}$ denote a random
variable with this distribution, \ie{}
\begin{equation}
  \label{efdist}
\P(\EU_{n,\rho}=k) = A_{n,k,\rho}/P_{n,\rho}(1)=A_{n,k,\rho}/n!.
\end{equation}
Equivalently,
$\EUrho$ has the \pgf{}
\begin{equation}\label{pgf}
  \E x^{\EUrho} 
=\sum_{k=0}^n\P(\EUrho=k)x^k
= \frac{\pnrho(x)}{\pnrho(1)}
=\frac{\pnrho(x)}{n!}.
\end{equation}
With a minor abuse of notation, we also denote this distribution by $\EUrho$.

\begin{remark}\label{R1}
Since $A_{1,0,\rho}=\rho$ and $A_{1,1,\rho}=1-\rho$, see \refTab{tabrho},
the condition
$0\le\rho\le1$ is also necessary for \eqref{efdist} to define a probability
distribution for all $n\ge1$.  
(We will extend the definition of $\EUrho$ to arbitrary $\rho$ later, see
\eqref{efreal}, but \eqref{efdist} holds only for $\rho\in\oi$.)
\end{remark}

The main purpose of the present paper is to show that this distribution
occurs when rounding sums of uniform random variables, and to give various
consequences and connections to other problems. Our main result can be
stated as follows.
(A proof  is given in \refS{SSn}.)

\begin{theorem}
  \label{T1}
Let $U_1,\dots,U_n$ be independent random variables uniformly distributed on
$\oi$, and let $S_n\=\sumiin U_i$. Then, for every $n\ge1$ and
$\rho\in\oi$, the random
variable $\floor{S_n+\rho}$ has the \EFx distribution
$\EUx{1-\rho}$, 
\ie,
\begin{equation}\label{t1}
  \P\bigpar{\floor{S_n+\rho}=k}=\frac{A_{n,k,1-\rho}}{n!},
\qquad k\in\bbZ.
\end{equation}
\end{theorem}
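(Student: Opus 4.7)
My plan is to compute $\P(\floor{S_n+\rho}=k)$ directly from the Irwin--Hall distribution of $S_n$, and to match the result against an explicit formula for $A_{n,k,1-\rho}$ that I will extract from the generating-function definition~\eqref{ef}.

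First I would rewrite the event. Since $\floor{S_n+\rho}=k$ exactly when $k-\rho\le S_n<k+1-\rho$, one has $\P(\floor{S_n+\rho}=k)=F_n(k+1-\rho)-F_n(k-\rho)$, where $F_n$ denotes the CDF of $S_n$. The classical Irwin--Hall formula gives
\begin{equation*}
F_n(x)=\frac{1}{n!}\sum_{j\ge0}(-1)^j\binom{n}{j}(x-j)_+^n,\qquad x\in\bbR,
\end{equation*}
with $(y)_+:=\max(y,0)$ (so the sum is finite). For $\rho\in(0,1)$ the positivity conditions $k+1-\rho-j>0$ and $k-\rho-j>0$ cut the sums off at $j=k$ and $j=k-1$ respectively, yielding
\begin{equation*}
\P(\floor{S_n+\rho}=k)=\frac{1}{n!}\sum_{j=0}^{k}(-1)^j\binom{n}{j}(k+1-\rho-j)^n-\frac{1}{n!}\sum_{j=0}^{k-1}(-1)^j\binom{n}{j}(k-\rho-j)^n.
\end{equation*}

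Second, I would reindex $j\mapsto j-1$ in the latter sum and combine with the first via Pascal's identity $\binom{n}{j}+\binom{n}{j-1}=\binom{n+1}{j}$ (with $\binom{n}{-1}=0$) to reach
\begin{equation*}
\P(\floor{S_n+\rho}=k)=\frac{1}{n!}\sum_{j=0}^{k}(-1)^j\binom{n+1}{j}(k+1-\rho-j)^n.
\end{equation*}

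Third, I would produce a matching expression for the \EFx numbers straight from \eqref{ef}. Rewriting that identity as $\pnrho(x)=(1-x)^{n+1}\sum_{j\ge0}(j+\rho)^nx^j$ and extracting the coefficient of $x^k$ gives
\begin{equation*}
A_{n,k,\rho}=\sum_{i=0}^{k}(-1)^i\binom{n+1}{i}(k-i+\rho)^n.
\end{equation*}
The substitution $\rho\mapsto1-\rho$ then reproduces exactly the preceding display, establishing \eqref{t1} for $\rho\in(0,1)$. The endpoints $\rho=0$ and $\rho=1$ follow by continuity, since each side is a polynomial in $\rho$ on $\oi$.

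The only step demanding real care is the bookkeeping in the second step: the range of summation in each Irwin--Hall sum depends on how many integer break-points of $F_n$ lie below $k+1-\rho$ and $k-\rho$, and the regime $\rho\in(0,1)$ is precisely where these two endpoints straddle the integer $k$. Once the cut-offs are correctly identified, Pascal's rule makes the identification with $A_{n,k,1-\rho}/n!$ essentially mechanical.
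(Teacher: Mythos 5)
Your proof is correct, but it takes a genuinely different route from the paper. The paper reduces Theorem~\ref{T1} to the density identity $f_{n+1}(x)=A_{n,\floor x,\frax x}/n!$ (Theorem~\ref{T3}) and proves that by computing the Laplace transforms of both sides and checking they both equal $n!\bigpar{(1-e^{-s})/s}^{n+1}$; this sidesteps all combinatorial bookkeeping at the cost of a small continuity argument to upgrade a.e.\ equality to pointwise equality. You instead match two explicit closed forms: the Irwin--Hall difference $F_n(k+1-\rho)-F_n(k-\rho)$ with the cut-offs of the $(\cdot)_+^n$ terms tracked carefully, against the coefficient-extraction formula $A_{n,k,\rho}=\sum_{i=0}^{k}(-1)^i\binom{n+1}{i}(k-i+\rho)^n$ obtained from \eqref{ef}. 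That formula is precisely \eqref{fun} in Remark~\ref{Rfun}, but note the logical order is reversed: the paper derives \eqref{fun} as a corollary of Theorem~\ref{T3} together with \eqref{fn}, whereas you establish it independently from the generating-function definition and use it as the engine of the proof. Your identification of the summation ranges ($j\le k$ versus $j\le k-1$ because $k-\rho$ and $k+1-\rho$ straddle $k$ for $\rho\in(0,1)$), the Pascal-rule merge, and the handling of the endpoints $\rho\in\set{0,1}$ by continuity of $F_n$ and polynomiality of $A_{n,k,\rho}$ in $\rho$ are all sound; the argument is more elementary and self-contained than the transform proof, at the price of more delicate index bookkeeping.
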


\refT{T1} can also be stated geometrically, see \refS{Svol}.

The distribution of $S_n$ was calculated already by Laplace
\cite[pp.~257--260]{Laplace}
(who used it for a statistical test showing that the orbits of the planets
are not randomly distributed, while the orbits of the comets seem to be
random
\cite[pp.~261--265]{Laplace}),
see also \eg{} \cite[Theorem I.9.1]{FellerII}.
The case $\rho=0$ (or $\rho=1$)
of \refT{T1}, which is a connection between the
distribution of $S_n$ and Eulerian numbers (see below), is well-known, see \eg{}
\cite{Tanny}, \cite{FoataNATO}, \cite{StanleyNATO}, \cite{Sachkov}. 
The case $\rho=1/2$, which means standard rounding of $S_n$,
is given in \cite{ChakerianL}.
Moreover, the theorem is implicit in
\eg{}
\cite[Lecture 3]{Schoenberg}, %
but I have not seen it stated explicitly in this form.
This paper is therefore partly expository, trying to explain some of the
many connections to other results in various areas.
(However, we do not attempt to give a complete history. Furthermore, there
are many papers on algebraic and other aspects of \EFx polynomials and
numbers that are not mentioned here.)

Before discussing rounding and \refT{T1} further, we return to the
\EFx polynomials and numbers to give some background.

The cases $\rho=0$ and $\rho=1$ are equivalent; we have 
$ P_{n,0}(x)=xP_{n,1}(x)$ and thus $A_{n,k+1,0}=A_{n,k,1}$ 
and $\EU_{n,0}\eqd\EU_{n,1}+1$
for all
$n\ge1$,
as follows directly from \eqref{ef} or by induction from \eqref{efrec} or
\eqref{efarec}. 
(Cf.\ \eqref{t1}, where the \lhs{} obviously has the corresponding property.)
This is the most important case and appears in many contexts.
(\citet{Carlitz} remarks that these polynomials and numbers have been
frequently rediscovered.) 
The numbers $A_{n,k,1}$ and the polynomials
$P_{n,1}$ were studied already by Euler \cite{E55,E212,E352}, and
the numbers 
$A_{n,k,1}$ are therefore called \emph{Eulerian numbers},
see \cite[A173018]{OEIS} and \refTab{tab1}; the usual modern
notation is
$\euler nk$ \cite{CM}, \cite[\S26.14]{NIST}.
Similarly, the polynomials $P_{n,1}(x)$ are usually called \emph{Eulerian
  polynomials}. 
(Notation varies, and these names are also used for the shifted versions
that we denote by
$A_{n,k,0}$ and $P_{n,0}(x)$, see \eg{} 
\cite[A008292, A173018 and A123125]{OEIS}.
Already Euler used both versions:
$P_{n,0}$ in \cite{E55} 
and $P_{n,1}$ in \cite{E212,E352}.)  

\begin{table}
  \begin{tabular}{l|r|r|r|r|r|r|r}
	$n\backslash k$ & 0 & 1 & 2 & 3 &4&5&6
\\ \hline
0 & 1 & &&&&&
\\ \hline
1 & $1$ &&&&&
\\ \hline
2 & $1$ & $1$ &&&&
\\ \hline
3 & 1 & 4 & 1 &&&
\\ \hline
4 & 1& 11& 11& 1 &&
\\ \hline
5 & 1& 26& 66& 26& 1 &
\\ \hline
6&  1& 57& 302& 302& 57& 1
\\ \hline
7 & 1& 120& 1191& 2416& 1191& 120& 1
\\ \hline
  \end{tabular}
\vskip 4pt 
\caption{The Eulerian numbers $A_{n,k,1}=A_{n,k+1,0}$ for small $n$.
The row sums are $n!$.}
\label{tab1}
\end{table}

Euler \cite{E55,E212,E352} used these numbers to calculate the sum of
series;
see also \cite{Hirzebruch} and \cite{Foata}.
(In particular, Euler \cite{E352} calculated the sum of the divergent series 
$\sum_{k=1}^\infty (-1)^{k-1} k^n$ for integers $n\ge0$; in modern terminology
he found the Abel sum as $2^{-n-1}P_{n,1}(-1)$ by letting $x\to-1$ in
\eqref{ef}.)
They have since appeared in many other contexts.
For example,
the Eulerian number $A_{n,k,1}=\euler nk$ equals
the number of permutations of length $n$ with $k$ descents (or ascents),
see \eg{} 
\cite[Chapter 8.6]{Riordan}, 
\cite[Chapter 10]{DavidB} or \cite[Section 1.3]{StanleyI};
this well-known combinatorial interpretation is often taken as the
definition of Eulerian numbers.
(In the terminology introduced above, the number of descents
in a  random permutation thus has the \EFx distribution $\EUi$.)
Furthermore, the Eulerian numbers also enumerate permutations 
with $k$ exceedances, see again \cite[Section 1.3]{StanleyI}, where also
further related combinatorial interpretations are given.
See also \cite{cd-h} for an enumeration with staircase tableaux, and
\cite{FoataSch} for further related results.
Some other examples where Eulerian numbers and polynomials appear are
number theory \cite{Frobenius}, \cite[p.~328]{MacMahon}, 
summability \cite[p.~99]{Peyerimhoff},
statistics \cite{Kimber},
control theory \cite{Weller+},
and splines
\cite{Schoenberg,Schoenberg-LNM}, 
\cite[Table 2, p.~137]{Schumaker}
(see also \refApp{Aspline}).

The case $\rho=1/2$ also occurs in several contexts.
In this case, it is often more convenient to consider the numbers 
$B_{n,k}\=2^n  A_{n,k,1/2}$ 
which are integers and satisfy the recursion
\begin{equation}
  \label{efbrec}
B_{n,k} = (2k+1)  B_{n-1,k}  + (2n-2k+1)  B_{n-1,k-1},
\qquad n\ge1,
\end{equation}
with $B_{0,0}=1$ (and $B_{n,k}=0$ if $k\notin\set{0,\dots,n}$),
see \cite[A060187]{OEIS} and \refTab{tab1/2}.
These numbers are sometimes
called \emph{Eulerian numbers of type B}.
They seem to have been introduced by \citet[p.~331]{MacMahon} 
in number theory.
(It seems likely that they were used already by Euler, who in \cite{E352}
also says, without giving the calculation, that he can prove similar results
for $\sum_{k=1}^\infty (-1)^{k} (2k-1)^m$;
see \cite{Hirzebruch} for a calculation using 
$P_{n,1/2}$ and methods of \cite{E352}.)
The numbers $B_{n,k}$ also have combinatorial interpretations, for example as
the numbers of descents in signed permutations, \ie, in the
hyperoctahedral group
\cite{Brenti,ChowG,SchmidtS}.
Furthermore,
the numbers $B_{n,k}$ and the distribution $\EU_{n,1/2}$ appear  in the
study of random staircase tableaux \cite{d-hh}.
$P_{n,1/2}(x)$ and $B_{n,k}$ appear in spline theory
\cite[Lecture 3.4]{Schoenberg} (see \refApp{Aspline}).
They also appear 
(as do $P_{n,1}(x)$ and $A_{n,k,1}$) in \cite{Franssens}, as special cases of
more general polynomials. 
\begin{table}
  \begin{tabular}{l|r|r|r|r|r|r|r}
	$n\backslash k$ & 0 & 1 & 2 & 3 &4&5&6
\\ \hline
0 & 1 & &&&&&
\\ \hline
1 & $1$ & 1& &&&&
\\ \hline
2 & 1& 6& 1& &&&
\\ \hline
3 & 1& 23& 23& 1& &&
\\ \hline
4 & 1& 76& 230& 76& 1& &
\\ \hline
5 & 1& 237& 1682& 1682& 237& 1&
\\ \hline
6&  1& 722& 10543& 23548& 10543& 722& 1
\\ \hline
  \end{tabular}
\vskip 4pt 
\caption{The Eulerian numbers of type B, 
$B_{n,k}=2^nA_{n,k,\frac12}$, for small $n$.
The row sums are $2^nn!$.}
\label{tab1/2}
\end{table}

The general polynomials $\pnrho$ 
were perhaps first introduced by \citet{Carlitz} 
(in the form $\pnrho(x)/(x-1)^n$, \cf{} \eqref{carl} below). 
They  are important in spline theory, see \eg{}
\cite[Lecture 3]{Schoenberg}, \cite{Schoenberg-LNM},
\cite{terMorsche}, \cite{ReimerS} and \refApp{Aspline}.
They  appear also (as a special case) in
the study of random staircase tableaux  \cite{SJ276}.
Note also that
the function \eqref{ef} is the special case $s\in\set{0,-1,-2,\dots}$ of
Lerch's transcendental function $\Phi(z,s,\rho)=\sumj (j+\rho)^{-s} z^j$,
see  \cite{Lerch}, \cite[\S25.14]{NIST} and 
\eg{} \cite{Truesdell} with further references.
The  general Eulerian Numbers $A_{n,k}(a,d)$ defined by
\cite{Xiong-etal} equal our $d^nA_{n,k+1,1-a/d}$.

The special case $\rho=1/N$ where $N\ge1$ is an integer appears in
combinatorics. The integers $N^n A_{n,k,1/N}$ 
(\cf{} $B_{n,k}$ above, which is the case $N=2$)
enumerate indexed permutations with $k$ descents (or with $k$
exceedances), 
generalizing the cases $N=1$ (permutations) and
$N=2$ (signed permutations) above, see \cite{Steingrimsson}.

\begin{remark}
Frobenius \cite{Frobenius} studied the Eulerian polynomials $P_{n,1}$
in detail (with applications to number theory); he also gave them the name
Eulerian (in German).
The Eulerian polynomials have sometimes been called 
\emph{Euler--Frobenius polynomials} (see \eg{} \cite[p.~22]{{Schoenberg}}
and \cite{Weller+}), and 
the generalization \eqref{ef} considered here has been called
\emph{generalized Euler--Frobenius polynomials}
by various authors (\eg{} \cite{Merz2,Reimer:extremal,Siepmann,ReimerS}),
but this has also been simplified by dropping ``generalized''  
and calling them too just
\emph{Euler--Frobenius polynomials} 
(\eg{} \cite{Merz1,Reimer:main,GawronskiS,GawronskiN}). 
We follow the latter usage, for convenience rather than for historical accuracy.
(As far as I know, neither Euler nor Frobenius considered this generalization.)
The names \emph{Frobenius--Euler polynomials} and \emph{numbers} are also used
in the literature (\eg{} \cite{Simsek-etal}).
The reader should note that also other generalizations of Eulerian
polynomials have been called Euler--Frobenius polynomials,
and that, conversely, other names have been used for our \EFx
polynomials \eqref{ef}.
Note futher that \emph{Euler numbers} and \emph{Euler polynomials} (usually)
mean
something different, see \refR{Reuler}.
\end{remark}

\begin{remark}
As said above, the notation varies.
Examples of other notations for our $\pnrho(x)$ are
$H_n(\rho,x)$ (\eg{} \cite{Merz1,ReimerS}) 
and $P_n(x,1-\rho)$ (\eg{} \cite{terMorsche:relations}).
A different notation used by \eg{} \citet{Frobenius} and \citet{Carlitz}
(in the classical case $\rho=1$) is $R_n = P_{n,1}$ and 
$H^n$ or
$H_n(x)=P_{n,1}(x)/(x-1)^n$. 
\citet{Carlitz} uses for the general case
\begin{equation}\label{carl}
H_n(u\mid\gl)=\frac{P_{n,1-u}(\gl)}{(\gl-1)^n}=\sumjn \binom nj u^{n-j}H_j(\gl),
\end{equation}
where the last equality follows from \eqref{q211}.
Similarly, \eg{} \citet{Schoenberg} uses $A_n(x;t)$ for our 
$(1-t\qw)^{-n}P_{n,x}(t\qw)=(t-1)^{-n}P_{n,1-x}(t)$ (which thus equals
$H_n(x\mid t)$ in \eqref{carl}), \cf{} \eqref{psymm}; he further uses
$\Pi_n(t)$ for $P_{n,1}(t)$ and $\rho_n(t)$ for $2^nP_{n,1/2}(t)$.
\end{remark}

\begin{remark}
Many other combinatorial numbers satisfy 
recursion formulas similar to \eqref{efarec}; see
\cite{WangYeh} for a general version.
There are also many other generalizations of Eulerian numbers and
polynomials that have been defined by various authors; for a few examples,
see
\cite{Carlitzq,Carlitzq2},
\cite{Carlitz-higher},
\cite{Carlitz-Scoville},
\cite{Dumont},
\cite{Tsumura}, 
\cite{Simsek-etal},
\cite{Simsek},
\cite{Xiong-etal}.
In particular, note the generalized Eulerian numbers
$A(r,s\,|\, \ga,\gb)$ defined by 
\citet{Carlitz-Scoville}; the \EFx numbers are the special case
$A_{n,k,\rho} = A(n-k,k\,|\,\rho,1-\rho)$.
\end{remark}

As said above, the case $\rho=0$ (or $\rho=1$) of \refT{T1} is well-known.
We end this section by recalling the simple
proof by Stanley \cite{StanleyNATO}
giving an explicit connection between $\floor{S_n}$ and  the number of
descents in a 
random permutation,
which, as said above, has the distribution $\EUi$;
we give it here in probabilistic formulation rather than the original geometric,
\cf{} \refT{T2}: 

In the notation of \refT{T1}, let $V_j$ be the fractional part
$\frax{S_j}\=S_j-\floor{S_j}$; 
then $V_1,\dots,V_n$ is another sequence of independent uniformly
distributed random variables. Thus the number of descents in a random
permutation of length $n$ 
has the same distribution as 
$\sum_{i=2}^{n} \ett{V_{i-1}>V_{i}}$. On the other hand, $V_{i-1}>V_{i}$
exactly when the sequence $S_1,\dots,S_n$ passes an integer;
thus $\ett{V_{i-1}>V_{i}}=\floor{S_i}-\floor{S_{i-1}}$ and this
sum equals $\floor{S_n}$. 

An extension of this proof to the case
$\rho=1/N$ and indexed permutations is given in 
\cite[Theorem 50]{Steingrimsson};
a modification for the case $\rho=1/2$ and 
(one version of) descents in signed
permutations is given in \cite{SchmidtS}.

\refS{Svol} gives a geometric formulation of \refT{T1} and some related
results.
\refS{SSn} gives a proof of \refT{T1} together with further connections
between the distribution of $S_n$ and \EFx  numbers.
\refS{Srrr} introduces \rr, and states \refT{T1} using it.
\refS{Schf} uses this to derive results on the characteristic function and
moments of the \EFx distribution.
\refS{SCLT} shows asymptotic normality and gives further asymptotic results.
\refS{Sval} gives applications to a well known problem on rounding.
\refS{Share} gives  applications to an election method. 
Finally, the appendices give further background and connections to other
results. 

We let throughout $U$ and $U_1,U_2,\dots$ denote independent uniform random
variables in $\oi$, and $S_n\=\sumiin U_i$.

\section{Volumes of slices}\label{Svol}

\refT{T1} can also be stated
geometrically as follows. A proof is given in \refS{SSn}.
\begin{theorem}
  \label{T2}
Let $Q^n\=\oi^n$ be the $n$-dimensional unit cube and let, for $s\in\bbR$,
$Q^n_s$ be the slice
\begin{equation}\label{t2}
  Q^n_s\=\Bigset{(x_i)_1^n\in Q^n:s-1\le \sumiin x_i\le s}.
\end{equation}
Then the volume of $Q^n_{k+\rho}$
is
$\xfrac{A_{n,k,\rho}}{n!}$, 
for all $n\ge1$, $k\in\bbZ$ and $\rho\in\oi$.
\end{theorem}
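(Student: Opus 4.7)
The plan is to deduce \refT{T2} as a direct probabilistic translation of \refT{T1}. The starting point is that for any Borel set $A\subseteq Q^n$, the volume of $A$ equals $\P\bigpar{(U_1,\dots,U_n)\in A}$, where $U_1,\dots,U_n$ are the i.i.d.\ uniform$\oi$ random variables fixed at the end of \refS{S:intro}. Applying this to $A=Q^n_{k+\rho}$ gives
\begin{equation*}
\operatorname{vol}\bigpar{Q^n_{k+\rho}}=\P\bigpar{k+\rho-1\le S_n\le k+\rho}.
\end{equation*}

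Next I would rewrite the event on the right in terms of the floor function. Since $S_n$ has a density, the endpoints of any interval contribute probability zero, so almost surely
\begin{equation*}
k+\rho-1\le S_n\le k+\rho \iff k\le S_n+(1-\rho)<k+1 \iff \floor{S_n+(1-\rho)}=k.
\end{equation*}
Because $1-\rho\in\oi$, \refT{T1} (applied with $\rho$ replaced by $1-\rho$) then yields
\begin{equation*}
\P\bigpar{\floor{S_n+(1-\rho)}=k}=\frac{A_{n,k,1-(1-\rho)}}{n!}=\frac{A_{n,k,\rho}}{n!},
\end{equation*}
which is the desired identity. It holds uniformly for all $k\in\bbZ$, matching the convention that $A_{n,k,\rho}=0$ outside $\set{0,\dots,n}$: in that range the slice $Q^n_{k+\rho}$ lies outside $Q^n$ up to a null set and therefore has zero volume as well.

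The argument is essentially just the dictionary ``volume $=$ probability'', so I do not anticipate any genuine obstacle. The only point requiring mild care is that the slice $Q^n_s$ in \eqref{t2} is defined by a closed interval, while the floor convention $\floor{y}=k\iff y\in[k,k+1)$ produces a half-open interval; absolute continuity of $S_n$ makes this discrepancy immaterial.
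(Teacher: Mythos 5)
Your proposal is correct and is essentially the paper's own argument: the paper establishes Theorem~\ref{T2} by exactly this ``volume equals probability'' translation, showing $\operatorname{vol}(Q^n_{k+\rho})=\P(k+\rho-1\le S_n<k+\rho)=\P(\floor{S_n+1-\rho}=k)$ and invoking the equivalence with Theorem~\ref{T1} (whose own proof goes through the density identity of Theorem~\ref{T3}, so there is no circularity in citing it). Your remark about the closed versus half-open interval being immaterial by absolute continuity of $S_n$ is the right way to handle the one small technicality.
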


As said above, the case $\rho=0$ (or $\rho=1$), when the volumes are given
by the Eulerian numbers $A_{n,k,1}$, is well-known
\cite{FoataNATO,StanleyNATO, Hensley,ChakerianL, SchmidtS}.

The case $\rho=1/2$, which corresponds to standard rounding of $S_n$ in
\refT{T1}, and where the result can be stated using the Eulerian numers of
type B $B_{n,k}$ in \eqref{efbrec}, is treated in \cite{ChakerianL}
(with reference to an unpublished technical memorandum \cite{Slepian})
and in \cite{SchmidtS}.

\cite{ChakerianL} gives also the $(n-1)$-dimensional area of the 
slice \set{\xxn\in Q^n:\sumiin x_i=s}.
This equals, by simple geometry, $\sqrt n$ times the density function of
$S_n$ at $s$,  which by \eqref{t3x} below equals
(except when $n=1$ and $s=1$)
\begin{equation}
 \frac{ \sqrt{n}}{(n-1)!} A_{n-1,\floor{s},\frax s}.
\end{equation}

See also \cite{Polya} and \cite{ChakerianL}, and the further references in
the latter, for related results on 
more general slices of cubes. Furthermore, 
\cite{SchmidtS} give related results, involving Eulerian numbers, on some
slices of a simplex.

Mixed volumes of two consequtive slices $Q^n_k$ and $Q^n_{k+1}$ (with
integer $k$) are studied by \cite{ERS}, and further by \cite{WangXuXu}
where relations to our $A_{n,k,\rho}$ (and $f_{n+1}(x)$ in \refT{T3} below)
are given
based on the fact \cite{ERS} that the Minkowski sum 
$\gl Q^n_k+Q^n_{k+1}=(\gl+1) Q^n_{k+1/(\gl+1)}$.

\section{The distribution of $S_n$}\label{SSn}
Let $F_n(x)$ be the distribution function and $f_n(x)=F_n'(x)$ the density
function of $S_n\=\sumiin U_i$. 
Then $f_1(x)$, the density function of $S_1=U_1$, is the
indicator function $\etta_{\oi}$ of the interval $\oi$, and $f_n$ is the
$n$-fold convolution $\etta_{\oi}*\dotsm*\etta_{\oi}$.
Hence, for $n\ge1$,
\begin{equation}
  \label{sw}
f_{n+1}(x)=f_n*\etta_{\oi}(x) = \intoi f_n(x-y)\dd y
=F_n(x)-F_n(x-1).
\end{equation}
Note that the density $f_n(x)$
is continuous for $n\ge2$,
as a convolution of bounded, integrable functions
(or by \eqref{sw}, since $F_n(x)$ is continuous for $n\ge1$), 
while $f_1(x)$ is discontinuous at $x=0$ and $x=1$. We regard $f_1(x)$ as
undetermined at these two points, and we will tacitly assume that $x\neq0,1$ 
in equations involving $f_1(x)$ (such as \eqref{fn} when $n=1$).

The distribution of $S_n$ was, as said above,  
calculated already by \citet[pp.~257--260]{Laplace}
(by taking the limit of a discrete version), 
see also \eg{} 
\citet[XI.7.20]{FellerI} (where the result is attributed to Lagrange)
and 
\cite[Theorem I.9.1]{FellerII} (with a simple proof using \eqref{sw} and
induction); the result is the following.
(A more general formula for the sum of independent
uniform random variables on different intervals is given by \citet{Polya}.)
We use the notation $(x)_+^n\=(\max(x,0))^n$, interpreted as $0$ when $x\le0$
and $n\ge0$.
\begin{theorem}[E.g.\ \cite{Laplace}, \cite{FellerII}]
For $n\ge1$,
$S_n$ has the distribution function
\begin{equation}\label{Fn}
  F_n(x)\=\P(S_n\le x) = \frac1{n!}\sumjn (-1)^j \binom nj (x-j)_+^n
\end{equation}
and density function
\begin{equation}\label{fn}
f_n(x)\=F'_n(x) = \frac1{(n-1)!}\sumjn (-1)^j \binom nj (x-j)_+^{n-1}.
\end{equation}
\nopf 
\end{theorem}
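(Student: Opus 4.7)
The plan is to prove \eqref{Fn} by induction on $n$, and then obtain \eqref{fn} simply by differentiating under the sum, using the fact that $\frac{\ddd}{\ddd x}(x-j)_+^n=n(x-j)_+^{n-1}$ for $n\ge1$ (and interpreting the boundary behaviour at $x=j$ correctly). Since integration and differentiation commute with the finite sum, it is enough to establish the formula for $F_n$.

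For the base case $n=1$ one checks directly that $x_+-(x-1)_+$ equals $0$, $x$, or $1$ according as $x<0$, $0\le x\le 1$, or $x>1$, which is $F_1(x)=\P(U_1\le x)$. For the inductive step, assume \eqref{Fn} for $n$. By \eqref{sw},
\begin{equation*}
f_{n+1}(x)=F_n(x)-F_n(x-1)
=\frac1{n!}\sumjn(-1)^j\binom nj\bigsqpar{(x-j)_+^n-(x-j-1)_+^n}.
\end{equation*}
Reindex the second sum by $j\mapsto j-1$ and combine the two sums; Pascal's rule $\binom{n}{j}+\binom{n}{j-1}=\binom{n+1}{j}$ (with the usual convention that $\binom nk=0$ outside $0\le k\le n$) gives
\begin{equation*}
f_{n+1}(x)=\frac1{n!}\sum_{j=0}^{n+1}(-1)^j\binom{n+1}{j}(x-j)_+^n.
\end{equation*}
Finally, integrating from $-\infty$ to $x$ term by term, using $\int_{-\infty}^{x}(t-j)_+^n\dd t=(x-j)_+^{n+1}/(n+1)$, yields \eqref{Fn} with $n$ replaced by $n+1$, completing the induction.

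Differentiating \eqref{Fn} once and using $\ddq{x}(x-j)_+^n=n(x-j)_+^{n-1}$ gives \eqref{fn}; for $n=1$ this produces $f_1(x)=\etta_{\oi}$ away from the discontinuities at $0,1$, consistent with the convention stated after \eqref{sw}.

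The only mildly delicate step is the reindexing combined with Pascal's rule; the boundary terms ($j=0$ in the first sum and $j=n+1$ in the second) correctly contribute $\binom{n+1}{0}=\binom{n+1}{n+1}=1$, so the telescoped sum closes cleanly. Otherwise every step is routine manipulation of the truncated-power functions and their antiderivatives.
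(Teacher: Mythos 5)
Your proof is correct and is precisely the argument the paper points to: the statement is given without proof, with a citation to Feller noting ``a simple proof using \eqref{sw} and induction'', which is exactly your convolution--telescoping--Pascal's rule induction. All the steps check out, including the boundary terms in the reindexing and the base case $F_1(x)=x_+-(x-1)_+$.
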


It is easy to see that Theorems \refand{T1}{T2} are equivalent to the
following relation between the densities $f_n$ and the \EFx
numbers.
(
The case $\rho=0$ is noted in \cite{Esseen}. Moreover,
the relation is well-known in the spline setting, see \eg{} 
\cite[Theorem 3.2]{Schoenberg} (for $\rho=1$) and 
\cite{terMorsche,terMorsche:relations,Siepmann-Dr,Siepmann};
see also (for the case $\rho=0$ or $1$) \cite{WangXuXu,He}.)
\begin{theorem}\label{T3}
  For integers $n\ge0$ and $k\in\bbZ$, and $\rho\in\oi$,
  \begin{equation}\label{t3}
	f_{n+1}(k+\rho)=\frac{A_{n,k,\rho}}{n!}.
  \end{equation}
Equivalently, for every real $x$,
\begin{equation}\label{t3x}
  f_{n+1}(x)=\xfrac{A_{n,\floor{x},\frax x}}{n!}.
\end{equation}
\end{theorem}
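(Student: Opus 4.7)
The plan is to prove \eqref{t3} by induction on $n$; the equivalent form \eqref{t3x} then follows at once from writing $x=\floor{x}+\frax{x}$ with $\frax{x}\in[0,1)$. Since for $n\ge1$ the density $f_{n+1}$ is continuous and $A_{n,k,\rho}$ is polynomial in $\rho$, both sides of \eqref{t3} are continuous in $\rho$, so it suffices to treat $\rho\in(0,1)$. The base case $n=0$ is immediate: $f_1(k+\rho)=\ett{k=0}$ matches $A_{0,0,\rho}=1$ and $A_{0,k,\rho}=0$ for $k\neq0$.

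For the inductive step, assume $f_n(k+\rho)=A_{n-1,k,\rho}/(n-1)!$ for every $k\in\bbZ$ and $\rho\in(0,1)$. The bridge to \eqref{efarec} is the classical cardinal B-spline recursion
\begin{equation*}
nf_{n+1}(x)=xf_n(x)+(n+1-x)f_n(x-1),\qquad x\in\bbR.
\end{equation*}
Setting $x=k+\rho$ and substituting the inductive hypothesis yields
\begin{equation*}
n!\,f_{n+1}(k+\rho)=(k+\rho)A_{n-1,k,\rho}+(n-k+1-\rho)A_{n-1,k-1,\rho},
\end{equation*}
which is exactly $A_{n,k,\rho}$ by the Euler--Frobenius recursion \eqref{efarec}. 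Hence the whole theorem reduces to the B-spline recursion.

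This is the main (and essentially only) obstacle, but it can be verified directly from the explicit formula \eqref{fn}. Writing $x=(x-j)+j$ together with the identity $(x-j)(x-j)_+^{n-1}=(x-j)_+^n$ expresses $xf_n(x)$ as a combination of $(x-j)_+^n$ and $j(x-j)_+^{n-1}$ terms with coefficients $(-1)^j\binom{n}{j}/(n-1)!$. Re-indexing $j\mapsto j-1$ in the sum for $f_n(x-1)$ and writing $n+1-x=(n+1-j)-(x-j)$ puts $(n+1-x)f_n(x-1)$ into the same shape, with the lower-degree terms now carrying the factor $(n+1-j)\binom{n}{j-1}$. The $(x-j)_+^{n-1}$ contributions then cancel by the absorption identity $j\binom{n}{j}=(n+1-j)\binom{n}{j-1}$, while the $(x-j)_+^n$ contributions combine via Pascal's rule $\binom{n}{j}+\binom{n}{j-1}=\binom{n+1}{j}$ to give exactly $nf_{n+1}(x)$ by \eqref{fn}. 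The argument is purely computational but hinges on these two binomial identities conspiring to line up.
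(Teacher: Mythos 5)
Your proof is correct, but it is not the route the paper takes. The paper proves Theorems \ref{T1}, \ref{T2} and \ref{T3} simultaneously by computing the Laplace transform of each side of \eqref{t3x} --- using only the convolution structure of $f_{n+1}$ and the defining series \eqref{ef} for $P_{n,\rho}$ --- and then upgrading the resulting a.e.\ equality to pointwise equality by piecewise continuity. Your induction on $n$ via the B-spline recursion $nf_{n+1}(x)=xf_n(x)+(n+1-x)f_n(x-1)$ is precisely the alternative the paper gestures at but does not carry out: the remark following the paper's proof records this recursion as \eqref{frec}, observes that under \eqref{t3} it is equivalent to \eqref{efarec}, and notes that conversely \eqref{frec} yields \eqref{t3} by induction (the paper also sketches a cousin of this induction using the derivative of \eqref{sw} together with \eqref{kem}). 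You supply the one ingredient the paper leaves to the literature, namely a direct verification of \eqref{frec} from Laplace's explicit formula \eqref{fn}; your computation with the absorption identity $j\binom{n}{j}=(n+1-j)\binom{n}{j-1}$ and Pascal's rule is correct, as are the reduction to $\rho\in(0,1)$ by continuity (for $n\ge1$) and the base case. The trade-off: the transform proof needs no closed form for $f_n$ and delivers all three equivalent theorems at one stroke, while yours is more elementary --- no appeal to uniqueness for Laplace transforms --- but leans on \eqref{fn}, which the paper itself only quotes without proof.
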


We first verify that, as claimed above, Theorems \ref{T1}, \ref{T2} and
\ref{T3} are 
equivalent; we then prove the three theorems.

\begin{proof}[Proof of \refT{T1}$\iff$\refT{T2}$\iff$\refT{T3}]
By replacing $\rho$ by $1-\rho$, \eqref{t1} can be written
\begin{equation}\label{t1-}
  \P\bigpar{\floor{S_n+1-\rho}=k}=\frac{A_{n,k,\rho}}{n!},
\qquad \rho\in\oi,\;k\in\bbZ.
\end{equation}

In \refT{T2}, 
the volume of $Q^n_{s}$ equals 
$\P\bigpar{s-1\le S_n < s}$. Taking $s=k+\rho$, we have
\begin{equation*}
  \begin{split}
k+\rho-1\le S_n< k+\rho
&\iff k\le S_n+1-\rho< k+1
\\&
\iff \floor{S_n+1-\rho}=k,	
  \end{split}
\end{equation*}
and thus \refT{T2} is equivalent to \eqref{t1-}.

Similarly, 
  by \eqref{sw},
at least when $n\ge1$, 
  \begin{equation}\label{swa}
f_{n+1}(k+\rho)=\P(k+\rho-1< S_n \le k+\rho) 	
=\P(\floor{S_n+1-\rho}=k),
  \end{equation}
so \eqref{t3} is equivalent to \eqref{t1-}.
Hence all three theorem are equivalent to \eqref{t1-}.
(The trivial and partly exceptional case $n=0$ of \refT{T3} can be
verified directly.) 
\end{proof}

\begin{proof}[Proof of Theorems \ref{T1}, \ref{T2}, \ref{T3}]
It suffices to prove one of the theorems; we chose the version \eqref{t3x}.
We do this by calculating the Laplace transform of both sides, showing that
they are equal. (Note that both sides vanish for $x<0$.)
This implies that the two sides are equal \aex, and since both sides are
continuous on each interval $[k,k+1)$, they are equal for every $x$.
(In the trivial case $n=0$, we exclude $x=0,1$ as said above.)
Alternatively, for $n\ge1$, we can see directly that both sides of
\eqref{t3x} are continuous on $\bbR$,
using \eqref{a01} for the \rhs.

The Laplace transform of $f_{n+1}(x)$ is
\begin{equation}
  \begin{split}
\intoo f_{n+1}(x)e^{-sx}\dd x
&=\E e^{-s S_{n+1}}
=\bigpar{\E e^{-s U_1}}^{n+1}
=\lrpar{\intoi e^{-sx}\dd x}^{n+1}
\\&
=\parfrac{1-e^{-s}}{s}^{n+1}.
  \end{split}
\end{equation}

For $A_{n,\floor{x},\frax x}$ we obtain, using \eqref{efa} and
\eqref{ef},
\begin{equation}
  \begin{split}
\intoo & A_{n,\floor{x},\frax x}  e^{-sx}\dd x
=\sumk \intoi A_{n,k,\rho}e^{-s(k+\rho)}\dd\rho
\\&
= \intoi e^{-s\rho}\sumk A_{n,k,\rho}e^{-sk}\dd\rho
= \intoi e^{-s\rho} P_{n,\rho}(e^{-s})\dd\rho
\\&
= \intoi e^{-s\rho} \bigpar{1-e^{-s}}^{n+1}\sumj (j+\rho)^ne^{-js} \dd\rho
\\&
=  \bigpar{1-e^{-s}}^{n+1}\intoo x^ne^{-sx} \dd x
= n! \parfrac{1-e^{-s}}{s}^{n+1}.
  \end{split}
\end{equation}
hence the two Laplace transforms are equal, which completes the proof.
\end{proof}

An alternative  proof is by induction in $n$, using
the derivative of \eqref{sw} and \eqref{kem}. We leave this to the reader.

\begin{remark}
By \eqref{t3}, the basic recursion \eqref{efarec} is equivalent to
the recursion formula
\begin{equation}\label{frec}
  f_{n+1}(x)=\frac1n\bigpar{xf_n(x)+(n+1-x)f_n(x-1)}
\end{equation}
for the density functions $f_n$.
This formula is well-known in spline theory, see
 \cite[(4.52)--(4.53)]{Schumaker}. 
Conversely, \eqref{frec} implies \eqref{t3}, and thus also Theorems
\ref{T1} and \ref{T2}, by induction.
\end{remark}

\begin{remark}\label{Rfun}
  By \eqref{t3} and \eqref{fn}, for $n\ge1$ and $\rho\in\oi$,
\begin{equation}\label{fun}
A_{n,k,\rho}
= \sum_{j=0}^{n+1} (-1)^j \binom {n+1}j (k+\rho-j)_+^{n}
= \sum_{j=0}^{k} (-1)^j \binom {n+1}j (k+\rho-j)^{n}.
\end{equation}
This is another well-known formula, at least for the Eulerian case $\rho=1$.
It has been used to extend the definition of the \EFx numbers
to arbitrary real $n$ by \cite{ButzerH} (Eulerian numbers, $\rho=1$)
and \cite{LesieurN}
(note that $A(x,n)$ in \cite{LesieurN} equals our $A_{n,\floor{x},\frax{x}}$).
\end{remark}

We extend the definition \eqref{efdist} of $\EUrho$ for $\rho\in\oi$ to
arbitrary real $\rho$ by defining, for any $\rho\in\bbR$,
\begin{equation}
  \label{efreal}
\EUrho\=\EU_{n,\frax\rho}-\floor\rho.
\end{equation}
As above, we use $\EUrho$ also to denote the distribution of this random
variable. 
Since $\EU_{n,0}\eqd\EU_{n,1}+1$, and we only are interested in the
distribution of $\EUrho$, \eqref{efreal} is consistent with our previous
definition 
\eqref{efdist} for all $\rho\in\oi$. Note,  however, that \eqref{efdist}
holds only for $\rho\in\oi$. (See also \refR{R1}.)

This rather trivial extension is sometimes convenient. Theorems \ref{T1} and
\ref{T3} extend immediately:

\begin{theorem}
  \label{T4}
For any real $\rho$ and $n\ge1$,
\begin{equation}
  \EUrho\eqd\floor{S_n+1-\rho}
\end{equation}
and, for any $k\in\bbZ$,
\begin{equation}\label{t4b}
  \P(\EUrho=k)=\P(\floor{S_n+1-\rho}=k)
=A_{n,k+\floor \rho,\frax \rho}/n! = f_{n+1}(k+\rho)
\end{equation}
and
\begin{equation}\label{t4c}
  \P(\EUrho\le k)=\P(\floor{S_n+1-\rho}\le k)
=F_n( k+\rho).
\end{equation}
\end{theorem}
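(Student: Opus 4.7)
The plan is to deduce \refT{T4} directly from the definition \eqref{efreal} together with \refT{T1} and \refT{T3}; the entire argument is bookkeeping with floors and fractional parts of $\rho$, and no new idea is required.

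Writing $\rho=\floor{\rho}+\frax{\rho}$ with $\frax{\rho}\in\oi$, the definition \eqref{efreal} reads $\EUrho=\EU_{n,\frax{\rho}}-\floor{\rho}$. Since $\frax{\rho}\in\oi$, \refT{T1} applies (after replacing the parameter $\rho$ there by $1-\frax{\rho}$) and gives $\EU_{n,\frax{\rho}}\eqd\floor{S_n+1-\frax{\rho}}$. The integer $\floor{\rho}$ can be absorbed inside the floor, so
\begin{equation*}
\EUrho\eqd\floor{S_n+1-\frax{\rho}}-\floor{\rho}=\floor{S_n+1-\rho},
\end{equation*}
which is the first assertion. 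The boundary case $\frax{\rho}=0$ is covered by the endpoint $\rho=1$ of \refT{T1} and is consistent with the identity $\EU_{n,0}\eqd\EU_{n,1}+1$ recorded after \eqref{efarec}.

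The mass-function identity \eqref{t4b} then follows by rewriting
\begin{equation*}
\P(\EUrho=k)=\P\bigpar{\EU_{n,\frax{\rho}}=k+\floor{\rho}}=A_{n,k+\floor{\rho},\frax{\rho}}/n!
\end{equation*}
via \eqref{efdist} (using the convention $A_{n,j,\rho}=0$ for $j\notin\set{0,\dots,n}$, which accommodates arbitrary integer $k$), and then invoking \refT{T3} in the form \eqref{t3} to recast the last expression as $f_{n+1}\bigpar{k+\floor{\rho}+\frax{\rho}}=f_{n+1}(k+\rho)$; the equality with $\P(\floor{S_n+1-\rho}=k)$ is just the distributional identity already proved. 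For the CDF formula \eqref{t4c}, I would observe that $\floor{S_n+1-\rho}\le k$ is equivalent to $S_n<k+\rho$, and since $S_n$ has a continuous distribution for $n\ge1$ this probability equals $\P(S_n\le k+\rho)=F_n(k+\rho)$.

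The only mildly delicate point is the sanity check at $\frax{\rho}=0$, which uses the $\rho=1$ endpoint of \refT{T1}; otherwise the argument is entirely mechanical, so I do not anticipate any real obstacle.
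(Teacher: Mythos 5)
Your proposal is correct and follows essentially the same route as the paper: apply the definition \eqref{efreal}, invoke \refT{T1} with parameter $1-\frax{\rho}$, absorb the integer $\floor{\rho}$ into the floor, and then obtain \eqref{t4b} from \eqref{efdist} and \refT{T3}. Your explicit justification of \eqref{t4c} via the equivalence $\floor{S_n+1-\rho}\le k\iff S_n<k+\rho$ and the continuity of the distribution of $S_n$ merely spells out what the paper leaves as ``which also implies \eqref{t4c}''.
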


\begin{proof}
  By \eqref{efreal} and \refT{T1}, 
  \begin{equation*}
	\EUrho
\=
\EU_{n,\frax\rho}-\floor{\rho}\eqd\floor{S_n+1-\frax\rho}-\floor\rho
=\floor{S_n+1-\rho},
  \end{equation*}
which also implies \eqref{t4c}.
Moreover, by \eqref{efreal}, \eqref{efdist} and \eqref{t3x},
\begin{equation*}
  \P(\EUrho=k)=\P\bigpar{\EU_{n,\frax\rho}=k+\floor\rho}
=A_{n,k+\floor \rho,\frax \rho}/n! = f_{n+1}(k+\rho).
\qedhere
  \end{equation*}
\end{proof}

\section{Rounding}\label{Srrr}

Let $\rho\in\oi$ and define \emph{\rr} of real numbers by rounding a number $x$
down (to the nearest integer) if its fractional part $\frax x$ is less that
$\rho$, and up (to the nearest integer) otherwise.
We denote \rr{} by $\rrr{x}$, and can state the definition as
\begin{equation}
  \rrr x=n \iff n-1+\rho \le x <n+\rho,
\end{equation}
or, equivalently,
\begin{equation}\label{rrr}
  \rrr x = \floor{x+1-\rho}.
\end{equation}

In particular, $\floor{x}_1=\floor{x}$ (rounding down),
$\floor{x}_0=\ceil{x}$ (rounding up), except when $x$ is an integer,
and $\rrx{1/2} x$ is standard rounding (except perhaps when $\frax{x}=1/2$).

\begin{remark}
  As seen from these examples, in the case $\frax x=\rho$, the definition
  made above (for definiteness) is not obviously the best choice. Often
it is better to leave this case ambiguous, allowing rounding both up and
down.
However, we will  be interested in roundings of continuous random variables,
and then this exceptional case has  probability 0 and may be ignored.
\end{remark}

We define $\rrr x$ by \eqref{rrr} for arbitrary $\rho\in\bbR$.
This will be convenient later, although it is strictly speaking not  a
``rounding'' 
when $\rho\notin\oi$.

\begin{example}\label{Eval}
  One use of \rr{} is in the study of election methods; more precisely
  methods for proportional elections using party lists.
(In the United States, such methods are used, under different names,
for apportionment of the seats in  the House of Representatives among the
states.) 
Several important such methods are \emph{divisor methods}, and most of them 
can be
described as giving a party with $v$ votes $\rrr{v/D}$ seats, where $\rho$
is a given number and the divisor $D$ is chosen such that the total number
of seats is a predetermined number (the house size). The main examples are
$\rho=0$ (\emph{d'Hondt's method} = \emph{Jefferson's method})
and 
$\rho=1/2$ (\emph{Sainte-Lagu\"e's method} = \emph{Webster's method}).
Some other important  proportional election methods  are 
\emph{quota methods}, which again 
can be described as giving a party with $v$ votes $\rrr{v/D}$ seats, 
where now $D$ (in this setting called the \emph{quota})
is given by some formula and $\rho$ is
chosen such that the total number of seats is the house size. 
The most important example is to take $D$ as the \emph{simple quota} (also
called \emph{Hare quota}), \ie, the average number of votes per
seat 
(\emph{the method of greatest remainder} = \emph{Hare's method} =
\emph{Hamilton's method}). 
We return to election methods in Sections \refand{Sval}{Share}.
See further \cite[Appendices A and B]{SJ262} and \cite{BY}, 
\cite{Kopfermann},
\cite{Pukelsheim}.
(In the study of election methods, usually $\rho\in\oi$, but occasionally
other values of $\rho$ are used, see \cite{SJ262}.)
\end{example}

By \eqref{rrr}, yet another formulation of \refT{T1} is the following.
\begin{theorem}\label{Trr}
For every $\rho\in\bbR$ and $n\ge1$, the random
variable $\rrr{S_n}$ has the \EFx distribution
$\EUx{\rho}$.
In particular, if 
$\rho\in\oi$, then
\begin{equation}\label{trr}
  \P\bigpar{\floor{S_n}_\rho=k}
=\P(\EUrho=k)
=\frac{A_{n,k,\rho}}{n!},
\qquad k\in\bbZ,
\end{equation}
and more generally, for any real $\rho$,
\begin{equation}\label{trrr}
  \P\bigpar{\floor{S_n}_\rho=k}
=\P(\EUrho=k)
=\frac{A_{n,k+\floor\rho,\frax\rho}}{n!},
\qquad k\in\bbZ.
\end{equation}
\end{theorem}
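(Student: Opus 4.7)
The plan is to observe that this theorem is essentially a reformulation of \refT{T4} through the $\rho$-rounding notation, and that no new analytic work is needed — only a careful bookkeeping of the definitions.

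First I would invoke the definition \eqref{rrr} of $\rho$-rounding directly with $x=S_n$, giving the identity $\rrr{S_n}=\floor{S_n+1-\rho}$ for every real $\rho$. This identity holds pointwise on the sample space, so it is in particular an identity in distribution. By \refT{T4} the random variable $\floor{S_n+1-\rho}$ has the same distribution as $\EUrho$, and combining the two identities yields the first assertion of the theorem, namely $\rrr{S_n}\eqd\EUrho$, for arbitrary real $\rho$ and $n\ge 1$.

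For the explicit formulas I would split into cases. When $\rho\in\oi$, the value $\P(\EUrho=k)=A_{n,k,\rho}/n!$ is immediate from the original definition \eqref{efdist} of the \EFx distribution, which together with $\rrr{S_n}\eqd\EUrho$ gives \eqref{trr}. For the general case $\rho\in\bbR$, I would apply the extension \eqref{efreal} and its consequence \eqref{t4b}, which already states that $\P(\EUrho=k)=A_{n,k+\floor\rho,\frax\rho}/n!$; combining this with $\rrr{S_n}\eqd\EUrho$ gives \eqref{trrr}.

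There is essentially no obstacle here beyond bookkeeping: the substantive content (connecting the distribution of $S_n$ to \EFx numbers via a shift by $1-\rho$) has already been established in Theorems \ref{T1} and \ref{T4}, and the present statement merely repackages it through the rounding operator $\rrr{\cdot}$. The only minor point to verify is that the exceptional case $\frax{S_n}=\rho$ (where the rounding convention in \eqref{rrr} matters) has probability zero, since $S_n$ for $n\ge 1$ has a continuous distribution by \eqref{fn}; this ensures that no probability is misallocated by the asymmetric convention in \eqref{rrr}.
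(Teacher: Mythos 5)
Your proposal is correct and follows essentially the same route as the paper: the pointwise identity $\rrr{S_n}=\floor{S_n+1-\rho}$ from \eqref{rrr} combined with \refT{T4}, then \eqref{efdist} for $\rho\in\oi$ and \eqref{efreal} (via \eqref{t4b}) for general $\rho$. The closing remark about the null event $\frax{S_n}=\rho$ is harmless but unnecessary, since \eqref{rrr} is an exact pointwise definition and no distributional ambiguity arises.
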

\begin{proof}
The first claim is
 immediate from \eqref{rrr} and \refT{T4}.
This yields \eqref{trr} by \eqref{efdist} and then \eqref{trrr} by
\eqref{efreal}. 
\end{proof}

In other words, defining $\znrho\=\rrr{S_n}$, we have
\begin{equation}\label{znrho}
\znrho\=\rrr{S_n}=\floor{S_n+1-\rho}\sim\EUrho.
\end{equation}
In particular, when $\rho\in\oi$, $\znrho$ has the \pgf{} \eqref{pgf}.

\citet{SJ175} studied roundings using the notation, for $\ga\in\bbR$,
\begin{equation}
 X_\ga\=\floor{X+\ga}-\ga+1. 
\end{equation}
Comparing with \eqref{znrho}, we see that in this notation,
\begin{equation}\label{znrho175}
  \znrho=(S_n)_{1-\rho}-\rho.
\end{equation}

We state a corollary of \refT{Trr}
for standard rounding (\ie, $\floor{x}_{1/2}$), which
again shows 
the special importance of the cases $\rho=0,\frac12,1$ of the \EFx
numbers. 

\begin{corollary}
  \label{C1}
Let $\tU_1,\dots,\tU_n$ be independent random variables uniformly distributed on
$[-\frac12,\frac12]$, and let $\tS_n\=\sumiin \tU_i$, with
$n\ge1$.
Then,
$\floor{\tS_n}_{1/2}$ has the distribution
$\EU_{n,(n+1)/2}$, and thus, for $k\in\bbZ$,
\begin{equation*}
  \P\bigpar{\floor{\tS_n}_{1/2}=k}=
  \begin{cases}
\xfrac{A_{n,k+n/2,1/2}}{n!},	& n \text{ even},
\\
\xfrac{A_{n,k+(n+1)/2,0}}{n!}=
\xfrac{A_{n,k+(n-1)/2,1}}{n!},	& n \text{ odd}.
  \end{cases}
\end{equation*}
\end{corollary}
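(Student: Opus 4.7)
The plan is to reduce the symmetric-rounding statement for $\tilde S_n$ directly to \refT{Trr} (equivalently, \refT{T4}) by a trivial shift, and then translate the result using the extension \eqref{efreal} together with the identity $A_{n,k+1,0}=A_{n,k,1}$.

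First I would write $\tilde U_i=U_i-\tfrac12$, so that $\tilde S_n=S_n-n/2$. Using the definition \eqref{rrr} of \rr, this gives
\begin{equation*}
\floor{\tS_n}_{1/2}=\floor{\tS_n+\tfrac12}=\floor{S_n+\tfrac{1-n}{2}}
=\floor{S_n+1-\tfrac{n+1}{2}}.
\end{equation*}
By \refT{T4} (with $\rho=(n+1)/2$, which is permitted since that theorem was established for arbitrary real $\rho$), the \rhs{} has the distribution $\EUx{(n+1)/2}$. This identifies the \EFx distribution in the statement.

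Next I would unpack this using \eqref{efreal}. When $n$ is even, $(n+1)/2=n/2+1/2$, so $\floor{(n+1)/2}=n/2$ and $\frax{(n+1)/2}=1/2$; hence by \eqref{efreal} and \eqref{efdist},
\begin{equation*}
\P\bigpar{\EUx{(n+1)/2}=k}=\P\bigpar{\EUx{1/2}=k+n/2}=\frac{A_{n,k+n/2,1/2}}{n!}.
\end{equation*}
When $n$ is odd, $(n+1)/2$ is an integer, so $\floor{(n+1)/2}=(n+1)/2$ and $\frax{(n+1)/2}=0$; hence
\begin{equation*}
\P\bigpar{\EUx{(n+1)/2}=k}=\P\bigpar{\EUx{0}=k+(n+1)/2}=\frac{A_{n,k+(n+1)/2,0}}{n!},
\end{equation*}
and the identity $A_{n,k+1,0}=A_{n,k,1}$ (noted just before \eqref{t1-}) rewrites this as $A_{n,k+(n-1)/2,1}/n!$.

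There is no real obstacle here: the substance is entirely contained in \refT{Trr}/\refT{T4}, and the remaining work is merely keeping track of the integer and fractional parts of $(n+1)/2$ according to the parity of $n$. The only thing one has to be a little careful about is making sure the shift identity $\tS_n=S_n-n/2$ and the definition \eqref{rrr} combine correctly to produce the argument $1-\rho=(1-n)/2$, which is why $\rho=(n+1)/2$ appears.
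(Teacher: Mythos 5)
Your proof is correct and follows essentially the same route as the paper: shift $\tU_i=U_i-\frac12$, rewrite $\floor{\tS_n}_{1/2}$ as $\floor{S_n+1-\frac{n+1}2}=\floor{S_n}_{(n+1)/2}$, invoke \refT{T4}/\refT{Trr}, and unpack the integer and fractional parts of $(n+1)/2$ by parity via \eqref{trrr} and \eqref{a01}. No issues.
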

\begin{proof}
  We can take $\tU_i\=U_i-\frac12$, and then, using \eqref{znrho},
  \begin{equation*}
	\floor{\tS_n}_{1/2} = \Bigfloor{\tS_n+\frac12}
=\Bigfloor{S_n-\frac{n-1}2}
=\bigfloor{S_n}_{(n+1)/2}\eqd \EU_{n,(n+1)/2}.
  \end{equation*}
The result follows by \eqref{trrr}.
\end{proof}

\section{Characteristic function and moments}\label{Schf}

We use the results in \refS{Srrr} to derive further results for
the \EFx distribution $\EUrho$. 
As said in the introduction, we 
also use $\EUrho$ to denote a random variable with this distribution.
Since $\EUrho\eqd\znrho$ by \eqref{znrho}, we can just as well consider
$\znrho\=\rrr{S_n}$.

We begin with an expression for the \chf{} and \mgf{} of 
the \EFx distribution $\EUrho$.
(Cf.\ \cite[Lemma 2.4]{GawronskiN} where an equivalent formula is given.)
We denote the \chf{} of a random variable $X$ by $\gf_X$.
Note that if $\rho\in\oi$ 
(and the general case can be reduced to this by \eqref{efreal}), 
we have by \eqref{pgf}
\begin{equation}\label{efchf0}
	\gf_{\EUrho}(t)
\= \E e^{\ii t\EUrho}
=\frac{\pnrho(\eit)}{n!}
  \end{equation}
and, more generally,
for all $t\in\bbC$,
the \mgf{} 
  \begin{equation}\label{efmgf0}
\E e^{t\EUrho}
=\frac{\pnrho(e^t)}{n!}.
  \end{equation}

\begin{theorem}
Let $n\ge1$ and $\rho\in\bbR$.
  The \chf{} of $\EUrho$ is given by
  \begin{equation}\label{efchf}
	\gf_{\EUrho}(t)
=\ii^{-n-1}e^{-\ii\rho t}\bigpar{\eit-1}^{n+1}\sum_{k=-\infty}^\infty 
\frac{ e^{-2\pi\ii k\rho}}{(t+2\pi k)^{n+1}}.
  \end{equation}
Equivalently, the \mgf{} is, for all $t\in\bbC$,
  \begin{equation}\label{efmgf}
\E e^{t\EUrho}
=e^{-\rho t}\bigpar{e^t-1}^{n+1}\sum_{k=-\infty}^\infty 
\frac{ e^{-2\pi\ii k\rho}}{(t+2\pi k\ii)^{n+1}}.
  \end{equation}
(For $t\in 2\pi\bbZ$ or $t\in2\pi\ii\bbZ$, respectively,
the expressions are interpreted by continuity.) 
\end{theorem}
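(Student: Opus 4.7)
The plan is to treat the \mgf{} formula \eqref{efmgf} as the main identity and derive the \chf{} formula \eqref{efchf} from it by the substitution $t\mapsto\ii t$, collecting a factor $\ii^{-n-1}$. By the extension argument noted after \eqref{efreal}, it suffices to prove \eqref{efmgf} for $\rho\in\oi$; indeed if the formula holds for $\frax\rho$, then it holds for $\rho$ since $\EUrho\eqd\EU_{n,\frax\rho}-\floor\rho$ contributes $e^{-t\floor\rho}$ on the left, while on the right $e^{-\rho t}=e^{-t\floor\rho}e^{-t\frax\rho}$ and $e^{-2\pi\ii k\rho}=e^{-2\pi\ii k\frax\rho}$ (as $k\floor\rho\in\bbZ$).

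The core of the argument is Poisson summation applied to the density $f_{n+1}$ of $S_{n+1}$. By \refT{T4}, for any real $t$
\[
\E e^{t\EUrho}=\sum_{k\in\bbZ} e^{tk}f_{n+1}(k+\rho)
=e^{-\rho t}\sum_{k\in\bbZ} g(k+\rho),
\qquad g(x)\=e^{tx}f_{n+1}(x).
\]
Since $f_{n+1}$ is a compactly supported spline (of class $C^{n-1}$ on $\bbR$, supported on $[0,n+1]$), both $g$ and its Fourier transform are rapidly decaying; in particular for $n\ge1$ the Poisson summation formula gives
\[
\sum_{k\in\bbZ} g(k+\rho)=\sum_{k\in\bbZ}\hat g(k)\,e^{2\pi\ii k\rho},
\]
where $\hat g(\xi)\=\int_{\bbR} g(x)e^{-2\pi\ii\xi x}\dd x$. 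The Fourier transform is then a direct computation using that $f_{n+1}$ is the density of $S_{n+1}$:
\[
\hat g(\xi)=\E e^{(t-2\pi\ii\xi)S_{n+1}}
=\biggpar{\frac{e^{t-2\pi\ii\xi}-1}{t-2\pi\ii\xi}}^{n+1},
\]
and at $\xi=k\in\bbZ$ this collapses, since $e^{-2\pi\ii k}=1$, to
\[
\hat g(k)=\frac{(e^t-1)^{n+1}}{(t-2\pi\ii k)^{n+1}}.
\]
Substituting this into the Poisson identity and relabelling $k\mapsto -k$ in the summation index yields \eqref{efmgf} for $t\in\bbR$.

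Finally, to obtain \eqref{efmgf} for all $t\in\bbC$, observe that both sides are entire functions of $t$: the left side is a finite sum of exponentials, while on the right the series converges absolutely for $n\ge1$ because $|t+2\pi k\ii|^{-(n+1)}=O(|k|^{-(n+1)})$, and the apparent poles at $t=-2\pi k\ii$ are removable since the prefactor $(e^t-1)^{n+1}$ vanishes to order $n+1$ at each such point. Analytic continuation then upgrades the real-$t$ identity to all of $\bbC$, and the \chf{} formula \eqref{efchf} follows from \eqref{efmgf} by the substitution $t\mapsto\ii t$.

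The main technical obstacle is the clean justification of Poisson summation and the absolute convergence of the lattice sum; this is harmless here because $f_{n+1}$ is a $B$-spline of explicit smoothness, but one must check that the product $e^{tx}f_{n+1}(x)$ retains enough decay in the Fourier variable (which it does, being a compactly supported $C^{n-1}$ function for $n\ge1$, so that $\hat g(\xi)=O(|\xi|^{-(n+1)})$, summable over $\bbZ$).
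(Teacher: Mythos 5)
Your proof is correct, and the underlying mechanism is the same as the paper's --- Poisson summation applied to the lattice of values of the B-spline density $f_{n+1}$ --- but you package it differently and more self-containedly. The paper proves the \chf{} version \eqref{efchf} first, by representing $\EUrho+\rho\eqd(S_n)_{1-\rho}$ and invoking the general formula of \cite[Theorem 2.1]{SJ175} for the \chf{} of a rounded random variable (which is itself a Poisson-summation statement), so that the factor $\gf_U(t+2\pi k)\gf_{S_n}(t+2\pi k)$ collapses to the $(n+1)$st power; it then passes to \eqref{efmgf} by the substitution $t\mapsto t/\ii$ after the same entire-function/analytic-continuation argument you use. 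You instead start from the \mgf, use \refT{T4} to write $\E e^{t\EUrho}$ as $e^{-\rho t}\sum_k g(k+\rho)$ with $g(x)=e^{tx}f_{n+1}(x)$, and carry out Poisson summation by hand, computing $\hat g(k)$ as the \mgf{} of $S_{n+1}$ at $t-2\pi\ii k$. What your route buys is independence from the external reference, at the price of having to justify the pointwise validity of Poisson summation --- which you do correctly: $g$ is continuous and compactly supported for $n\ge1$ and $\hat g(\xi)=O(|\xi|^{-(n+1)})$ with $n+1\ge2$, so the periodized Fourier series converges absolutely to the (continuous) periodization. Your reduction to $\rho\in\oi$ at the start is harmless but actually unnecessary, since \refT{T4} already covers all real $\rho$. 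The remaining steps (removability of the singularities at $t\in2\pi\ii\bbZ$ thanks to the $(e^t-1)^{n+1}$ prefactor, analytic continuation from $\bbR$ to $\bbC$, and the substitution $t\mapsto\ii t$ producing the factor $\ii^{-n-1}$) match the paper exactly.
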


\begin{proof}
This can be reduced to the case
$\rho\in\oi$, and then \eqref{efchf} is \eqref{efchf0}
together with a special case  of an expansion found by 
\citet[(4)  and (5)]{Lerch} (take $s=-n$ there); 
nevertheless, we give a probabilistic proof (valid for all $\rho$)
using a general formula for rounded stochastic variables  in 
\cite{SJ175}.

Since $U_i$ has the \chf{} 
\begin{equation}
  \gf_{U}(t)\=\E e^{\ii tU}=\frac{\eit-1}{\ii t},
\end{equation}
the sum $S_n$ has the \chf{}
\begin{equation}\label{chfsn}
  \gf_{S_n}(t)=\gf_U(t)^n = \Bigparfrac{\eit-1}{\ii t}^n.
\end{equation}
The formula in \cite[Theorem 2.1]{SJ175} now yields
\begin{equation*}
  \begin{split}
  \E e^{\ii t (S_n)_{1-\rho}}
&=\sum_{k=-\infty}^\infty 
 e^{2\pi\ii k(1-\rho)}\gf_U(t+2\pi k)\gf_{S_n}(t+2\pi k)
\\&
=\sum_{k=-\infty}^\infty 
 e^{2\pi\ii k(1-\rho)}\parfrac{\eit-1}{\ii(t+2\pi k)}^{n+1},	
  \end{split}
\end{equation*}
which yields \eqref{efchf} by \eqref{znrho} and \eqref{znrho175}.

This derivation tacitly assumes that $t$ is real, but the sum in
\eqref{efchf} converges for every complex $t\in\bbC\setminus2\pi\bbZ$, and
defines a meromorphic function with poles in $2\pi\bbZ$; thus the \rhs{} of
\eqref{efchf} is an entire function of $t\in\bbC$. So is also the \lhs,
since it is a trigonometric polynomial. Hence \eqref{efchf} is valid for all
complex $t$, and replacing $t$ by $t/\ii$, we obtain \eqref{efmgf}.
\end{proof}

Moments of arbitrary order can be obtained from the \mgf{} by
differentiation. For moments of order at most $n$, this leads to simple
results. 

\begin{lemma}
  \label{Lmom}
Let $n\ge1$ and $\rho\in\bbR$.
The random variables $\EUrho+\rho$ and $S_{n+1}$ have the same moments up to
order $n$:
\begin{equation}
  \E\lrpar{\EUrho+\rho}^m = \E S_{n+1}^m,
\qquad 1\le m\le n.
\end{equation}
\end{lemma}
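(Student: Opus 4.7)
The plan is to read off both moments from the moment generating function formula \eqref{efmgf} after shifting by $\rho$. Multiplying \eqref{efmgf} by $e^{\rho t}$ gives
\begin{equation*}
\E e^{t(\EUrho+\rho)} = \bigpar{e^t-1}^{n+1}\sum_{k=-\infty}^\infty \frac{e^{-2\pi\ii k\rho}}{(t+2\pi k\ii)^{n+1}}.
\end{equation*}
I would isolate the $k=0$ summand, which equals $\bigpar{(e^t-1)/t}^{n+1}$. By \eqref{chfsn} (or by direct computation with $\E e^{tU}=(e^t-1)/t$), this is precisely $\E e^{tS_{n+1}}$.

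Next I would argue that the remaining terms, namely
\begin{equation*}
\bigpar{e^t-1}^{n+1}\sum_{k\neq 0}\frac{e^{-2\pi\ii k\rho}}{(t+2\pi k\ii)^{n+1}},
\end{equation*}
contribute nothing to the Taylor coefficients at $t=0$ up to order $n$. The factor $(e^t-1)^{n+1}$ has a zero of order exactly $n+1$ at $t=0$, while the sum over $k\neq 0$ is holomorphic in a neighborhood of the origin (its only poles are at $t=-2\pi k\ii$ with $k\neq 0$, and the series converges uniformly on compact subsets away from these poles since the summands are $O(|k|^{-n-1})$). Hence the product is $O(t^{n+1})$ near $t=0$. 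Consequently the Taylor expansions of $\E e^{t(\EUrho+\rho)}$ and $\E e^{tS_{n+1}}$ agree through order $t^n$, which by comparing coefficients $t^m/m!$ gives $\E(\EUrho+\rho)^m = \E S_{n+1}^m$ for $1\le m\le n$.

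The main (very mild) obstacle is justifying the analytic statement that the ``$k\neq 0$ tail'' is holomorphic at $0$ and vanishes to order $n+1$; this is handled by the uniform convergence of $\sum_{k\neq0}(t+2\pi k\ii)^{-n-1}$ on a disk of radius less than $2\pi$ around the origin, together with the factorization $e^t-1 = t\cdot(1+O(t))$. Once this is in place, the identity of moments is immediate from matching Taylor coefficients, and the restriction $m\le n$ is seen to be exactly the order to which the $k\neq 0$ contributions remain invisible.
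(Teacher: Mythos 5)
Your argument is correct and coincides with the proof the paper itself gives: the paper's proof of Lemma~\ref{Lmom} cites an external rounding theorem but immediately notes, as an ``alternative and equivalent'' argument, exactly your observation that in \eqref{efmgf} the terms with $k\neq0$ contribute nothing to the first $n$ derivatives at $t=0$ because the factor $(e^t-1)^{n+1}$ vanishes there to order $n+1$. You have simply supplied the (easy) analytic justification---local holomorphy and uniform convergence of the $k\neq0$ tail near the origin---in more detail than the paper does.
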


\begin{proof}
  Since $\EUrho+\rho\eqd\znrho+\rho=(S_n)_{1-\rho}$ by 
\eqref{znrho175}, this follows by \cite[Theorem 2.3]{SJ175}, noting that
\begin{equation}
  \tilde\gf(t)\=\frac{\eit-1}{\ii t}\gf_{S_n}(t)=\parfrac{\eit-1}{\ii t}^{n+1}
\end{equation}
has its $n$ first derivatives $0$ at $t=2\pi n$, $n\in\bbZ\setminus\set0$.
(Alternatively and equivalently, this follows from \eqref{efmgf}, noting
that the terms with $k\neq0$ give no contribution to the $m$:th derivative
at $t=0$ for $m\le n$, since the factor $(e^t-1)^{n+1}$ vanishes to order
$n+1$ there.)
\end{proof}

This leads to the following results, shown by
\citet[Lemmas 4.1 and 4.2]{GawronskiN} by related but more complicated
calculations. 
(In the classical case $\rho=1$, the cumulants were given already by 
\citet[p.~153]{DavidB}.) 

\begin{theorem}\label{Tmom}
  For any $\rho\in\bbR$,
  \begin{equation}\label{tmom1}
\E\EUrho = \frac{n+1}2-\rho, \qquad n\ge1,
  \end{equation}
and
  \begin{equation}\label{tmom2}
\Var\EUrho = \frac{n+1}{12}, \qquad n\ge2.
  \end{equation}
More generally, for $2\le m\le n$, the $m$:th cumulant $\kk_m(\EUrho)$
is independent of $\rho$, and is given by
  \begin{equation}\label{tmomkk}
\kk_m(\EUrho) = 
\kk_m(S_{n+1})
=(n+1)\kk_m(U)=(n+1)\frac{B_m}m,
\qquad 2\le m\le n,
  \end{equation}
where $B_m$ is the $m$:th Bernoulli number.
In particular, $\kk_m(\EUrho)=0$ if $m$ is odd with $3\le m\le n$.
\end{theorem}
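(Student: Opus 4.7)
The plan is to let \refL{Lmom} do the structural work, relying on two standard facts about cumulants: the $m$-th cumulant is a polynomial in the first $m$ moments, and cumulants of order $\ge 2$ are translation-invariant. Since \refL{Lmom} identifies the first $n$ moments of $\EUrho+\rho$ with those of $S_{n+1}$, these facts give
\[
\kk_m(\EUrho) = \kk_m(\EUrho+\rho) = \kk_m(S_{n+1}), \qquad 2\le m\le n,
\]
establishing at once the $\rho$-independence part of the statement.

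For the mean \eqref{tmom1} I would take $m=1$ in \refL{Lmom}, giving $\E\EUrho = \E S_{n+1}-\rho = (n+1)\E U-\rho = (n+1)/2-\rho$. For the higher cumulants, cumulant additivity over the independent sum $S_{n+1}=U_1+\dots+U_{n+1}$ yields $\kk_m(S_{n+1})=(n+1)\kk_m(U)$, so the whole theorem reduces to computing $\kk_m(U)$.

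This last step is pure calculation from the cumulant generating function $K_U(t)=\log((e^t-1)/t)$: differentiating gives $K_U'(t)=(1-e^{-t})^{-1}-t^{-1}$, and substituting $t\mapsto -t$ in the defining expansion $t/(e^t-1)=\sum_{m\ge 0} B_m t^m/m!$ rewrites this as $\tfrac12+\sum_{m\ge 2}(-1)^m B_m t^{m-1}/m!$. Reading off coefficients gives $\kk_m(U)=(-1)^m B_m/m$ for $m\ge 2$, which equals $B_m/m$ since $B_m=0$ for odd $m\ge 3$; this gives \eqref{tmomkk}, and specialization to $m=2$ with $B_2=1/6$ gives \eqref{tmom2}. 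The only place requiring any care is the sign tracking in the Bernoulli manipulation; the structural content of the theorem resides entirely in \refL{Lmom}.
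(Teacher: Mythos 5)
Your proposal is correct and takes essentially the same route as the paper's proof: \refL{Lmom} combined with the facts that the $m$:th cumulant is a polynomial in the first $m$ moments and that cumulants of order $\ge2$ are translation invariant, then additivity over the independent summands of $S_{n+1}$, and finally the computation $\kk_m(U)=B_m/m$ from the cumulant generating function. The only (immaterial) difference is that you extract $\kk_m(U)$ via the substitution $t\mapsto-t$ in the Bernoulli generating function, using $(-1)^mB_m=B_m$ for $m\ge2$, whereas the paper uses the identity $te^t/(e^t-1)=t+t/(e^t-1)$ directly.
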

For Bernoulli numbers, see \eg{} \cite[Section 6.5]{CM} and
\cite[\S24.2(i)]{NIST}. 

\begin{proof}
  For the mean we have by \refL{Lmom}, for $n\ge1$,
  \begin{equation}
\E\EUrho+\rho= \E S_{n+1} = (n+1)\E U = \frac{n+1}2,
  \end{equation}
which gives \eqref{tmom1}.

For higher moments, we note that the $m$:th cumulant $\kk_m$ can be
expressed as a polynomial in moments of order at most $m$; hence \refL{Lmom}
implies that $\kk_m(\EUrho+\rho)=\kk_m(S_{n+1})$ for $m\le n$.
Moreover, for any random variable (with $\E|X|^m<\infty$) and any real
number $a$, $\kk_m(X+a)=\kk_m(X)$, since $\kk_m(X+a)$ is the $m$:th
derivative at 0 of 
$\log \E e^{t(X+a)} = at + \log \E e^{tX}$. Hence,
\begin{equation}
  \kk_m(\EUrho)
=   \kk_m(\EUrho+\rho)
=\kk_m(S_{n+1}),
\qquad 2\le m\le n.
\end{equation}
Furthermore, since $S_{n+1}$ is the sum of the $n+1$ \iid{}
random variables $U_i$, $i\le n+1$, $\kk_m(S_{n+1})=(n+1)\kk_m(U)$.
Finally, 
the cumulants of the uniform distribution are
given by $\kk_m(U)=B_m/m$ for $m\ge2$; this, as is well-known, 
is shown by a simple calculation: (for $|t|<2\pi$;
see \cite[\S24.2.1]{NIST} for the last step)
\begin{equation*}
  \begin{split}
\summ m{\kk_m(U)}\frac{t^m}{m!}
&= 
t\ddq{t}\summ {\kk_m(U)}\frac{t^m}{m!}= 
t\ddq t \log \E e^{tU} =
t\ddq t \log \frac{e^t-1}{t}
\\&
=t\frac{e^t}{e^t-1}-1
=\frac{t}{e^t-1}+t-1
=\summ B_m\frac{t^m}{m!} +t-1.
  \end{split}
\end{equation*}
Combining these facts gives \eqref{tmomkk}. The special case $m=2$ yields
\eqref{tmom2}, since $\kk_2$ is the variance and $B_2=1/6$.
\end{proof}

We note also that \refT{T3} and
Fourier inversion for the distribution of $S_{n+1}$
yield the following integral formula.
(This is well-known in the settings of $S_n$,  and also for splines, see
\eg{} \cite[Theorem 3]{Schoenberg46}.) 
The case $\rho=0$ is given by \cite{Nicolas},
see also \cite{WangXuXu}.
(This integral formula is used in \cite{Nicolas} 
to define an extension $A(n,x)$ of Eulerian numbers to real $x$,
which by \eqref{tinta} equals the \EFx number $A_{n,\floor
  x,\frax x}$.
The formula is  further extended to real $n$ in \cite{LesieurN}; see also
\refR{Rfun}.)  
The special cases \eqref{tintb}--\eqref{tintc} are given by
\cite{GawronskiN}. 

\begin{theorem}
  \label{Tint}
If $n\ge1$, $k\in\bbZ$ and $\rho\in\oi$, then
\begin{equation}\label{tinta}
  \begin{split}
A_{n,k,\rho} 
&= \frac{n!}{\pi}\intoooo e^{\ii t(2k+2\rho-n-1)}
 \Bigparfrac{\sin t}{t}^{n+1}\,\dd t  
\\&
= n!\,\frac{2}{\pi}\intoo \cos\bigpar{t(2k+2\rho-n-1)}
 \Bigparfrac{\sin t}{t}^{n+1}\,\dd t  .	
  \end{split}
\end{equation}
In particular, for $k\ge1$, 
\begin{align}
  A_{2k-1,k,0} 
&= (2k-1)!\,\frac{2}{\pi}\intoo \Bigparfrac{\sin t}{t}^{2k}\,\dd t  ,
\label{tintb}
\\
  A_{2k,k,1/2} 
&= (2k)!\,\frac{2}{\pi}\intoo \Bigparfrac{\sin t}{t}^{2k+1}\,\dd t  .
\label{tintc}
\end{align}
\end{theorem}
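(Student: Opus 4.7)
The plan is to prove \eqref{tinta} by Fourier inversion, then deduce \eqref{tintb} and \eqref{tintc} by specializing parameters so that the cosine argument vanishes.

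First, by \refT{T3}, $A_{n,k,\rho} = n!\, f_{n+1}(k+\rho)$, so it suffices to obtain an integral representation for $f_{n+1}(x)$ at $x=k+\rho$. By \eqref{chfsn}, the \chf{} of $S_{n+1}$ is
\begin{equation*}
\gf_{S_{n+1}}(t) = \Bigparfrac{\eit-1}{\ii t}^{n+1}.
\end{equation*}
The identity $(\eit-1)/(\ii t) = e^{\ii t/2}\sin(t/2)/(t/2)$ then gives
\begin{equation*}
\gf_{S_{n+1}}(t) = e^{\ii (n+1) t/2}\Bigparfrac{\sin(t/2)}{t/2}^{n+1}.
\end{equation*}

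Since $n\ge1$, we have $|\gf_{S_{n+1}}(t)|\le C(1+|t|)^{-(n+1)}$ with $n+1\ge2$, so $\gf_{S_{n+1}}\in L^1(\bbR)$; moreover $f_{n+1}$ is continuous (as noted after \eqref{sw}). The Fourier inversion formula therefore yields, for every $x\in\bbR$,
\begin{equation*}
f_{n+1}(x) = \frac{1}{2\pi}\intoooo e^{-\ii t x}\gf_{S_{n+1}}(t)\dd t
= \frac{1}{2\pi}\intoooo e^{-\ii t (x-(n+1)/2)}\Bigparfrac{\sin(t/2)}{t/2}^{n+1}\dd t.
\end{equation*}
Substituting $u=t/2$ (so $\dd t=2\dd u$) and taking $x=k+\rho$,
\begin{equation*}
f_{n+1}(k+\rho) = \frac{1}{\pi}\intoooo e^{-\ii u(2k+2\rho-n-1)}\Bigparfrac{\sin u}{u}^{n+1}\dd u.
\end{equation*}
Because $(\sin u/u)^{n+1}$ is even in $u$, the exponential may be replaced by its real part, and the integral folded over $\ooo$. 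Multiplying by $n!$ gives both equalities in \eqref{tinta}.

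Finally, \eqref{tintb} follows by setting $n=2k-1$ and $\rho=0$ in \eqref{tinta}, since then $2k+2\rho-n-1 = 0$ so the cosine factor equals $1$ and $n+1=2k$; similarly \eqref{tintc} follows from $n=2k$, $\rho=1/2$, which again makes $2k+2\rho-n-1=0$ and leaves $(\sin t/t)^{2k+1}$. There is no real obstacle here: the only point requiring care is justifying pointwise Fourier inversion, which is immediate from the integrability of $\gf_{S_{n+1}}$ and continuity of $f_{n+1}$ for $n\ge1$.
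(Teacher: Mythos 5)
Your proposal is correct and follows essentially the same route as the paper: Theorem~\ref{T3} to reduce to $f_{n+1}(k+\rho)$, Fourier inversion with the characteristic function \eqref{chfsn}, the substitution $t\mapsto 2t$, and evenness of $\sin t/t$ to fix the sign and fold the integral. The only difference is that you spell out the justification of pointwise inversion (integrability of the characteristic function for $n\ge1$ and continuity of $f_{n+1}$), which the paper leaves implicit.
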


\begin{proof}
  By \refT{T3}, Fourier inversion using \eqref{chfsn}, and finally
replacing $t$ by $2t$,
  \begin{equation*}
	\begin{split}
\frac{A_{n,k,\rho}}{n!}
&=f_{n+1}(k+\rho)	  
=\frac1{2\pi}\intoooo e^{-\ii t(k+\rho)} \gf_{S_{n+1}}(t)\,\dd t
\\&
=\frac1{2\pi}\intoooo e^{-\ii t(k+\rho)} \Bigparfrac{e^{\ii t}-1}{\ii t}^{n+1}
\,\dd t
\\&
=\frac1{\pi}\intoooo e^{\ii t(n+1-2k-2\rho)} \Bigparfrac{\sin t}{t}^{n+1}
\,\dd t
	\end{split}
  \end{equation*}
and the result follows by replacing $t$ by $-t$,  noting that $\sin t/t$
is an even function.
\end{proof}

\section{Asymptotic normality and large deviations}\label{SCLT}

It is well-known that the Eulerian distribution $\EUo$ or $\EUi$ is
asymptotically normal, and that furthermore a local limit theorem holds, \ie,
the Eulerian numbers $A_{n,k,1}=A_{n,k+1,0}$ 
can be approximated by a Gaussian function
for large $n$.
This has been  proved by various authors using several different metods, see
below; most of the methods generalize to $\EUrho$ and $A_{n,k,\rho}$ for
arbitrary $\rho\in\oi$.
The basic central limit theorem for $\EUrho$ can be stated as follows.
(Recall that the mean and variance are given by \refT{Tmom}.)
\begin{theorem}
  \label{TCLT}
$\EUrho$ is asymptotically normal as \ntoo, for any real $\rho$, \ie,
  \begin{equation}
	\frac{\EUrho-\E\EUrho}{(\Var\EUrho)\qq}\dto N(0,1)
  \end{equation}
or, more explicitly and simplified,
  \begin{equation}
	\frac{\EUrho-n/2}{n\qq}\dto N\Bigpar{0,\frac1{12}}.
  \end{equation}
\end{theorem}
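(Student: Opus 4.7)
The plan is to reduce the statement to the classical central limit theorem via \refT{Trr} (equivalently \refT{T4}), which identifies $\EUrho$ in distribution with the rounded sum $\floor{S_n+1-\rho}$. The point is that rounding only perturbs $S_n$ by a bounded quantity, which is negligible compared to the $\sqrt n$ scaling in the CLT.

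First I would recall from \refT{Trr} that $\EUrho\eqd\floor{S_n+1-\rho}$ for every real $\rho$ and every $n\ge1$. The classical central limit theorem applied to the i.i.d.\ sequence $(U_i)_{i=1}^\infty$ (each with mean $1/2$ and variance $1/12$) gives
\begin{equation*}
\frac{S_n-n/2}{\sqrt{n/12}}\dto N(0,1) \qquad \text{as }\ntoo.
\end{equation*}
Equivalently, $(S_n-n/2)/\sqrt n\dto N(0,1/12)$.

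Next I would compare $\EUrho$ with $S_n$. Writing $Y_n\=\floor{S_n+1-\rho}-(S_n-\rho)-1$, one has $|Y_n|\le 1$ deterministically, so $\EUrho-S_n$ is bounded by a constant depending only on $\rho$. Hence
\begin{equation*}
\frac{\EUrho-n/2}{\sqrt n}-\frac{S_n-n/2}{\sqrt n}=\frac{\EUrho-S_n}{\sqrt n}=O(1/\sqrt n)\pto 0.
\end{equation*}
Slutsky's theorem then combines this with the CLT displayed above to give $(\EUrho-n/2)/\sqrt n\dto N(0,1/12)$, which is the second displayed conclusion. The first displayed conclusion follows because, by \refT{Tmom}, $\E\EUrho=(n+1)/2-\rho=n/2+O(1)$ and $\Var\EUrho=(n+1)/12=n/12+O(1)$, so replacing $n/2$ by $\E\EUrho$ and $\sqrt n$ by $(\Var\EUrho)\qq$ only shifts and rescales by factors tending to the appropriate constants, and another application of Slutsky finishes the argument.

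There is essentially no hard step: the only thing to notice is the deterministic bound $|\EUrho-S_n|\le C$, which is why the floor operation is invisible at the CLT scale. The local central limit theorem and sharper quantitative statements mentioned in the text require more work (via the explicit \chf{} formula \eqref{efchf} or the cumulant formula \eqref{tmomkk}), but for the qualitative asymptotic normality stated in \refT{TCLT}, the above two-line reduction suffices.
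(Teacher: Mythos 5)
Your proposal is correct and is essentially the paper's own (first) proof: the paper also deduces the result immediately from the representation $\EUrho\eqd\rrr{S_n}=\floor{S_n+1-\rho}$ in \eqref{znrho} together with the classical central limit theorem for $S_n$, the rounding being a bounded perturbation that vanishes at scale $\sqrt n$. You have merely spelled out the Slutsky step and the passage to the exact mean and variance via \refT{Tmom}, which the paper leaves implicit.
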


\begin{proof}
  Immediate by \eqref{znrho} and the central limit theorem for $S_n=\sumiin
  U_i$.
Alternatively, the theorem follows by the method of moments from the formula
\eqref{tmomkk}
for the cumulants in 
\refT{Tmom},
which implies that the normalized cumulants
$\kk_m(\EUrho/(\Var\EUrho)\qq)$ converge to 0 for any $m\ge3$.
Several other proofs are described below.
\end{proof}
 
One refined verison is the
following local limit theorem with an asymptotic expansion
proved by \citet{GawronskiN};
we let as in \eqref{tmomkk} $B_m$ denote the Bernoulli numbers and let
$H_m(x)$
denote the Hermite polynomials, defined \eg{} by
\begin{equation}
  H_m(x)\=(-1)^me^{x^2/2}{\frac{\ddd^m}{\ddd x^m}}e^{-x^2/2},
\end{equation}
see \cite[p.~137]{Petrov}.
(These are the orthogonal polynomials for the standard normal distribution,
see \eg{} \cite{SJIII}.)

\begin{theorem}[\citet{GawronskiN}] 
\label{TLLT}
There exist polynomials $q_\nu$, $\nu\ge1$, such that, for any $\ell\ge0$,
as \ntoo, uniformly for all $k\in\bbZ$ and $\rho\in\oi$,
  \begin{equation}\label{tllt}
\frac{A_{n,k,\rho}}{n!}
=\sqrt{\frac{6}{\pi(n+1)}}\,e^{-x^2/2}
\lrpar{1+\sum_{\nu=1}^{\ell}\frac{q_\nu(x)}{(n+1)^\nu}} 
+O\lrpar{n^{-\ell-3/2}},	
  \end{equation}
where
\begin{equation}
  x=\Bigpar{k+\rho-\frac{n+1}2}\sqrt{\frac{12}{n+1}}.
\end{equation}
Explicitly, 
\begin{equation}
  q_\nu(x)=12^\nu\sum H_{2\nu+2s}(x) 6^{s}
\prod_{m=1}^\nu\frac1{k_m!}\parfrac{B_{2m+2}}{(m+1)(2m+2)!}^{k_m},
\end{equation}
summing over all non-negative integers $(k_1,\dots,k_{\nu})$ with
$k_1+2k_2+\dots+\nu k_\nu=\nu$ and letting $s=k_1+\dots+k_\nu$.
\end{theorem}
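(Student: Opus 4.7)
The plan combines the Fourier inversion already used in the proof of \refT{Tint} with a standard Edgeworth-type asymptotic expansion. By \refT{T3}, $A_{n,k,\rho}/n!=f_{n+1}(k+\rho)$, and the computation in the proof of \refT{Tint} gives
\begin{equation*}
f_{n+1}(y)=\frac{1}{2\pi}\intoooo e^{-\ii t y}\biggpar{\frac{\eit-1}{\ii t}}^{n+1}\dd t.
\end{equation*}
Set $\mu_n\=(n+1)/2$, $\gs_n\=\bigpar{(n+1)/12}\qq$, $y=\mu_n+x\gs_n$, and substitute $s=\gs_n t$ to obtain
\begin{equation*}
\gs_n f_{n+1}(\mu_n+x\gs_n)=\frac{1}{2\pi}\intoooo e^{-\ii sx}\hat\chi_n(s)\dd s,
\qquad
\hat\chi_n(s)\=\E e^{\ii s(S_{n+1}-\mu_n)/\gs_n}.
\end{equation*}
The problem is thus reduced to a local limit expansion with remainder $O\bigpar{(n+1)^{-\ell-1}}$ for the standardised density; dividing by $\gs_n\asymp n\qq$ at the end is precisely what produces the half-integer exponent in the $O(n^{-\ell-3/2})$ error of \eqref{tllt}.

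For the formal expansion I would use the cumulant computation from the proof of \refT{Tmom}: $\log\gf_U(t)=\ii t/2+\sum_{m\ge 2}(B_m/m)(\ii t)^m/m!$. Since $\hat\chi_n(s)=e^{-\ii\mu_n s/\gs_n}\gf_U(s/\gs_n)^{n+1}$ the mean cancels, and because $B_{2r+1}=0$ for $r\ge 1$ only even powers survive past the Gaussian term, giving
\begin{equation*}
\log\hat\chi_n(s)=-\frac{s^2}{2}+\sum_{r\ge 1}\frac{c_r}{(n+1)^r}(\ii s)^{2r+2},
\qquad
c_r\=\frac{12^{r+1}B_{2r+2}}{(2r+2)(2r+2)!}.
\end{equation*}
Exponentiating and sorting by powers of $(n+1)\qw$ via the multinomial expansion of $\exp$, each multi-index $(k_1,\dots,k_\nu)$ with $k_1+2k_2+\dots+\nu k_\nu=\nu$ produces, with $j\=k_1+\dots+k_\nu$, a monomial proportional to $(\ii s)^{2\nu+2j}/(n+1)^\nu$. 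Rewriting $\prod_r c_r^{k_r}$ using $\sum_r(r+1)k_r=\nu+j$ extracts a factor $12^\nu\cdot 6^j$ and leaves $\prod_r (B_{2r+2}/((r+1)(2r+2)!))^{k_r}$, exactly as in the stated formula for $q_\nu$. Termwise Fourier inversion via the standard identity
\begin{equation*}
\frac{1}{2\pi}\intoooo e^{-\ii sx}(\ii s)^m e^{-s^2/2}\dd s=\frac{H_m(x)}{\sqrt{2\pi}}\,e^{-x^2/2},
\end{equation*}
immediate from the definition of $H_m$, replaces each monomial $(\ii s)^{2\nu+2j}$ by $H_{2\nu+2j}(x)$ and produces the overall prefactor $\gs_n\qw(2\pi)\qqw e^{-x^2/2}=\sqrt{6/(\pi(n+1))}\,e^{-x^2/2}$ of \eqref{tllt}.

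The main obstacle is rigorous error control uniformly in $k$ and $\rho$. I would split the $s$-integral at a cutoff $|s|=n^{1/6}$ (any power growing slower than $\gs_n$ will do). In the inner region one writes
\begin{equation*}
\hat\chi_n(s)=e^{-s^2/2}\Bigpar{\sum_{\nu=0}^{\ell}Q_\nu(s)/(n+1)^\nu+R_\ell(n,s)}
\end{equation*}
and checks $|R_\ell(n,s)|\le C s^{2\ell+4}/(n+1)^{\ell+1}$ by combining the Taylor remainder of $\log\gf_U$ past order $2\ell+2$ with the Taylor remainder of $\exp$; integration of $|R_\ell|\,e^{-s^2/2}$ then yields an inner contribution of $O\bigpar{(n+1)^{-\ell-1}}$. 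For $|s|\ge n^{1/6}$ use $|\gf_U(t)|\le 1-ct^2$ on $|t|\le\pi$ and $|\gf_U(t)|\le 2/|t|$ on $|t|\ge\pi$: the first bound yields $|\gf_U(s/\gs_n)|^{n+1}\le\expbig{-c(n+1)^{1/3}}$ on $n^{1/6}\le|s|\le\pi\gs_n$, and the second makes the remaining tail $\int_{|s|\ge\pi\gs_n}(2\gs_n/|s|)^{n+1}\dd s$ super-polynomially small for $n\ge 2$. Dividing by $\gs_n\asymp n\qq$ converts the inner $O\bigpar{(n+1)^{-\ell-1}}$ remainder into the stated $O(n^{-\ell-3/2})$. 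Uniformity in $\rho$ is automatic since $\rho$ enters the \lhs{} of \eqref{tllt} only through the argument $k+\rho$ of $f_{n+1}$, and uniformity in $k$ (equivalently in $x$) comes from the Gaussian factor $e^{-x^2/2}$ dominating the polynomial growth of each $q_\nu(x)$ and of the remainder.
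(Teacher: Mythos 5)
Your proposal is correct and follows essentially the same route as the paper: the paper's proof consists of combining the identity $A_{n,k,\rho}/n!=f_{n+1}(k+\rho)$ from \refT{T3} with the local limit theorem with Edgeworth expansion for densities (citing \cite[Theorem VII.15 and (VI.1.14)]{Petrov}) together with $\kk_m(U)=B_m/m$ and $B_m=0$ for odd $m\ge3$, which is exactly the Esseen method you implement. The only difference is that you prove the needed local limit expansion from scratch (Fourier inversion, cumulant expansion, saddle-free splitting of the integral, and the Hermite inversion identity) where the paper cites it, and your details, including the bookkeeping yielding the stated formula for $q_\nu$, check out.
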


The polynomial $q_\nu(x)$ has degree $4\nu$.
The first two are:
\begin{align}
  q_1&=-\frac1{20}H_4(x) = - \frac{x^4-6x^2+3}{20},
\\
q_2&=\frac1{800}H_8(x)+\frac{1}{105}H_6(x)
=\frac{21x^8-428x^6+2010x^4-1620x^2-195}{16800}.
\end{align}

\begin{remark}
  We state \refT{TLLT} using an expansion in negative powers of $n+1$. Of
course, it is possible to use powers of $n$ instead, but then the polynomials
$q_i(x)$ will be modified.
\end{remark}

Before giving a proof of \refT{TLLT}, we give some history and discuss
various methods.
The perhaps first proof of asymptotic normality 
for Eulerian numbers
(\ie, \refT{TCLT} in the classical case $\rho=1$)
was given by
\citet{DavidB}, using the generating function \eqref{gf1} below to calculate
cumulants. 
\citet[Example 3.5]{Bender} used 
(for the equivalent case  $\rho=0$) 
instead a singularity analysis of the generating
function \eqref{gf1} to obtain this and further results;
see also
\citet[Example IX.12, p.~658]{FlajoletS}.

The representation \eqref{pb} of $\EUrho$ as a sum of independent 
(but not identically distributed)
Bernoulli
variables was  used by \citet{Carlitz-asN} to show asymptotic normality 
(global and local central limit theorems and a Berry--Esseen estimate) 
for Eulerian numbers
(\ie{} for $\rho=0$).
\citet{Sirazdinov} gave (also for $\rho=0$) 
a local limit theorem including the second order term ($\nu=1$)
in \refT{TLLT}.
(We have not been able to obtain the original reference, but we believe he
used this representation.)
More recently,  
\citet{GawronskiN} have used this method for a general $\rho\in\oi$
to show a global central limit theorem 
and
 the refined local limit theorem \refT{TLLT} above.

Furthermore, \eqref{pb} was also used (for $\rho=0$)
by \citet{Bender} to obtain a local limit theorem from the global central
limit theorem (proved by him by other methods, as said above).

Tanny \cite{Tanny} 
showed  global and local central limit theorems (for $\rho=0$) using the
representation $\EUo\eqd\floor{S_n}+1$, see our Theorems \ref{T1} and \ref{T4},
together with the standard central limit theorem for $S_n$;
see also \citet[Section 1.3.2]{Sachkov}.
This too extends to arbitrary $\rho$.

Esseen \cite{Esseen} used instead (still for $\rho=0$) the relation given
here as \eqref{t3} with the density function of $S_{n+1}$, together with the
standard local limit theorem for $S_n$.
This too extends to arbitrary $\rho$, and is perhaps the simplest method to
obtain local limit theorems. 
Moreover, as noted by \cite{Esseen}, it easily yields
an asymptotic expansion with
arbitrary many terms as in \refT{TLLT}. 
(For $\rho=0$, \cite{Esseen} gave the second term explicitly;
as mentioned above, this term was also given by \citet{Sirazdinov}.)
Furthermore, the first three terms 
($\nu\le2$)
in \refT{TLLT}
were given (for arbitrary $\rho$) by \citet{Nicolas},
using essentially the same method, but stated in analytic formulation
rather than probabilistic. (See also, for the case $\rho=0$, \cite{XuWang}.)

Esseen \cite{Esseen} also pointed out that 
$\EUi=\EUo-1$, regarded as the number of descents in a random permutation,
can be represented as
\begin{equation}
\EUi\eqd \sum_{i=1}^{n-1}\ett{U_i>U_{i+1}},  
\end{equation}
where $U_1,U_2,\dots$ are \iid{} with
the distribution $U(0,1)$; the global central limit theorem thus follows
immediately from the standard central limit theorem for $m$-dependent
sequences.
(However, we do not know any similar representation for the case
$\rho\in(0,1)$.) 

\begin{proof}[Proof of \refT{TLLT}]
We use the method of  \cite{Esseen}, and note that
the theorem follows immediately from \eqref{t3} and the local limit theorem for
$f_n(x)$, see \eg{} \cite[Theorem VII.15 and (VI.1.14)]{Petrov},
using $\kk_m(U_i)=B_m/m$ for $m\ge2$ and noting that $B_m=0$ for odd
$m\ge3$.
(As said above, the proof in \cite{GawronskiN} is somewhat different,
although it also uses \cite{Petrov}.)
\end{proof}

\begin{remark}\label{Rtail}
By using \cite[Theorem VII.17]{Petrov} in the proof of \refT{TLLT}
(and taking as many terms as needed),
the error term in \eqref{tllt} is improved to 
$O\lrpar{(1+|x|^K)\qw n^{-\ell-3/2}}$, for any fixed $K>0$.	
This is, however, a superficial improvement, since this is trivial for
$k+\rho\notin[0,n+1]$ (when $A_{n,k,\rho}=0$), and otherwise easily follows 
from \eqref{tllt} by increasing $\ell$.
\end{remark}

Although \refT{TLLT} holds uniformly for all $k$, it is of interest mainly
when $k=n/2+O(\sqrt{ n\log n})$, when $x=O(\sqrt{\log n})$, since 
for  $|k-n/2|$  larger, the main term in \eqref{tllt} is  smaller
than the error term. (This holds also for the improved version in \refR{Rtail}.)
However, 
we can also easily obtain asymptotic estimates for other $k$ by the same method,
now using \eqref{t3} and
large deviation estimates for the density $f_{n+1}(x)$, 
which are obtained by  standard arguments.
(The saddle point method, which in this context is essentially the same as 
using Cram\'er's \cite{Cramer} method of conjugate distributions.
See \eg{} \cite[Chapter 2]{DemboZeitouni} or \cite[Chapter 27]{Kallenberg}
for more general large deviation theory, and \cite{FlajoletS} for the saddle
point method.)
In the classical case $\rho=0$,
this was done 
by \citet{Esseen} 
(for $1\le k\le n$),
improving an earlier result
by \citet{Bender} ($\eps n \le k \le(1-\eps)n$ for any $\eps>0$)
who used a related argument using the generating function \eqref{gf1}.
The result extends immediately to any $\rho$
as follows. 

Let $\psi(t)$ be the moment generating function of $U\sim U(0,1)$, \ie,
\begin{equation}
  \psi(t)=\frac{e^t-1}{t},
\end{equation}
and let for $a\in(0,1)$ 
\begin{equation}\label{ma}
  m(a)\=\min_{-\infty<t<\infty} e^{-at}\psi(t).
\end{equation}
Since $\log\psi(t)$ is convex (a general property of \mgf{s}, and easily
verified directly in this case), and the derivative $(\log\psi)'$
increases from 0 to 1, the minimum is attained at a unique
$t(a)\in(-\infty,\infty)$ for each $a\in(0,1)$, which is the solution to the
equation 
\begin{equation}\label{at}
  a=(\log\psi)'(t) = \frac{e^t}{e^t-1}-\frac{1}t
= \frac{1}{1-e^{-t}}-\frac{1}t.
\end{equation}
Set
\begin{equation}
  \gss(a)\=(\log\psi)''(t(a)) = \frac{1}{t(a)^2}-\frac{e^{t(a)}}{(e^{t(a)}-1)^2}
=\frac{1}{t(a)^2}-\frac{1}{\sinh^2 t(a)},
\end{equation}
interpreted (by continuity) as $(\log\psi)''(0)=1/12$ when $a=1/2$ and thus
$t(a)=0$. 

\begin{theorem}\label{Tldev}
Assume  $\rho\in\oi$ and $0< k+\rho< n+1$.
Then
  \begin{equation}\label{tldev}
	\frac{A_{n,k,\rho}}{n!}
=\frac{(m(a))^{n+1}}{\sqrt{2\pi(n+1)\gss(a)}}\bigpar{1+O(n\qw)},
  \end{equation}
where   $a\=(k+\rho)/(n+1)$, uniformly in all $k$ and $\rho$ such that
$0<  k+\rho< n+1$.
\end{theorem}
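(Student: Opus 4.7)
By \refT{T3}, $A_{n,k,\rho}/n! = f_{n+1}(k+\rho) = f_{n+1}\bigpar{a(n+1)}$, so the task reduces to a pointwise large-deviation estimate for the density of $S_{n+1}$ at the point $a(n+1)$. This is a textbook setting for Cram\'er's method of conjugate distributions (exponential tilting), which is the approach of \citet{Esseen} in the classical case; since $\rho$ has been absorbed by \refT{T3}, extending to arbitrary $\rho\in\oi$ requires no new idea.

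Fix $a\in(0,1)$ and let $t=t(a)$ be the saddle determined by \eqref{at}. Let $\tU$ have the tilted density $e^{tu}/\psi(t)$ on $\oi$, with independent copies $\tU_1,\dots,\tU_{n+1}$, and set $\tS_{n+1}=\sum_{i=1}^{n+1}\tU_i$ with density $\tilde f_{n+1}$. A direct comparison of densities gives
\begin{equation*}
f_{n+1}(x) = \psi(t)^{n+1}e^{-tx}\tilde f_{n+1}(x).
\end{equation*}
The mean and variance of $\tU$ are $(\log\psi)'(t)=a$ and $(\log\psi)''(t)=\gss(a)$, so $\tS_{n+1}$ has mean $a(n+1)$ and variance $(n+1)\gss(a)$; plugging in $x=a(n+1)$ collapses the prefactor to $\bigpar{e^{-ta}\psi(t)}^{n+1}=m(a)^{n+1}$ by \eqref{ma}. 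Hence \eqref{tldev} is equivalent to the local central limit statement
\begin{equation*}
\tilde f_{n+1}\bigpar{a(n+1)} = \frac{1}{\sqrt{2\pi(n+1)\gss(a)}}\bigpar{1+O(n\qw)},
\end{equation*}
which for each fixed $a$ is the standard local CLT at the mean applied to $\tS_{n+1}$ (see \eg{} \cite[Theorem~VII.15]{Petrov}).

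The main obstacle is uniformity as $a\downto0$ or $a\upto1$, when $t(a)\to\mp\infty$ and $\tU$ concentrates at an endpoint of $\oi$. This is handled by the routine device of rescaling to the standardised variable $\bigpar{\tU-a}/\gs(a)$ and verifying, uniformly in $a\in(0,1)$, that it has bounded third absolute moment and a \chf{} that decays away from the origin at a rate sufficient to yield the usual Edgeworth error. Both properties follow from the explicit formulas $\psi(t)=(e^t-1)/t$ and $\gss(a)=1/t(a)^2-1/\sinh^2 t(a)$; the verification is technical but completely parallel to that of \citet{Esseen} for the case $\rho=0$, the only change being that the evaluation point $k$ is replaced by $k+\rho$.
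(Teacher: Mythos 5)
Your proposal is correct and is essentially the paper's proof: the paper performs the same computation in analytic guise, shifting the Fourier-inversion contour for $f_n$ to the saddle point $t(a)$ (which, as the paper itself remarks, is the same as Cram\'er's conjugate-distribution method), and it likewise leaves the uniform local-CLT estimates as ``routine.'' The one point of divergence is at the extremes: the paper restricts the saddle-point argument to $1\le x\le n-1$, where $t(a)=O(n)$, and disposes of $0<x<1$ and $n-1<x<n$ directly from the exact formula $f_n(x)=x^{n-1}/(n-1)!$ and Stirling, whereas you push the uniform local CLT all the way to $a\to0$ and $a\to1$ --- this does work (the standardised tilted variable tends to a centred exponential, and all its moments stay uniformly bounded), but note that a bounded \emph{third} absolute moment alone is not enough for a \emph{relative} error $O(n^{-1})$ at the mean; you need the Edgeworth expansion one order further, i.e.\ uniform fourth-moment control, which fortunately holds here.
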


\begin{proof}
We follow \citet{Esseen} with some additions.
  By \eqref{t3} (and replacing $n$ by $n-1$), 
the statement is equivalent to the estimate
  \begin{equation}\label{maj}
f_n(x)
=\frac{(m(a))^{n}}{\sqrt{2\pi n\gss(a)}}\bigpar{1+O(n\qw)},
  \end{equation}
where $a=x/n$, uniformly for $x\in(0,n)$.
To see this, we use Fourier inversion (assuming $n\ge2$), 
noting that the \chf{} of $S_n$ is
$\psi(\ii t)^n$,  and shift the line of integration to the saddle point $t(a)$:
\begin{equation}
  \begin{split}
  f_n(x)
&=
\frac{1}{2\pi}\int_{-\infty}^{\infty} e^{-\ii xt}\psi(\ii t)^n\dd t
=\frac{1}{2\pi\ii}\int_{-\infty\ii}^{\infty\ii} \bigpar{e^{-az}\psi(z)}^n\dd z
\\&
=\frac{1}{2\pi\ii}\int_{t(a)-\infty\ii}^{t(a)+\infty\ii} 
\bigpar{e^{-az}\psi(z)}^n\dd z
=\frac1{2\pi}\intoooo g\bigpar{t(a)+iu}^n\dd u,
  \end{split}
\end{equation}
where $g(z)\=e^{-az}\psi(z)$. (Thus $g(t(a))=m(a)$.)

If we assume $x\in[1,n-1]$ and thus $a\in[1/n,1-1/n]$, then
$t(a)=O(n)$ by \eqref{at}.
Routine estimates (and the change of variable
$u=s/\sqrt n$) show that then the integral over $|u|\le n^{-.49}$ 
yields the \rhs{} of \eqref{maj}, uniformly in $x$,
while the remaining integral is smaller by a factor $O(n^{-100})$ 
(for example). 
We omit the details.

If $0<x<1$, so $0<a<1/n$,
then $f_n(x)=x^{n-1}/(n-1)!$, while \eqref{at} and \eqref{ma} yield
$t(a)=-a\qw+O(a^{-2}e^{-1/a})=-n/x+O(e^{-n/2x})$ 
and 
\begin{equation}
  m(a)=e^{-at(a)}\psi(t(a)) = ae^{-1}\bigpar{1+O(e^{-n/2})};
\end{equation}
then \eqref{tldev} is easily verified directly,
using Stirling's formula. The case $n-1<x<n$ is symmetric.
We again omit the details.
\end{proof}

As remarked by \citet{Esseen}, it is possible to obtain an expansion with
further terms in \eqref{tldev} by the same method.

The saddle point method is standard in problems of this type. However, it is
perhaps surprising that it can be used with a uniform relative error bound
for all $t(a)\in(-\infty,\infty)$.

\section{More on rounding}
\label{Sval}

Let $p_1,\dots,p_n$ be given probabilities (or proportions) with 
$\sumiin p_i=1$ and let $N$ be a (large) integer.
Suppose that we want to round $Np_i$ to integers, 
by standard rounding or, more generally, by \rr{} for some given $\rho\in\bbR$.
It is often desirable that the sum of the results is exactly $N$, but this
is, of course,  not always the case. We thus consider the discrepancy
\begin{equation}
\gdr\=\sumiin\rrr{Np_i}-N.  
\end{equation}
More generally, we may round $(N+\gam)p_i$ for some fixed $\gam\in\bbR$
and we define the discrepancy
\begin{equation}\label{gdrgg}
\gdrgg\=\sumiin\rrr{(N+\gam)p_i}-N.  
\end{equation}

\begin{example}
If a statistical table is presented as percentages,
with all percentages rounded to integers, we have this situation with $N=100$;
the percentages do not necessarily add up to 100, and the error is given by
$\gdx{1/2}$. Rounding to other accuracies correspond to other values of $N$.
This has been studied by several authors, see
\citet{MostellerYZ} and
\citet{DiaconisF}.
\end{example}

\begin{example}\label{Eval7}
The general idea of a proportional election method is that a given number
$N$ of seats are to be distributed among $n$ parties which have obtained
$v_1,\dots,v_n$ votes. With $p_i\=v_i/\sum_{j=1}^n v_j$, the proportion of
votes for party $i$, the party should ideally get $Np_i$ seats, but the
number of seats has to be an integer so some kind of rounding procedure is
needed. (See \refE{Eval} and the reference given there
for some important methods used in practice.)

A simple attempt would be to round $Np_i$ to the nearest integer and give
$\rrx{1/2}{Np_i}$ seats to party $i$.
More generally, we might fix some $\rho\in\oi$ and
give the party  $\rrr{Np_i}$ seats.
Of course, this is not a workable election
method since the
sum in general is not exactly equal to $N$, and the error is given by
$\gdr$. 
(In principle the method could be used for elections if one accepts a
varying size of the elected house, but I don't know any examples of it being
used.) 
Nevertheless, this can be seen as the first step in an algorithm
implementing divisor methods, see \citet{HappacherP96} and \citet{Pukelsheim}.
In this context it is also useful to consider the more general
$\rrr{(N+\gam)p_i}$ for some given $\gam\in\bbR$, 
see again \cite{HappacherP96} and \cite{Pukelsheim}; then the error is given
by \eqref{gdrgg}.
\end{example}

\begin{example}
Roundings of $(N+\gam)p_i$ and thus \eqref{gdrgg} occur also in the study of
quota methods of elections, for example  
\emph{Droop's method} where we take $\gam=1$ and adjust $\rho$ to obtain the
sum $N$, see \cite[Appendix B]{SJ262}.
\end{example}

If we assume that the proportions $p_i$ are random, it is thus of interest
to find the distribution of the discrepancy $\gdrgg$, and in particular
of $\gdx{\rho,0}=\gdr$. 
We consider the asymptotic distribution
of $\gdrgg$ as $N\to\infty$. 

The simplest assumption is that $(p_1,\dots,p_n)$ is uniformly distributed
over the $n-1$-dimensional unit simplex
$\set{(p_i)_1^n\in\bbR_+^n:\sum_ip_i=1}$, but it 
turns out (by Weyl's lemma, see \eg{} \cite[Lemma 4.1 and Lemma C.1]{SJ262})
that the asymptotic distribution is the same for any absolutely continuous
distribution of $(p_i)_1^{n-1}$, and we state our results for this setting. 

\begin{remark}
Alternatively, it is also possible to consider a
fixed $(p_i)_1^n$ (for almost all choices) and let $N$ be random as in
\cite[Section 1]{SJ262} (with $N\pto\infty$). The same asymptotic
results are obtained in this case too, but
we leave the details to the reader.
\end{remark}

In the standard case $\rho=1/2$ and $\gam=0$, 
the asymptotic distribution of the discrepancy $\gdx{1/2}$ 
was found by \citet{DiaconisF}, assuming (as we do here) that $(p_i)_1^n$
have an absolutely continuous distribution on the unit simplex.
(The cases $n=3,4$,
with uniformly distributed probabilities $(p_i)_1^n$,
were earlier treated by
\citet{MostellerYZ}.)
This was extended to $\gdr$ with arbitrary $\rho$ (still with $\gam=0$)
by \citet{BalinskiR}. 
\citet{HappacherP96,HappacherP00} considered general $\rho$ and $\gam$
(assuming  uniformly distributed $(p_i)_1^n$)
and found 
asymptotics of the mean and variance of $\gdrgg$ 
\cite{HappacherP96} and 
(at least in the case $\gam=n(\rho-\frac12)$)
the asymptotic distribution
\cite{HappacherP00}. 
The exact distribution of $\gdrgg$ for a finite $N$
(assuming uniform  $(p_i)_1^n$)
was given by
\citet[Sections 2 and 3]{Happacher}. 
Furthermore, asymptotic results for the
probability $\P(\gdrgg=0)$ of no discrepancy
have also been given 
(assuming uniform  $(p_i)_1^n$)
by
\citet[p.~185]{Kopfermann} ($\rho=1/2$, $\gam=0$), 
and
\citet{GawronskiN} ($\rho=1/2$, $\gam=0$); the latter paper moreover gives the
connection to \EFx numbers. (The other papers referred to here
state the results using $S_{n-1}$ or the density function $f_n$ of $S_n$, or
the explicit sum in \eqref{fn} for the latter.)

We state and extend these asymptotic results for $\gdrgg$ as follows.
Equivalent reformulations of \eqref{troundb}--\eqref{trounda} 
(including the versions in the references above)
can be given
using
\eqref{t4b}, see also \eqref{qa} below.
\begin{theorem}\label{Tround}
  Suppose that $(p_1,\dots,p_n)$ are random with an absolutely continuous
  distribution on the $n-1$-dimensional unit simplex. Then, as $N\to\infty$,
for any fixed real $\rho$ and $\gam$, $n\ge2$ and $k\in\bbZ$,
\begin{equation}\label{troundb}
  \gdrgg\dto
\EU_{n-1,n\rho-\gam}.
\end{equation}
In other words,
\begin{equation}\label{trounda}
  \P(\gdrgg=k)
\to
\frac{A_{n-1,k+\floor{n\rho-\gam},\frax{n\rho+\gam}}}{(n-1)!}.
\end{equation}

Furthermore, 
\begin{align}
\E  \gdrgg&\to 
\E\EU_{n-1,n\rho-\gam}=
n\Bigpar{\frac12-\rho}+\gam \label{troundc}
\intertext{and if $n\ge3$,}
\Var  \gdrgg&\to 
\Var\EU_{n-1,n\rho-\gam}=
\frac{n}{12}. \label{troundd}
\end{align}
\end{theorem}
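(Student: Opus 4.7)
The plan is to rewrite $\gdrgg$ in terms of fractional parts, apply the paper's cited Weyl equidistribution lemma to pass to a limit involving \iid{} uniforms, and then use \refT{Trr} to identify the limit as the \EFx distribution $\EU_{n-1,n\rho-\gam}$.

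First, using $\rrr x=\floor{x+1-\rho}=(x+1-\rho)-\frax{x+1-\rho}$ together with $\sum_ip_i=1$, I would rewrite
\begin{equation*}
\gdrgg=c-\sum_{i=1}^n\frax{(N+\gam)p_i+1-\rho},\qquad c\=\gam+n(1-\rho),
\end{equation*}
reducing the problem to the limit of this sum. By the Weyl-type lemma cited in the paper (\eg\ \cite[Lemma~4.1]{SJ262}), absolute continuity of $(p_1,\dots,p_{n-1})$ forces $(\frax{(N+\gam)p_i})_{i=1}^{n-1}$ to converge jointly to $(U_1,\dots,U_{n-1})$, \iid{} uniform on $\oi$. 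The $n$th coordinate is determined by the simplex constraint $p_n=1-\sum_{i<n}p_i$: since $(N+\gam)p_n=N+\gam-\sum_{i<n}(N+\gam)p_i$, dropping the integer $N$ and the integer parts $\floor{(N+\gam)p_i}$ from inside the braces yields
\begin{equation*}
\frax{(N+\gam)p_n+1-\rho}=\bigcpar{\gam+1-\rho-\sum_{i<n}\frax{(N+\gam)p_i}}.
\end{equation*}

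Setting $V_i\=\frax{U_i+1-\rho}$ (again \iid{} uniform) and $T\=\sum_{i=1}^{n-1}V_i$, a short modular calculation (expressing each $U_i$ as $V_i-(1-\rho)$ plus an integer and observing that those integers disappear inside the outer $\frax{\cdot}$) collapses the limit of $\sum_{i=1}^n\frax{(N+\gam)p_i+1-\rho}$ to
\begin{equation*}
T+\frax{c-T}=c-\floor{c-T},
\end{equation*}
so $\gdrgg\dto\floor{c-T}$ with $T\eqd S_{n-1}$. To recognize this as an \EFx variable, I would then use the reflection symmetry $S_{n-1}\eqd(n-1)-S_{n-1}$ (because $1-U\eqd U$) to rewrite
\begin{equation*}
\floor{c-S_{n-1}}\eqd\floor{S_{n-1}+c-(n-1)}=\floor{S_{n-1}+1-(n\rho-\gam)},
\end{equation*}
which by \eqref{rrr} equals $\rrx{n\rho-\gam}{S_{n-1}}$ and hence has distribution $\EU_{n-1,n\rho-\gam}$ by \refT{Trr}. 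This is \eqref{troundb}, and \eqref{trounda} follows from \eqref{t4b} applied to $\EU_{n-1,n\rho-\gam}$.

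For the moment statements \eqref{troundc} and \eqref{troundd}, the identity $\rrr x-x=1-\rho-\frax{x+1-\rho}$ shows that $\gdrgg$ takes integer values in a bounded interval depending only on $n,\rho,\gam$ (independent of $N$), so the distributional convergence upgrades to convergence of all moments and the stated values drop out of \refT{Tmom} applied to $\EU_{n-1,n\rho-\gam}$. The main obstacle I anticipate is the short modular computation collapsing the fractional-part sum to a single floor and, in particular, getting the reflection step to land on the correct index $n\rho-\gam$—sign and off-by-one errors are easy to make there.
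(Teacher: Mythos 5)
Your proposal is correct and follows essentially the same route as the paper: rewrite $\gdrgg$ as $\gam+n(1-\rho)-\sum_i\frax{(N+\gam)p_i+1-\rho}$, invoke the Weyl-type lemma from \cite{SJ262}, use $1-U\eqd U$ to land on $\rrx{n\rho-\gam}{S_{n-1}}$ via \refT{Trr}, and get the moments from uniform boundedness plus \refT{Tmom}. The only (cosmetic) difference is how the $n$th fractional part is eliminated: you substitute the simplex constraint and do a modular computation, whereas the paper simply observes that $\gdrgg$ is an integer and $Y_n\in[0,1)$, so the $n$-term sum can be replaced by the floor of the $(n-1)$-term sum.
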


\begin{proof}
  Let $X_i\=(N+\gam)p_i+1-\rho$ and $Y_i\=\frax{X_i}$.
Thus, by the definition \eqref{rrr}, $\rrr{(N+\gam)p_i}=\floor{X_i}$ and
hence by \eqref{gdrgg},
  \begin{equation}\label{tr0}
	\begin{split}
\gdrgg&
=\sum_{i=1}^n\floor{X_i}-N	  
=\sum_{i=1}^n(X_i-Y_i)-N
\\&
=N+\gam+n(1-\rho)-\sumiin Y_i - N
\\&
=\gam+n(1-\rho)-\sumiin Y_i.	  
	\end{split}
  \end{equation}
Since this is an integer, and $Y_{n}\in[0,1)$, it follows that
  \begin{equation}\label{tr1}
\gdrgg
=\lrfloor{\gam+n(1-\rho)-\sumiini Y_i}.	  
  \end{equation}
As $N\to\infty$, the fractional parts 
$(\frax{Np_i})_1^{n-1}$ converge jointly
in distribution to the
independent uniform random variables $(U_i)_1^{n-1}$, 
see \cite[Lemma C.1]{SJ262},
and 
since $Y_i=\frax{\frax{Np_i}+\gam p_i+1-\rho}$, 
the same holds for $(Y_i)_{1}^{n-1}$.
Thus \eqref{tr1} implies, using $1-U_i\eqd U_i$,
\begin{equation}\label{qa}
  \begin{split}
\gdrgg&\dto
\lrfloor{\gam+n(1-\rho)-\sumiini U_i}
\eqd\lrfloor{\gam+1-n\rho+\sumiini U_i}
\\&
=\lrfloor{\gam+1-n\rho+S_{n-1}}
=\rrx{n\rho-\gam}{S_{n-1}}
.
  \end{split}
\end{equation}
Since 
$\rrx{n\rho-\gam}{S_{n-1}}
\eqd \EU_{n-1,n\rho-\gam}$
by \refT{Trr}, this proves \eqref{troundb}; furthermore, \refT{Trr} yields also 
\eqref{trounda}.
Since $\gdrgg$ is uniformly bounded by
\eqref{tr0}, \eqref{troundb} implies moment convergence  and thus
\eqref{troundc}--\eqref{troundd} follow by \refT{Tmom}.
\end{proof}

\begin{remark}
In the case of uniformly distributed probabilities $(p_i)_1^n$, 
\citet{HappacherP96} 
have given a more precise form of
the moment asymptotics \eqref{troundc}--\eqref{troundd} 
with explicit higher order terms.
\end{remark}

\begin{example}
We see from \eqref{troundc} that the choice
$\gam=n(\rho-\frac12)$ yields $\E\gdrgg\to0$, so the discrepancy is
asymptotically unbiased. This was shown by \citet{HappacherP96}, who therefore
recommend this choice of $\gam$ when the aim is to try to avoid a discrepancy;
see also \cite{HappacherP00}.  
\end{example}

For standard rounding, $\rho=1/2$ and $\gam=0$, 
\refT{Tround} yields the result of \citet{DiaconisF}, which we state as a
corollary.

\begin{corollary}\label{Cr}
With assumptions as in \refT{Tround}, 
\begin{equation}\label{cr1}
  \gdqq\dto
\EU_{n-1,n/2}.
\end{equation}
In other words,
\begin{equation}\label{cr2}
  \P(\gdqq=k)
\to
\frac{A_{n-1,k+\floor{n/2},\frax{n/2}}}{(n-1)!}.
\end{equation}
Using the notation of \refC{C1}, the result can also be written
\begin{equation}\label{cr3}
  \gdqq\dto
\floor{\tS_{n-1}}_{1/2}.
\end{equation}
\end{corollary}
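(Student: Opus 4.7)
The plan is to derive \refC{Cr} as an immediate specialization of \refT{Tround}, with essentially no additional work. First I would set $\rho=1/2$ and $\gam=0$ in \eqref{troundb}: the parameter on the right becomes $n\rho-\gam=n/2$, so the convergence $\gdqq\dto\EU_{n-1,n/2}$ in \eqref{cr1} follows directly. Similarly, substituting into \eqref{trounda} gives \eqref{cr2}, since with $\gam=0$ both shifts $\floor{n\rho-\gam}$ and $\frax{n\rho+\gam}$ collapse to $\floor{n/2}$ and $\frax{n/2}$.

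For the third form \eqref{cr3}, I would invoke \refC{C1} applied with $n$ replaced by $n-1$. Its conclusion is that $\floor{\tS_{n-1}}_{1/2}$ has distribution $\EU_{n-1,((n-1)+1)/2}=\EU_{n-1,n/2}$, which is exactly the limit distribution appearing in \eqref{cr1}. Combining these two distributional identities yields \eqref{cr3}.

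Since all three conclusions reduce to substitution of particular numerical values into results already established (\refT{Tround} and \refC{C1}), there is no genuine obstacle in the argument; the only bookkeeping item worth checking is that $((n-1)+1)/2=n/2$ matches the index $n\rho-\gam$ from \refT{Tround} when $\rho=1/2$ and $\gam=0$, which it clearly does. The corollary is therefore essentially a restatement of \refT{Tround} tailored to the case of standard rounding, together with the reformulation of $\EU_{n-1,n/2}$ as a rounded sum of symmetric uniforms provided by \refC{C1}.
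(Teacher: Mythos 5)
Your proposal is correct and matches the paper's own proof: the paper likewise obtains \eqref{cr1}--\eqref{cr2} by specializing \refT{Tround} to $\rho=1/2$, $\gam=0$, and then deduces \eqref{cr3} from \refC{C1}. The bookkeeping check that $((n-1)+1)/2=n/2$ is exactly the point that makes \refC{C1} applicable here.
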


\begin{proof}
  \refT{Tround} yields \eqref{cr1}--\eqref{cr2}, and \eqref{cr3} then
  follows by \refC{C1}.
\end{proof}

The asymptotic distribution in \refC{Cr} can also be described as
$\EU_{n-1,0}$ when $n$ is even and $\EU_{n-1,1/2}$ when $n$ is
odd, in both cases centred by subtracting the mean $\floor{n/2}$.
Note that in this case the asymptotic distribution is symmetric, \eg{} by
\eqref{EUsymm}. 

\begin{example}
Taking  $k=0$ in \refC{Cr} we obtain the asymptotic
probability that standard rounding yields the correct sum:
\begin{equation}\label{e7}
  \P(\gdx{\xfrac12}=0)
\to
A_{n-1,\floor{n/2},\frax{n/2}}/(n-1)!.
\end{equation}
The limit is precisely the value for
$\P\bigpar{\floor{\tS_{n-1}}_{1/2}=0}$ given by \refC{C1}.
\end{example}

\begin{remark}
  Even if $(p_1,\dots,p_n)$ is uniformly distributed, the result in 
\refT{Tround}
is in general only asymptotic and not exact for finite $N$ because of edge
effects. For a simple example, if $\rho=1/2$ and $N=1$, then 
$\P(\gdx{\xfrac12}=0)=\P\bigpar{\sum_1^n\rrx{\xfrac12}{p_i}=1}
=n \P\bigpar{p_1>1/2} = n2^{1-n}$, which differs from the asymptotical value
in \eqref{e7} for $n\ge4$. 
(It is much smaller for large $n$, since it
decreases exponentially in $n$.)
See \cite{Happacher} for an exact formula for finite $N$.
\end{remark}
 
\section{Example: The method of greatest remainder}\label{Share}

By \eqref{t4c}, \eqref{efreal} and \eqref{efdist}, the distribution function
$F_n$ of $S_n$ can be expressed using the \EFx distribution
or using \EFx numbers.
As another example involving election methods, 
consider again an election as in \refE{Eval7}, with $N$ seats distributed among
$n\ge3$ parties having proportions $p_1,\dots,p_n$ of the votes. 
Let 
$s_1,\dots,s_n$ be the number of seats assigned to
the parties by the method of greatest remainder (Hare's method; Hamilton's
method), see \refE{Eval}, and  consider the bias $\xgD_1\=s_1-Np_1$ for party
1. 
Let again $(p_1,\dots,p_n)$ be random as in \refT{Tround}, and let
$N\to\infty$;
or let $(p_1,\dots,p_n)$ be fixed and $N$ random, with conditions
as in \cite{SJ262}.
It is shown in \cite[Theorems 3.13 and 6.1]{SJ262} that then 
\begin{equation}\label{job}
 \xgD_1\dto \hgd\=\tU_0+\frac1n\sum_{i=1}^{n-2}\tU_i = \tU_0+\frac1n\tS_{n-2},
\end{equation}
where $\tU_i\=U_i-\frac12$ are independent and uniform on $[-\frac12,\frac12]$.

The limit distribution in \eqref{job} has density function, 
using 
\eqref{t4c} and \eqref{efreal},
\begin{equation}\label{qme}
  \begin{split}
\ftd(x)&=\P\Bigpar{\frac1n\tS_{n-2}\in\bigpar{x-\tfrac12,x+\tfrac12}}	
\\&
=\P\bigpar{\tS_{n-2}\in\xpar{nx-\xfrac n2,nx+\xfrac n2}}	
\\&
=\P\bigpar{S_{n-2}\in\xpar{nx-1,nx+n-1}}	
\\& 
=F_{n-2}(nx+n-1) - F_{n-2}(nx-1)
\\&
=\P\bigpar{0\le\EU_{n-2,nx}\le n-1}
\\&
=\P\bigpar{\floor{nx}\le  \EU_{n-2,\frax{nx}} < \floor{nx}+n}.
  \end{split}
\end{equation}
This can also by \eqref{efdist} be expressed in the \EFx numbers: 
\begin{equation}\label{qma}
\ftd(x)
=\frac1{(n-2)!}\sum_{j=0}^{n-1}A_{n-2,\floor{nx}+j,\frax{nx}}.
\end{equation}

By \eqref{job}, $|\hgd|\le 1-1/n<1$; in particular, $\ftd(x)=0$ for
$x\notin(-1,1)$, as also can be seen from \eqref{qme} or \eqref{qma}.
Note also that, for every $x$,
\begin{equation}
  \sum_{i=-\infty}^\infty \ftd(x+i) 
=  \sum_{i=-\infty}^\infty 
\P\bigpar{\floor{nx}+ni\le  \EU_{n-2,\frax{nx}} < \floor{nx}+ni+n}
=1.
\end{equation}
(This means that $\frax{\hgd}$ is uniformly distributed on $\oi$, which
also easily is seen directly from \eqref{job}.)
It follows that, for all $x\in\oi$ and $k\in\bbZ$,
\begin{equation}\label{qj}
  \P\bigpar{\floor{\hgd}=k\mid\frax{\hgd}=x}
=\frac{\ftd(k+x)}{  \sum_{i=-\infty}^\infty \ftd(x+i) }=\ftd(k+x).
\end{equation}

Of course, the fractional part
\begin{equation}
  \frax{\xgD_1}=\frax{-Np_1}=1-\frax{Np_1}
\end{equation}
(unless $Np_1$ is an integer). Thus, for $x\in[0,1)$ and a small $dx>0$,
with $y\=1-x$,
\begin{equation*}
\frax{\xgD_1}\in(x,x+dx)\iff\frax{Np_1}\in(y-dx,y).   
\end{equation*}
Conditioned on this
event, for $k\in\bbZ$,
\begin{equation}\label{qn}
  \begin{split}
  \P\bigpar{\floor{\xgD_1}=k\mid \frax{\xgD_1}\in(x,x+dx)}
&= \frac{\P(\xgD_1\in(k+x,k+x+dx))}{\P(\frax{\xgD_1}\in(x,x+dx))}
\\&
\to \frac{\P(\hgd\in(k+x,k+x+dx))}{\P(\frax{\hgd}\in(x,x+dx))}
  \end{split}
\end{equation}
where the \rhs{} as $dx\to0$ converges to, see \eqref{qj},
\begin{equation}\label{qo}
  \P\bigpar{{\hgd}=k+x\mid\frax{\hgd}=x}
=\ftd(k+x).
\end{equation}
This leads to the following result.

\begin{theorem}
  Let $p_1\in(0,1)$ be fixed and suppose that $(p_2,\dots,p_n)$ have an
  absolutely continuous distribution in the simplex 
$\set{(p_i)_2^n\in\bbR_+^{n-1}:\sum_2^np_i=1-p_1}$, where $n\ge3$.
Then, for the method of greatest remainder, as $N\to\infty$, 
\begin{equation}\label{tglo}
  \begin{split}
  \P\bigpar{\xgD_1=k-\frax{Np_1}} 
&=\ftd\bigpar{k-\frax{Np_1}}+o(1)
  \end{split}
\end{equation}
with $\ftd$ given by \eqref{qme}--\eqref{qma},
for every $k\in\bbZ$.
Thus,
\begin{align*}
  \P\bigpar{\xgD_1=-\frax{Np_1}} 
&=\P\bigpar{ \EU_{n-2,\frax{-nNp_1}} \le n-\ceil{n\frax{Np_1}}}
+o(1),
\\
 \P\bigpar{\xgD_1=1-\frax{Np_1}} 
&=\P\bigpar{ \EU_{n-2,\frax{-nNp_1}} > n-\ceil{n\frax{Np_1}}}
+o(1).	
\end{align*}
\end{theorem}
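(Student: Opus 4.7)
\emph{Proof plan.}
The plan is to apply the conditional computation \eqref{qn}--\eqref{qo} directly in the fixed-$p_1$ regime. Since $p_1$ is fixed, $\frax{Np_1}$ is deterministic for each $N$ (the exceptional case $Np_1\in\bbZ$ being handled by continuity of $\ftd$), so $\xgD_1=s_1-Np_1$ is supported on the shifted lattice $\bbZ-\frax{Np_1}$ of spacing~$1$, and the event $\set{\xgD_1=k-\frax{Np_1}}$ coincides with $\set{\floor{\xgD_1}=k-1}$, i.e., $\set{s_1=k+\floor{Np_1}}$. Since the method of greatest remainder forces $s_1\in\set{\floor{Np_1},\floor{Np_1}+1}$, the probability vanishes for $k\notin\set{0,1}$, consistent with $\ftd$ being supported in $(-1,1)$.

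The convergence \eqref{tglo} follows from two ingredients. First, by a conditional version of \cite[Lemma~C.1]{SJ262}, the joint fractional parts $(\frax{Np_i})_{i=2}^n$ equidistribute (subject to the integer-sum constraint $\sum_i\frax{Np_i}\in\bbZ$ forced by $p_1+\dots+p_n=1$) as $N\to\infty$. Thus the argument leading to \eqref{job} in \cite{SJ262} applies here, now with $\frax{Np_1}$ treated as a deterministic phase rather than as an averaged-out uniform. This gives, for each $k\in\bbZ$, that $\P(\xgD_1=k-\frax{Np_1})$ converges to the atom mass at $k-\frax{Np_1}$ of the conditional distribution of $\hgd$ given $\frax{\hgd}=1-\frax{Np_1}$. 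Second, this conditional atom mass is precisely $\ftd(k-\frax{Np_1})$ by \eqref{qj}--\eqref{qo}. Combining these two points yields \eqref{tglo}; the explicit formulas for $k=0,1$ then follow by evaluating $\ftd$ via \eqref{qme}, using the identity $\EU_{n-2,nx}=\EU_{n-2,\frax{nx}}-\floor{nx}$ from \eqref{efreal} with $x=-\frax{Np_1}$ and $x=1-\frax{Np_1}$ respectively, and observing that the extreme of the resulting index range is cut off because $\EU_{n-2,\cdot}$ takes values in $\set{0,\dots,n-2}$.

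The main obstacle is the first ingredient: the weak-convergence argument in \cite{SJ262} is stated with $(p_1,\dots,p_n)$ absolutely continuous on the full simplex, and must be adapted to the conditional situation where $p_1$ is prescribed. Because the fractional parts $(\frax{Np_i})_{i=1}^n$ satisfy the integer-sum constraint, one cannot simply invoke an independent Weyl equidistribution for $(\frax{Np_i})_{i=2}^n$; the constraint must be tracked carefully, and one must verify that the convergence is uniform in the phase $\frax{Np_1}\in\oi$, so that it applies to the $N$-dependent value $\frax{Np_1}$ at each scale. Once these points are in place, the local-limit passage to \eqref{tglo} is automatic, since the lattice structure of $\xgD_1$ is preserved exactly at each $N$ and matches the discrete conditional support of the limit.
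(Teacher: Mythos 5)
Your plan correctly identifies the overall structure --- equidistribution of the fractional parts, the fact that $\xgD_1$ is an \aex{} continuous function of the fractional parts with $\frax{Np_1}$ entering as a deterministic phase, and the identification of the limit with the conditional law of $\hgd$ given its fractional part via \eqref{qj}--\eqref{qo} --- and this is indeed the route the paper takes. But the step you yourself label ``the main obstacle'' is precisely the content of the proof, and you leave it unresolved: you assert that ``the argument leading to \eqref{job} applies here, now with $\frax{Np_1}$ treated as a deterministic phase,'' and that the limit is the conditional distribution of $\hgd$ given $\frax{\hgd}=1-\frax{Np_1}$, but you give no argument for why the subsequential limit law is that particular conditional law, and you explicitly defer the verification of uniformity in the phase and the handling of the integer-sum constraint. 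A reader cannot reconstruct the proof from this outline.

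The paper closes this gap with two concrete devices that are absent from your plan. First, the $N$-dependence of the phase is handled by a compactness/subsequence argument: if \eqref{tglo} failed for some $k$, one could pass to a subsequence along which $\frax{Np_1}\to y$; along any such subsequence $\xgD_1\dto Y(y)\=h(y,U_2,\dots,U_{n-1},\frax{-y-U_2-\dots-U_{n-1}})$ by the continuous mapping theorem. Here only the $n-2$ free coordinates $i=2,\dots,n-1$ are used, so no ``conditional version'' of \cite[Lemma C.1]{SJ262} is needed --- the constrained coordinate $\frax{Np_n}$ is simply written as a function of the others and the phase, which disposes of the integer-sum constraint you worry about. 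Second, and crucially, the law of $Y(y)$ is identified not by redoing the computation behind \eqref{job} with a deterministic phase, but by an embedding trick: let $p_1$ be uniform on a small interval (rescaling the other $p_i$), condition on $\frax{Np_1}\in(y-dx,y)$, apply the already-proved randomized relation \eqref{qn}, observe that its left-hand side is asymptotically the average of $\P(\floor{Y(z)}=k)$ over $z\in(x,x+dx)$ with $x=1-y$, and let $dx\to0$ using the continuity of the law of $Y(z)$ in $z$ to obtain $\P(Y(y)=k+1-y)=\ftd(k+1-y)$. Without this step (or some substitute, such as a direct computation of the law of $h(y,U_2,\dots)$), the identification of the limit with $\ftd(k-\frax{Np_1})$ is asserted rather than proved, so the argument as written has a genuine gap at its central point.
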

The result \eqref{tglo} is trivial unless $k\in\setoi$, since the
probability 
is 0 otherwise.

\begin{proof}
  Let $X_i\=Np_i$. Now $X_1$ is deterministic, but the fractional
  parts $(\frax{X_i})_{i=2}^{n-1}$ converge to independent uniform random
  variables $(U_i)_{i=2}^{n-1}$ as in \refS{Sval}. 
The seat bias $\xgD_1$ depends only
  on the 
  fractional parts $\frax{X_i}$, and it is an \aex{} continuous function 
$h(\frax{X_1},\dots,\frax{X_n})$ of
  them, and it follows that for any subsequence of $N$
with $\frax{X_1}=\frax{Np_1}\to
  y\in[0,1]$, 
\begin{equation}
  \label{qi}
\xgD_1\dto
  Y(y)\=h(y,U_2,\dots,U_{n-1},\frax{-y-U_2-\dots-U_{n-1}}).
\end{equation} 
Moreover, the distribution of $Y(y)$ depends continuously on $y\in[0,1]$,
with $Y(0)=Y(1)$.

If we now would let $p_1$ be random and uniform in some small interval,
and scale $(p_2,\dots,p_n)$ correspondingly, and then condition on
$\frax{Np_1}\in(y-dx,y)$, for $y\in(0,1]$,
then \eqref{qn} would apply. The left hand side of
\eqref{qn} is asymptotically the average of $\P(\floor{Y(z)}=k)$ for
$z\in(x,x+dx)$ with $x=1-y$, and by the continuity of the distribution of
$Y(z)$, we can let $dx\to0$ and conclude from \eqref{qn} and \eqref{qo} that
$Y(y)$ has the distribution in \eqref{qo}, \ie,
\begin{equation}\label{qf}
  \P(Y(y)=k+1-y)=\ftd(k+1-y).
\end{equation}

If \eqref{tglo} would not hold, for some $k$, then there would be a
subsequence such that 
$\P\bigpar{\xgD_1=k-\frax{Np_1}}$ converges to a limit different from 
$\ftd\bigpar{k-\frax{Np_1}}$. We may moreover assume that $\frax{Np_1}$
converges to some $y\in\oi$, but then \eqref{qi} and \eqref{qf} would
yield a contradiction. This shows \eqref{tglo}.

The final formulas follow by taking $k=0,1$ in \eqref{tglo} and using
\eqref{qme}. 
\end{proof}

\begin{remark}
  For other quota methods,
\cite[Theorems 3.13 and 6.1]{SJ262} provide similar results. In particular,
for Droop's method, \eqref{job} is replaced by
\begin{equation}\label{job1}
 \xgD_1\dto p_1-\frac1n+\hgd,
\end{equation}
with $\hgd$ as in \eqref{job}, and thus the argument above shows that 
\eqref{tglo} is replaced by 
\begin{equation}\label{tglo1}
  \begin{split}
  \P\bigpar{\xgD_1=k-\frax{Np_1}} 
&=\ftd\bigpar{k-\frax{Np_1}-p_1+1/n}+o(1).
  \end{split}
\end{equation}
There are also similar results for divisor methods,
see \cite[Theorems 3.7 and 6.1]{SJ262}. In particular, for Sainte-Lagu\"e's
method, 
\begin{equation}
 \xgD_1\dto  \tU_0+p_1\tS_{n-2};
\end{equation}
this too can be expressed using \EFx distributions or
\EFx numbers as above, but
the result is more complicated and omitted.
\end{remark}

\appendix

\section{\EFx polynomials}\label{AEF}

We collect in this appendix some known facts for easy reference.
(Some are used above; others are included because we find them interesting
and perhaps illuminating.)
We note first that the sum in \eqref{ef} is absolutely convergent for
$|x|<1$, and that the second equality holds there (or as an equality of
formal power series).
The first equality in \eqref{ef} (which is valid for all complex $x\neq1$)
can be written
as a ``Rodrigues formula''
\begin{equation}
P_{n,\rho}(x)=
(1-x)^{n+1}\Bigpar{\rho+x\ddq{x}}^n\frac1{1-x}
,
\end{equation}
which yields the recursion
\begin{equation}
P_{n,\rho}(x)=
(1-x)^{n+1}\Bigpar{\rho+x\ddq{x}}\Bigpar{P_{n-1,\rho}(x)\xpar{1-x}^{-n}}
,
\qquad n\ge1;
\end{equation}
after expansion, this yields \eqref{efrec}.

We note that by the recursion \eqref{efrec} and induction
\begin{equation}
  A_{n,0,\rho}=\pnrho(0) =\rho^n,
\end{equation}
and
\begin{equation}\label{pnrho1}
  \sumkn A_{n,k,\rho}=\pnrho(1) =n!;
\end{equation}
moreover, by \eqref{efarec} and induction,
\begin{equation}
  A_{n,n,\rho} =(1-\rho)^n.
\end{equation}
In particular, if $\rho\neq1$, then $A_{n,n,\rho}\neq0$ and $\pnrho$ has
degree exactly $n$ for every $n$.

The case $\rho=1$ is special; in this case $A_{n,n,\rho}=0$ for $n\ge1$, so
$\pnrho$ has degree $n-1$ for $n\ge1$. In fact, if follows directly from
\eqref{ef} that
\begin{equation}\label{pn01}
  P_{n,0}(x)=xP_{n,1}(x),
\qquad n\ge1;
\end{equation}
hence, as said in the introduction,
the Eulerian numbers appear twice as
\begin{equation}\label{a01}
  A_{n,k+1,0}=A_{n,k,1}, \qquad n\ge1.
\end{equation}

A binomial expansion in \eqref{ef} shows that $\pnrho$ can be expressed
using the classical special case $\rho=0$ as
\begin{equation}\label{q21}
  \pnrho(x)=\sumin \binom ni \rho^{i}(1-x)^{i} P_{n-i,0}(x),
\end{equation}
which shows that $\pnrho(x)$, and thus also each $A_{n,k,\rho}$, is a
polynomial in $\rho$ of degree at most $n$. 
By \eqref{q21}, the $n$:th degree term is $(1-x)^n\rho^n$; hence the
degree in $\rho$ is exactly $n$ for $\pnrho(x)$ for any $x\neq1$ (recall that
$\pnrho(1)=n!$ does not depend on $\rho$) and for $A_{n,k,\rho}$ for any
$k=0,\dots,n$. (The leading term of $A_{n,k,\rho}$ is $(-1)^k\binom nk
\rho^n$.) 

Similarly, another binomial expansion in \eqref{ef} yields
\begin{equation}\label{q211}
  \begin{split}
\pnrho(x) & =\sumin \binom ni (\rho-1)^{i}(1-x)^{i} P_{n-i,1}(x)
\\&
 =\sumin \binom ni (1-\rho)^{i}(x-1)^{i} P_{n-i,1}(x).	
  \end{split}
\end{equation}
For $\rho=0$, this yields, using \eqref{pn01}, the recursion formula used by
\citet[(6.) p.~826]{Frobenius} to define the Eulerian polynomials.


The definition \eqref{ef} yields (and is equivalent to) the generating
function
\begin{equation}\label{gf}
  \sumn \frac{\pnrho(x)}{(1-x)^{n+1}}\frac{z^n}{n!}
=\sumn\sumj(j+\rho)^n x^j \frac{z^n}{n!}
=\sumj e^{jz+\rho z} x^j
=\frac{e^{\rho z}}{1-xe^z}
\end{equation}
or, equivalently,
\begin{equation}\label{gf1}
  \sumn{\pnrho(x)}\frac{z^n}{n!}
=\frac{(1-x)e^{\rho z(1-x)}}{1-xe^{z(1-x)}}.
\end{equation}
(See also \citet[pp.~150--152]{DavidB} and 
\citet[Example III.25,  p.~209]{FlajoletS} for the case $\rho=1$.) 
In particular, for the classical case $\rho=1$,
\eqref{gf} can be written
\begin{equation}\label{gf2}
  \frac{1-x}{e^z-x}=\frac{(1-x)e^{-z}}{1-xe^{-z}}
=  \sumn \frac{P_{n,1}(x)}{(1-x)^{n}}\frac{(-z)^n}{n!}
=  \sumn \frac{P_{n,1}(x)}{(x-1)^{n}}\frac{z^n}{n!},
\end{equation}
discovered by Euler \cite[\S 174, p.~391]{E212};
this is sometimes taken as a definition, see \eg{} 
\citet[p.~39]{Riordan} and
\citet{Carlitz}.
More generally, we similarly obtain, \cf{} \eqref{carl} and \cite{Carlitz},
\begin{equation}
\sumn \frac{P_{n,1-u}(x)}{(x-1)^n} \frac{z^n}{n!}
=e^{zu} \frac{1-x}{e^z-x}.
\end{equation}


From \eqref{gf} one easily obtains the symmetry relation
\begin{equation}
  \label{psymm}
x^n P_{n,\rho}(x\qw)=P_{n,1-\rho}(x),
\end{equation}
or equivalently, by \eqref{efa},
\begin{equation}
  \label{asymm}
A_{n,n-k,\rho}=A_{n,k,1-\rho},
\end{equation}
which also easily is proved by induction.
In terms of the random variables $\EUrho $ defined in \eqref{efdist}, this
can be written
\begin{equation}
  \label{EUsymm}
\EU_{n,1-\rho}\eqd n-\EU_{n,\rho}.
\end{equation}

\begin{remark}\label{RPsymm}
If we define the 
homogeneous  two-variable polynomials
  \begin{equation}\label{hP}
\hP_{n,\rho}(x,y)\=\sumkn \anrk  x^k y^{n-k}, 
\end{equation}
so that 
$\hP_{n,\rho}(x,y)=y^nP_{n,\rho}(x/y)$
and $P_{n,\rho}(x)=\hP_{n,\rho}(x,1)$,
the symmetry \eqref{psymm}--\eqref{asymm} takes the form
$\hP_{n,\rho}(x,y)=\hP_{n,1-\rho}(y,x)$. 
(In particular, in the classical case $\rho=1$, this together with
\eqref{pn01} shows that $y\qw\hP_{n,1}(x,y)$ is a symmetric homogeneous
polynomial of degree $n-1$ \cite{Frobenius}.)
The recursion \eqref{efrec} becomes
\begin{equation}
  \hP_{n,\rho}(x,y)
= \Bigpar{(1-\rho)x+\rho y
 +xy\frac{\partial}{\partial x}+xy\frac{\partial}{\partial y}}
\hP_{n-1,\rho}(x,y),
\qquad n\ge1.
\end{equation}
\end{remark}

For $x<1$, the sum in \eqref{ef} can be differentiated in $\rho$ termwise
(for all $\rho\in\bbC$), which yields, for $n\ge1$,
\begin{equation}
\frac{\partial}{\partial\rho}\frac{P_{n,\rho}(x)}{(1-x)^{n+1}}
=
\sumj n (j+\rho)^{n-1} x^j
=
\frac{nP_{n-1,\rho}(x)}{(1-x)^{n}}
\end{equation}
and thus (for all $x$, since we deal with polynomials)
\begin{equation}\label{fys}
  \frac{\partial}{\partial\rho}{P_{n,\rho}(x)}
= n(1-x)P_{n-1,\rho}(x),
\qquad n\ge1.
\end{equation}
Equivalently, by \eqref{efa},
\begin{equation}\label{kem}
    \frac{\partial}{\partial\rho}{A_{n,k,\rho}}
= n\bigpar{A_{n-1,k,\rho}-A_{n-1,k-1,\rho}}, 
\qquad n\ge1.
\end{equation}

\begin{remark}\label{Rroots}
As remarked by  \citet{Frobenius} in the case $\rho=1$,
 it follows from the recursion \eqref{efrec} that if $0<\rho\le1$, the roots
 of $\pnrho$ are real, negative and simple, see \cite{terMorsche}. 
(To see this, use induction and consider the values of $P_{n,\rho}$ at the
 roots of $P_{n-1,\rho}$, and at $0$ and $-\infty$; 
it follows from \eqref{efrec} that these values will be of
   alternating signs, and thus there must be roots of $P_{n,\rho}$ between
   them, and this accounts for all roots of $P_{n,\rho}$.
We omit the details.
The argument also shows that the roots of $P_{n,\rho}$ and $P_{n-1,\rho}$
are interlaced.
For more general results of this kind, see \eg{} \cite{WangYeh} and
\cite[Proposition 3.5]{LiuWang}.) 
Recall that if $0<\rho<1$ there are $n$ roots, and if $\rho=1$ only $n-1$
(for $n\ge1$); in this case we can regard $-\infty$ as an additional root.
Furthermore, by \eqref{pn01} this extends to $\rho=0$: $P_{n,0}$ has 
$n$ roots which are simple, real and non-positive, with 0 being a root in
this case (for $n\ge1$).

Hence, for $0\le\rho\le1$ and $n\ge1$, $\pnrho$ has $n$ roots
$-\infty \le -\gl_{n,n}<\dots<-\gl_{n,1}\le0$. It follows that the
\pgf{} \eqref{pgf} of $\EUrho$ can be written as
\begin{equation}
  \E x^{\EUrho} = \frac{\pnrho(x)}{\pnrho(1)}
=\prod_{j=1}^n\frac{x+\gl_{n,j}}{1+\gl_{n,j}},
\end{equation}
which shows that 
\begin{equation}\label{pb}
  \EUrho\eqd\sumjin I_j
\end{equation}
where $I_j\sim\Be(1/(1+\gl_{n,j}))$ are independent indicator variables.
This stochastic representation can be used to show asymptotic 
properties of the \EFx numbers from standard results for sums of
independent random variables, see 
\cite{Carlitz-asN} ($\rho=1$),
\cite{GawronskiN} 
and \refS{SCLT}.

The fact that all roots of $P_{n,\rho}$ are real implies further that the
sequence $A_{n,k,\rho}$, $k=0,\dots,n$, is log-concave
(\cite[Theorem 53, p.~52]{HLP}; see also Newton's inequality
\cite[Theorem 51, p.~52]{HLP}).
In particular, the sequence is unimodal.
In other words, the distribution of $\EUrho$ is log-concave and unimodal.

Further results on the asymptotics and distribution of the roots of $\pnrho$
are given in  \eg{}
\cite{FaldeyG},
\cite{Gawronski-asymp},
\cite{Reimer:main},
\cite{GawronskiS}.
\end{remark}

\begin{remark}\label{Reuler}
 The Eulerian polynomials and numbers should not be confused with the 
\emph{Euler polynomials} and \emph{Euler numbers}, but there are well-known
connections. (See \eg{} \cite[\S\S 8, 17]{Frobenius};
see also \cite[\S24.1]{NIST} and the references there for some historical
remarks on names and notations.)
First,
  the \emph{Euler polynomials} $E_n(x)$ are defined by their generating
  function \cite[\S24.2]{NIST}
  \begin{equation}\label{ep1}
	\sumn E_n(x)\frac{z^n}{n!} = \frac{2e^{xz}}{e^z+1}.
  \end{equation}
Taking $x=-1$ in \eqref{gf} we see that
\begin{equation}\label{ep2}
  E_n(\rho)=2^{-n}P_{n,\rho}(-1)=2^{-n}\sumkn A_{n,k,\rho}(-1)^k.
\end{equation}
Similarly, the \emph{Euler numbers} $E_n$
\cite[A000364]{OEIS}, 
\cite[\S24.2]{NIST}, 
which are defined as the
coefficients in the Taylor (Maclaurin) series
\begin{equation}
\frac1{\cosh t} 
=\frac{2e^t}{e^{2t}+1}
= \sumn E_n \frac{t^n}{n!}
\end{equation}
are by \eqref{ep1}--\eqref{ep2} given by
\begin{equation}\label{En}
  E_n = 2^n E_n(1/2) = P_{n,1/2}(-1).
\end{equation}
The Euler numbers $E_n$ vanish for odd $n$, and the numbers $E_{2n}$
alternate in sign, $E_{2n}=(-1)^n|E_{2n}|$; the positive numbers $|E_{2n}|$
are also known as \emph{secant numbers} since they are the coefficients in
\cite[p.~432]{E212}
\begin{equation}
\sec t \= \frac1{\cos t} = \sumn |E_{2n}| \frac{t^{2n}}{(2n)!}.  
\end{equation}

Furthermore, returning to the classical case (Euler's case)
$\rho=1$ and taking $x=\ii$ in \eqref{gf} we find
\begin{equation*}
  \begin{split}
\sumn\frac{P_{n,1}(\ii)}{(1-\ii)^{n+1}}\frac{t^n}{n!}
&=
\frac{e^t}{1-\ii e^t}
=  \frac{e^t(1+\ii e^t)}{1+e^{2t}}
=\frac\ii2+ \frac\ii2 \frac{e^{2t}-1}{e^{2t}+1}+ \frac{ e^t}{e^{2t}+1}
\\ &
=\frac\ii2+ \frac\ii2 \tanh t+ \frac 1{2 \cosh t}
\\ &=
\frac{\ii}2
+\frac{\ii}2\summ (-1)^m T_{2m+1} \frac{t^{2m+1}}{(2m+1)!}
+\frac{1}2\summ E_{2m} \frac{t^{2m}}{(2m)!},
  \end{split}
\end{equation*}
where $T_n$ are the \emph{tangent numbers} 
\cite[A000182]{OEIS}, \cite[\S24.15]{NIST}
defined as the coefficients in
the Taylor (Maclaurin) series 
\begin{equation}
\tan t = \sumn T_n \frac{t^n}{n!} = \summ T_{2m +1}\frac{t^{2m+1}}{(2m+1)!}
;
\end{equation}
note that
$T_n=0$ when $n$ is even.
Hence, for $n\ge1$,
\begin{equation}\label{pi}
  P_{n,1}(\ii)=\sum_{k=0}^{n-1}\euler nk \ii^k
=
\begin{cases}
 2^m\ii^m  T_{n}, & n=2m+1, \\
(1-\ii)2^{m-1}(-\ii)^m E_{n},
& n=2m.
\end{cases}
\end{equation}
For even $n$ we can also write this as
\begin{equation}\label{pia}
P_{2m,1}(\ii)
  =(\ii^m-\ii^{m+1})2^{m-1}|E_{2m}|.
\end{equation}
For odd $n$ we can use the relation  \cite[24.15.4]{NIST}
\begin{equation}\label{tb}
  T_{2m-1}=(-1)^{m-1}\frac{2^{2m}(2^{2m}-1)}{2m}B_{2m}
\end{equation}
with
the \emph{Bernoulli numbers} $B_{2m}$
\cite[\S24.2]{NIST} 
and write \eqref{pi} 
as 
\begin{equation}
  P_{n,1}(\ii)=(-2i)^{m}\frac{2^{n+1}(2^{n+1}-1)}{n+1}B_{n+1},
\qquad n=2m+1.
\end{equation}

Similarly, taking $\rho=1$ and $x=-1$ in \eqref{gf}
we find 
\begin{equation*}
  \begin{split}
\sumn\frac{P_{n,1}(-1)}{2^{n+1}}\frac{t^n}{n!}
&=
\frac{e^t}{e^t+1}
=\frac12+ \frac12 \frac{e^{t}-1}{e^{t}+1}
=
\frac12 + \frac12\tanh \frac t2
\\ &=
\frac12+\frac{1}2\summ (-1)^{m}T_{2m+1} \frac{(t/2)^{2m+1}}{(2m+1)!}.
  \end{split}
\end{equation*}
Hence, for $n\ge1$,
\begin{equation}
P_{n,1}(-1) 
= \sum_{k=0}^{n-1}(-1)^k\euler nk 
=   (-1)^{(n-1)/2}T_n
=
\begin{cases}
  (-1)^m T_{n}, & n=2m+1, \\
0, & n=2m,
\end{cases}
\end{equation}
which for odd $n$ can be expressed in $B_{n+1}$ using \eqref{tb}.

Frobenius \cite{Frobenius} studied also $P_{n,1}(\zeta)$ for other roots of
  unity   $\zeta$, using the name 
\emph{Euler numbers of the $m$th order} 
for $P_{n,1}(\zeta)/(\zeta-1)^m$ 
when
  $\zeta$ is a primitive $m$th root of unity (see also
  \cite[p.~163]{Sylvester}); the standard Euler numbers above are the case 
$m=4$ ($\zeta=\ii$) apart from a factor $1+\ii$, see \eqref{pi}--\eqref{pia}.
\end{remark}

\begin{remark}
  For $p\in(0,1)$, \eqref{ef} can be rewritten as
  \begin{equation}
\frac{P_{n,\rho}(p)}{(1-p)^{n}}
=
\sumj (j+\rho)^n (1-p)p^j
=\E (X_p+\rho)^n,	
  \end{equation}
where $X_p\sim\Ge(p)$ has a geometric distribution. In particular,
the moments of a geometric distribution are, using \eqref{pn01},
for $n\ge1$,
\begin{equation}
\E X_p^n = (1-p)^{-n}{P_{n,0}(p)}
= p(1-p)^{-n}{P_{n,1}(p)},
\end{equation}
and the central moments are, using \eqref{psymm},
\begin{equation}
  \begin{split}
\E\xpar{X_p-\E X_p}^n
&=
\E\Bigpar{X_p-\frac{p}{1-p}}^n
=
\frac{P_{n,-p/(1-p)}(p)}{(1-p)^{n}}
\\&
=\Bigparfrac{p}{1-p}^n{P_{n,1/(1-p)}\Bigparfrac1p}.
  \end{split}
\end{equation}
In particular, for $p=1/2$ we have the moments
$\E X_{1/2}^n = 2^{n-1}P_{n,1}(1/2)$ ($n\ge1$)
\cite[A000670]{OEIS} 
(\emph{numbers of preferential arrangements}, 
also called \emph{surjection  numbers}; 
see further \eg{} \cite[Exercise 7.44]{CM} and \cite[II.3.1]{FlajoletS})
and the central moments
$\E (X_{1/2}-1)^n = P_{n,2}(2)$ \cite[A052841]{OEIS}. 
\end{remark}

\begin{remark}
\citet{Benoumhani} studied polynomials related to \EFx
  polynomials. His $F_m(n,x)$ can be expressed as
  \begin{equation}
F_m(n,x)=m^n(1+x)^n P_{n,1/m}\parfrac{x}{1+x}
=\sumkn m^n A_{n,k,1/m} x^k (1+x)^{n-k}.
  \end{equation}
\end{remark}

\section{Splines}\label{Aspline}

As said in \refS{SSn}, the density function $f_n$ is continuous for $n\ge2$,
while $f_1$ has jumps at $0$ and $1$. 
More generally, $f_n$ is $n-2$ times continuously differentiable, 
while $f_n^{(n-1)}$ has jumps at the integer points $0,\dots,n$;
furthermore, $f_n$ is a polynomial of degree $n-1$ in any interval
$(k-1,k)$.
Such functions are called 
\emph{splines} of degree $n-1$, with knots at the integers, see \eg{}
\cite{Schoenberg46, Schoenberg,Schumaker}.
Hence $f_n$ 
is a {spline} of degree $n-1$, with knots at the integers, which
moreover
vanishes outside $[0,n]$; in this context, $f_n$ is known as a
\emph{B-spline}, see \eg{} \cite[Lecture 2]{Schoenberg}.
Here ``B'' stands for basis,
since translates of $f_n$ form a
basis in the linear space $\cS_{n-1}$
of all splines of degree $n-1$ with integer knots
\cite{Schoenberg46,Schoenberg}; 
in other words, every spline $g\in \cS_n$ can be written
\begin{equation}
  \label{bspline}
g(x)=\sumkoooo c_k f_{n+1}(x-k)
\end{equation}
for a unique sequence $(c_k)_{-\infty}^\infty$ of complex numbers
(or real numbers, if we consider real splines),
and conversely, every such sum gives a spline in $\cS_n$. (The sum converges
trivially pointwise.)
The interpretation of the B-spline as the density function of $S_n$ was
observed already by \citet[3.17]{Schoenberg46}.

By \refT{T3}, the values of the B-spline are given by the \EFx
numbers; equivalently, the B-splines satisfy the recursion formula
\eqref{frec}, which is well-known in this setting 
 \cite[(4.52)--(4.53)]{Schumaker}. 

A related construction,
see \eg{} \cite{Schoenberg},
is the \emph{exponential spline},
defined by taking $c_k=t^k$ in \eqref{bspline} for some complex $t\neq0$, \ie,
\begin{equation}\label{expspline}
  \Phi_n(x;t)\=\sumkoooo t^k f_{n+1}(x-k) =\sumkoooo t^{-k} f_{n+1}(x+k),
\end{equation}
which is a spline of degree $n$ satisfying
\begin{equation}\label{sigr}
  \Phi_n(x+1;t)= t \Phi_n(x;t)
\end{equation}
(and, up to a constant factor, the only such spline).
By \eqref{sigr}, $\Phi_n(x;t)$ is determined by its restriction to \oi, 
and
by \eqref{expspline}, \eqref{t3}, \eqref{efa} and \eqref{psymm}
we have for $0\le x\le 1$,
\begin{equation}\label{phip}
  \Phi_n(x;t) = \frac1{n!}\sumkoooo A_{n,k,x} t^{-k}
= \frac1{n!} P_{n,x}(t\qw)
= \frac{t^{-n}}{n!} P_{n,1-x}(t).
\end{equation}
Note that here we rather consider $P_{n,1-x}(t)$ as a polynomial in $x$,
with a parameter $t$, instead of the opposite as we usually do.

These and other relations betweens splines and \EFx polynomials
have been 
known and used for a long time,
see \eg{}
\cite{He,MeinardusMerz,
Merz1, Merz2,
Plonka,
Reimer:extremal,
Reimer:main,
ReimerS,
Schoenberg,
Siepmann-Dr,
Siepmann,
terMorsche,
terMorsche:relations,
WangXuXu}. 
We give a few further examples; see the references
just given for details and further results.

First, consider the following interpolation problem: 
Let $\gl\in\oi$ be given and find a spline
$g\in \cS_n$ such that
\begin{equation}\label{interpol}
  g(k+\gl)=a_k, \qquad k\in\bbZ,
\end{equation}
for a given sequence $(a_k)_{-\infty}^\infty$.
It is not difficult to see that this problem always has a solution, and that
the space of solutions has dimension $n$ if $\gl\in(0,1)$ and $n-1$ if
$\gl\in\setoi$. (If $0<\gl<1$, we may \eg{} choose $c_{-n+1},\dots,c_0$
arbitrarily, and then choose $c_1,c_2,\dots$ and $c_{-n},c_{-n-1},\dots$
recursively so that \eqref{interpol} holds; the case $\gl=0$ or 1 is similar.)
Moreover, the null space, \ie, the space of splines $g\in\cS_n$ such that
$g(k+\gl)=0$ for all integers $k$, contains by \eqref{sigr} every
exponential spline $\Phi_n(x;t)$ such that $\Phi_n(\gl,t)=0$; by \eqref{phip}
this is equivalent to $P_{n,1-\gl}(t)=0$.
Since $\Phi_{n,1-\gl}$ has $n$ non-zero roots if $\gl\in(0,1)$ and $n-1$
if $\gl\in\setoi$, 
see \refR{Rroots}, the exponential splines $\Phi_n(x;t_i)$, where $t_i$ is a
non-zero
root of $P_{n,1-\gl}$, form a basis of the null space of \eqref{interpol}.
Note that these roots $t_i$ are real and negative by \refR{Rroots}.

The cases $\gl=0$ and $\gl=1/2$ are particularly important, which explains
the importance of $P_{n,1}(x)$ and $P_{n,1/2}(x)$ and the corresponding
Eulerian numbers $A_{n,k,1}$ and the Eulerian numbers of type B
$B_{n,k}=2^nA_{n,k,1/2}$ in spline theory.

Similarly, one may consider the periodic interpolation problem, considering
only functions and sequences with a given period $N$. By simple Fourier
analysis, 
if $\go_N=\exp(2\pi\ii /N)$, then
the exponential splines $\Phi(x,\go_N^j)$, $j=1,\dots,N$,
form a basis of the $N$-dimensional 
space of periodic splines in $\cS_n$;
here $\go_N^j=\exp(2\pi\ii j/N)$ ranges over the $N$:th unit roots.
Moreover, 
\eqref{interpol} has a unique
periodic solution for every periodic sequence $(a_k)_{-\infty}^\infty$ if
and only if  
none of these exponential splines vanishes at $\gl$, \ie, if and only if
$P_{n,1-\gl}(\go_N^j)\neq0$ for all $j$. Since the roots of $P_{n,1-\gl}$ 
lie in $(-\infty,0]$, the only possible problem is for $-1$, so this holds
always if $N$ is odd, and if $N$ is even unless $P_{n,1-\gl}(-1)=0$;
by \eqref{ep2} and standard properties of the Euler polynomials
\cite[\S24.12(ii), see also (24.4.26), (24.4.28), (24.4.35)]{NIST},
the
periodic interpolation problem \eqref{interpol} thus has a unique solution
except if either $N$ even, $n$ even and $\gl\in\setoi$, or 
$N$ even, $n$ odd and $\gl=1/2$.

\newcommand\AAP{\emph{Adv. Appl. Probab.} }
\newcommand\JAP{\emph{J. Appl. Probab.} }
\newcommand\JAMS{\emph{J. \AMS} }
\newcommand\MAMS{\emph{Memoirs \AMS} }
\newcommand\PAMS{\emph{Proc. \AMS} }
\newcommand\TAMS{\emph{Trans. \AMS} }
\newcommand\AnnMS{\emph{Ann. Math. Statist.} }
\newcommand\AnnPr{\emph{Ann. Probab.} }
\newcommand\CPC{\emph{Combin. Probab. Comput.} }
\newcommand\JMAA{\emph{J. Math. Anal. Appl.} }
\newcommand\RSA{\emph{Random Struct. Alg.} }
\newcommand\ZW{\emph{Z. Wahrsch. Verw. Gebiete} }
\newcommand\DMTCS{\jour{Discr. Math. Theor. Comput. Sci.} }

\newcommand\AMS{Amer. Math. Soc.}
\newcommand\Springer{Springer-Verlag}
\newcommand\Wiley{Wiley}

\newcommand\vol{\textbf}
\newcommand\Vol{\vol} 
\newcommand\jour{\emph}
\newcommand\book{\emph}
\newcommand\inbook{\emph}
\def\no#1#2,{\unskip#2, no. #1,} 
\newcommand\toappear{\unskip, to appear}

\newcommand\urlsvante{\url{http://www.math.uu.se/~svante/papers/}}
\newcommand\arxiv[1]{\url{arXiv:#1.}}
\newcommand\arXiv{\arxiv}


\begin{thebibliography}{99}   


\bibitem[Balinski and Rachev(1993)]{BalinskiR}
Michel L. Balinski  \& Svetlozar T. Rachev,
Rounding proportions: rules of rounding. 
\emph{Numer. Funct. Anal. Optim.} \vol{14} (1993), no. 5-6, 475--501. 

\bibitem[Balinski and Young(2001)]{BY}
Michel L. Balinski \& H. Peyton Young, 
\emph{Fair Representation}.
2nd ed.,
Brookings Institution Press, Washington, D.C., 2001.

\bibitem[Bender(1973)]{Bender}
Edward A. Bender,
Central and local limit theorems applied to asymptotic enumeration.
\emph{J. Combinatorial Theory Ser. A} \vol{15} (1973), 91--111.

\bibitem[Benoumhani(1997)]{Benoumhani}
Moussa Benoumhani,
On some numbers related to {W}hitney numbers of {D}owling lattices.
\emph{Adv. in Appl. Math.}, \vol{19} \no1 (1997), 106--116.

\bibitem{Brenti}
Francesco Brenti,
$q$-Eulerian polynomials arising from Coxeter groups. 
\emph{European J. Combin.} \vol{15} (1994), no. 5, 417--441. 

\bibitem[Butzer and Hauss(1993)]{ButzerH}
P. L. Butzer \& M. Hauss, 
Eulerian numbers with fractional order parameters. 
\emph{Aequationes Math.} \vol{46} (1993), no. 1-2, 119--142. 

\bibitem[Carlitz(1954)]{Carlitzq}
L. Carlitz, 
$q$-Bernoulli and Eulerian numbers.
\emph{Trans. Amer. Math. Soc.} \vol{76} (1954), 332--350. 

\bibitem[Carlitz(1959)]{Carlitz}
L. Carlitz, 
Eulerian numbers and polynomials.
\emph{Mathematics Magazine} \textbf{32} (1959), 247--260.


\bibitem[Carlitz(1960)]{Carlitz-higher}
L. Carlitz, 
Eulerian numbers and polynomials of higher order.
\emph{Duke Math. J.} \vol{27} (1960), 401--423. 

\bibitem[Carlitz(1975)]{Carlitzq2}
L. Carlitz, 
A combinatorial property of $q$-Eulerian numbers.
\emph{Amer. Math. Monthly} \vol{82} (1975), 51--54.


\bibitem[Carlitz et al(1972)]{Carlitz-asN}
L. Carlitz,  D. C. Kurtz, R. Scoville \& O. P. Stackelberg, 
Asymptotic properties of Eulerian numbers.
\emph{Z. Wahrscheinlichkeitstheorie und Verw. Gebiete} \vol{23} (1972), 47--54. 

\bibitem[Carlitz and Scoville(1974)]{Carlitz-Scoville}
L. Carlitz \&  Richard Scoville,
Generalized Eulerian numbers: combinatorial applications.
\emph{J. Reine Angew. Math.} \vol{265} (1974), 110--137.

\bibitem{ChakerianL}
Don Chakerian \& Dave Logothetti, 
Cube slices, pictorial triangles, and probability. 
\emph{Math. Mag.} \vol{64} (1991), no. 4, 219--241.


\bibitem{ChowG}
Chak-On Chow \& Ira M. Gessel, 
On the descent numbers and major indices for the hyperoctahedral group. 
\emph{Adv. in Appl. Math.} \vol{38} (2007), no. 3, 275--301.

\bibitem{cd-h}
Sylvie~Corteel \& Sandrine~Dasse-Hartaut,
\newblock Statistics on staircase tableaux, {E}ulerian and {M}ahonian
  statistics.
\newblock In {\em 23rd {I}nternational {C}onference on {F}ormal {P}ower
  {S}eries and {A}lgebraic {C}ombinatorics ({FPSAC} 2011)}, Discrete Math.
  Theor. Comput. Sci. Proc., AO:245--255,
2011.

\bibitem[Cram\'er(1938)]{Cramer}
Harald Cram\'er,   
Sur un noveau th\'eor\`eme-limite de la th\'eorie des probabilit\'es.
\emph{Les sommes et les fonctions de variables al\'eatoires},
{Actualit\'es Scientifiques et Industrielles} 736, Hermann, Paris, 1938, 
pp. 5--23.

\bibitem[Dasse-Hartaut and Hitczenko(2012+)]{d-hh}
Sandrine~Dasse-Hartaut \& Pawe{\l}~Hitczenko,
\newblock Greek letters in random staircase tableaux.
\newblock \emph{Random Struct. Algorithms}, to appear.

\bibitem[David and Barton(1962)]{DavidB}
F. N. David \& D. E. Barton, 
\emph{Combinatorial Chance.} 
Charles Griffin \& co., London, 1962.

\bibitem{DemboZeitouni}  
Amir Dembo and Ofer Zeitouni,
\book{Large Deviations Techniques and Applications}.
 2nd ed.,
Springer, New York, 1998.

\bibitem[Diaconis and Freedman(1979)]{DiaconisF}
Persi Diaconis \& David Freedman,
On rounding percentages.
\jour{J. Amer. Statist. Assoc.} \vol{74} (1979), no. 366, part 1, 359--364. 

\bibitem{Dumont}
Dominique Dumont,
Une g{\'e}n{\'e}ralisation trivari{\'e}e sym{\'e}trique des nombres eul{\'e}riens. 
\emph{J. Combin. Theory Ser. A} \vol{28} (1980), no. 3, 30--320.

\bibitem{ERS}
R. Ehrenborg  \& M. Readdy \& E. Steingr{\'i}msson, 
Mixed volumes and slices of the cube.
\emph{J. Combin. Theory Ser. A} \vol{81} (1998), no. 1, 121--126. 

\bibitem[Esseen(1985)]{Esseen}
Carl-Gustav Esseen, 
On the application of the theory of probability to two combinatorial
problems involving permutations.  
\emph{Proceedings of the Seventh Conference on Probability Theory
  (Bra\c sov, 1982)},  137--147, VNU Sci. Press, Utrecht, 1985. 


\bibitem{E55}
Leonhard Euler, 
Methodus universalis series summandi ulterius promota.
\emph{Commentarii academiae scientiarum imperialis Petropolitanae}
\vol8 (1736), 
St. Petersburg, 1741, 
pp. 147--158.
\url{http://www.math.dartmouth.edu/~euler/pages/E055.html}


\bibitem{E212} 
Leonhard Euler,  
\emph{Institutiones calculi differentialis cum eius usu in analysi finitorum ac
doctrina serierum. Vol I}.
St. Petersburg, 1755. 
\url{http://www.math.dartmouth.edu/~euler/pages/E212.html}

\bibitem{E352}
L. Euler. 
Remarques sur un beau rapport entre les s\'eries des puissances tant direct
que r\'eciproques, 
\emph{Memoires de l'Acad\'emie Royale des Sciences et des Belles-Lettres}
\vol{17}, in
\emph{Histoire de l'Acad\'emie Royale des Sciences et des Belles-Lettres de
  Berlin 1761}, 
Berlin, Haude et Spener, 1768, pp. 83--106. 
(Paper read to the academy in 1749.)
\url{http://www.math.dartmouth.edu/~euler/pages/E352.html}


\bibitem{FaldeyG}
Jutta Faldey \& Wolfgang Gawronski,
On the limit distributions of the zeros of Jonqui\`ere polynomials and
generalized classical orthogonal polynomials.  
\emph{J. Approx. Theory} \vol{81} (1995), no. 2, 231--249.

\bibitem[Feller(1957)]{FellerI}
William Feller, 
\emph{An Introduction to Probability Theory and its Applications,
Volume I}. 2nd ed., Wiley, New York, 1957.

\bibitem[Feller(1971)]{FellerII}
William Feller, 
\emph{An Introduction to Probability Theory and its Applications,
Volume II}. 2nd ed., Wiley, New York, 1971.

\bibitem[Flajolet and Sedgewick(2009)]{FlajoletS} 
Philippe~Flajolet \& Robert~Sedgewick, 
\emph{Analytic Combinatorics}.
Cambridge Univ. Press, Cambridge, UK, 2009.


\bibitem{FoataNATO}
Dominique Foata,
Distributions eul\'eriennes et mahoniennes sur le groupe des permutations. 
\emph{Higher Combinatorics (Proc. NATO Advanced Study Inst., Berlin, 1976)},
Reidel, Dordrecht, 1977,   pp. 27--49.


\bibitem{Foata}
Dominique Foata,
Eulerian polynomials: from Euler's time to the present. 
\emph{The Legacy of Alladi Ramakrishnan in the Mathematical Sciences}, 
253--273, Springer, New York, 2010.

\bibitem{FoataSch}
Dominique Foata \& Marcel-P. Sch\"utzenberger, 
\emph{Th\'eorie g\'eom\'etrique des polyn\^omes eul\'eriens.} 
Lecture Notes in Math. \Vol{138}, Springer-Verlag, Berlin-New York, 1970. 

\bibitem{Franssens}
Ghislain~R. Franssens.
\newblock On a number pyramid related to the binomial, {D}eleham, {E}ulerian,
  {M}ac{M}ahon and {S}tirling number triangles.
\newblock {\em J. Integer Seq.} \vol9 \no4 (2006), Article 06.4.1, 34 pp.


\bibitem[Frobenius(1910)]{Frobenius}
G. Frobenius, 
\"Uber die Bernoullischen Zahlen
und die Eulerschen Polynome.
\emph{Sitzungsberichte der K\"oniglich Preussischen Akademie der Wissenschaften,
1910}, 
Berlin, 1910,
809--847.

\bibitem{Gawronski-asymp}
Wolfgang Gawronski,
On the asymptotic distribution of the zeros of Hermite, Laguerre, and
Jonqui\`ere polynomials. 
\emph{J. Approx. Theory} \vol{50} (1987), no. 3, 214--231. 

\bibitem[Gawronski and Neuschel(2013)]{GawronskiN}
Wolfgang Gawronski \& Thorsten Neuschel,
Euler--Frobenius numbers.    
\emph{Integral Transforms and Special Functions}, to appear. 
Published online 22 Jan 2013.


\bibitem{GawronskiS}
Wolfgang Gawronski \& Ulrich Stadtm\"uller, 
On the zeros of Lerch's transcendental function with real parameters. 
\emph{J. Approx. Theory} \vol{53} (1988), no. 3, 354--364. 

\bibitem{CM}
Ronald L. Graham, Donald E. Knuth \& Oren Patashnik,
\emph{Concrete Mathematics}.
2nd ed., Addison-Wesley,  Reading, MA, 1994.

\bibitem[Happacher(2001)]{Happacher}
Max Happacher,
The discrepancy distribution of stationary multiplier
rules for rounding probabilities.
\emph{Metrika} \vol{53} (2001), no. 2, 171--181. 

\bibitem[Happacher and Pukelsheim(1996)]{HappacherP96}
Max Happacher \& Friedrich Pukelsheim,
 Rounding probabilities: unbiased multipliers. 
\emph{Statist. Decisions} \vol{14} (1996), no. 4, 373--382.

\bibitem[Happacher and Pukelsheim(1996)]{HappacherP00}
Max Happacher \& Friedrich Pukelsheim,
Rounding probabilities: maximum probability and minimum complexity multipliers. 
\emph{J. Statist. Plann. Inference}
\vol{85} (2000), no. 1-2, 145--158. 

\bibitem{HLP}
G. H. Hardy, J. E. Littlewood \& G. P\'olya,
\book{Inequalities}. 
2nd ed., Cambridge, at the University Press, 
1952. 


\bibitem{He}
Tian-Xiao He,
Eulerian polynomials and B-splines. 
\emph{J. Comput. Appl. Math.} \vol{236} (2012), no. 15, 3763--3773.

\bibitem{Hensley}
Douglas Hensley,
Eulerian numbers and the unit cube.
\emph{Fibonacci Quart.} \vol{20} (1982), no. 4, 344--348.


\bibitem[Hirzebruch(2008)]{Hirzebruch}
Friedrich Hirzebruch,
Eulerian polynomials.
\emph{M\"unster J. of Math.} \vol1 (2008), 9--14.

\bibitem{SJ276}
Pawe{\l} Hitczenko \& Svante Janson,
Weighted random staircase tableaux.
Preprint, 2012.
\arxiv{1212.5498}

\bibitem{SJIII}
Svante Janson,
\emph{Gaussian Hilbert Spaces}.
Cambridge Univ. Press, Cambridge, 1997.

\bibitem[Janson(2006)]{SJ175} 
Svante Janson,
Rounding of continuous random variables and oscillatory asymptotics. 
\emph{Ann. Probab.} \vol{34} \no5 (2006),
1807--1826.

\bibitem{SJ262}
Svante Janson,
Asymptotic bias of some election methods.
\emph{Annals of Operations Research}.
Published online April 2012.


\bibitem{Kallenberg}
Olav Kallenberg,
\book{Foundations of Modern Probability}.
2nd ed., Springer, New York, 2002. 

\bibitem{Kimber}
A. C. Kimber, %
Eulerian numbers and links with some statistical procedures.
\emph{Utilitas Math.} \vol{31} (1987), 57--65. 

\bibitem[Kopfermann(1991)]{Kopfermann}
Klaus Kopfermann,
\emph{Mathematische Aspekte der Wahlverfahren}.
Wissenschaftsverlag, 
Mannheim, 1991.


\bibitem[Laplace(1820)]{Laplace}
Pierre-Simon de Laplace, 
\emph{Th\'eorie analytique des probabilit\'es}.
3rd ed., Courcier, Paris 1820;
\emph{{\OE}uvres completes de Laplace, vol. 7},
Gauthier-Villars, Paris, 1886.
\url{http://gallica.bnf.fr/ark:/12148/bpt6k775950}

\bibitem[Lerch(1887)]{Lerch}
M. Lerch, 
Note sur la fonction 
$\scriptstyle{\mathfrak K}(w,x,s) = 
\sum_{k=0}^\infty \frac{e^{2k\pi ix}}{(w+k)^s}$. 
\emph{Acta Mathematica} \textbf{11} (1887), 19--24.

\bibitem{LesieurN}
L\'eonce Lesieur \&  Jean-Louis Nicolas,
Double interpolation des nombres euleriens. 
\emph{European J. Combin.} \vol{16} (1995), no. 1, 41--57. 

\bibitem{LiuWang}
Lily L. Liu \&  Yi Wang,
A unified approach to polynomial sequences with only real zeros.
\emph{Adv. Appl. Math.},
\vol{38} \no4 (2007), 542--560.

\bibitem[MacMahon(1920)]{MacMahon}
P. A. MacMahon, 
The divisors of numbers. 
\emph{Proc. London Math. Soc. Ser. 2} \vol{19} (1920), no. 1, 305--340.

\bibitem{MeinardusMerz}
G\"unter Meinardus \& Gerhard Merz, 
Zur periodischen Spline-Interpolation. 
\emph{Spline-Funktionen
(Proceedings, Oberwolfach, 1973)}, 
Bibliographisches Inst., Mannheim, 1974,
pp. 177--195.

\bibitem{Merz1}
Gerhard Merz,
Interpolation mit periodischen Spline-Funktionen I. 
\emph{J. Approx. Theory} \vol{30} (1980), no. 1, 11--19.

\bibitem{Merz2}
Gerhard Merz,
Interpolation mit periodischen Spline-Funktionen  II. 
\emph{J. Approx. Theory} \vol{30} (1980), no. 1, 20--28. 

\bibitem[Mosteller, Youtz and Zahn(1967)]{MostellerYZ}
Frederick Mosteller, Cleo Youtz \& Douglas Zahn,
The distribution of sums of rounded percentages.
\emph{Demography} \vol{4} (1967), 850--858. 

\bibitem[Nicolas(1992)]{Nicolas}
Jean-Louis  Nicolas, 
An integral representation for Eulerian numbers. 
\inbook{Sets, Graphs and Numbers (Budapest, 1991)}, 513--527, 
Colloq. Math. Soc. J\'anos Bolyai, 60, North-Holland, Amsterdam, 1992. 


\bibitem{NIST}
\emph{NIST Digital Library of Mathematical Functions}.
\url{http://dlmf.nist.gov/}

\bibitem{OEIS}
\emph{The On-Line Encyclopedia of Integer Sequences}.
\url{oeis.org}

\bibitem{Petrov}
Valentin V. Petrov,
\emph{Sums of Independent Random Variables}.
Springer-Verlag, Berlin, 1975.

\bibitem{Peyerimhoff}
Alexander Peyerimhoff,
\emph{Lectures on Summability}.
Lecture Notes in Math. \Vol{107},
Springer-Verlag, Berlin-New York, 1969. 

\bibitem{Plonka}
Gerlind Plonka, 
Periodic spline interpolation with shifted nodes. 
\emph{J. Approx. Theory} \vol{76} (1994), no. 1, 1--20.

\bibitem[\Polya(1913)]{Polya}
Georg \Polya,
Berechnung eines bestimmten Integrals. 
\emph{Math. Ann.} \vol{74} (1913), no. 2, 204--212. 

\bibitem[Pukelsheim(2013+)]{Pukelsheim}
Friedrich Pukelsheim,
\book{Proportional Representation -- Rigorous Methodology}.
In preparation.


\bibitem{Reimer:extremal} 
M. Reimer,  
Extremal spline bases.
\emph{J. Approx. Theory} \vol{36} (1982), no. 2, 91--98. 

\bibitem{Reimer:main}
M. Reimer, 
The main roots of the Euler--Frobenius polynomials.
\emph{J. Approx. Theory} \vol{45} (1985), no. 4, 358--362.


\bibitem{ReimerS}
M. Reimer \& D. Siepmann, 
An elementary algebraic representation of polynomial spline interpolants for
equidistant lattices and its condition. 
\emph{Numer. Math.} \vol{49} (1986), no. 1, 55--65.

\bibitem[Riordan(1958)]{Riordan}
John Riordan,
\emph{An Introduction to Combinatorial Analysis}.
\Wiley, New York, 1958.

\bibitem[Sachkov(1978)]{Sachkov}
Vladimir N. Sachkov,
\emph{Probabilistic Methods in Combinatorial Analysis}.
(Russian.)
Nauka, Moscow, 1978. 
English translation and revision:
Cambridge University Press, Cambridge, 1997. 

\bibitem{SchmidtS}
Frank Schmidt \& Rodica Simion, 
Some geometric probability problems involving the Eulerian numbers. 
\emph{Electron. J. Combin.} \vol4 (1997), no. 2, Research Paper 18, 13 pp. 

\bibitem[Schoenberg(1946)]{Schoenberg46}
I. J. Schoenberg, 
Contributions to the problem of approximation of equidistant data by
analytic functions. 
\emph{Quart. Appl. Math.} \vol4 (1946), 45--99 and 112--141. 

\bibitem[Schoenberg(1973)]{Schoenberg}
I. J. Schoenberg, 
\emph{Cardinal Spline Interpolation}.
CBMS Regional Conference Series in Applied Mathematics \Vol{12}. 
SIAM, 
Philadelphia, PA, 1973. 

\bibitem{Schoenberg-LNM}
I. J. Schoenberg, 
Cardinal spline interpolation and the exponential Euler splines. 
\emph{Functional Analysis and its Applications 
(Proc., Madras, 1973)},
Lecture Notes in Math. \Vol{399}, Springer, Berlin, 1974,
pp. 477--489. 

\bibitem{Schumaker}
Larry L. Schumaker, 
\emph{Spline Functions: Basic Theory}. 
Wiley,  New York, 1981.

\bibitem{Siepmann-Dr}
Dietmar Siepmann,
\emph{Kardinale Spline-Interpolation bez\"uglich \"aquidistant verteilter
  Knoten}. 
Dissertation, Dortmund, 1984.

\bibitem{Siepmann}
Dietmar Siepmann,
Cardinal interpolation by polynomial splines: interpolation of data with
exponential growth. 
\emph{J. Approx. Theory} \vol{53} (1988), no. 2, 167--183. 

\bibitem{Simsek}
Yilmas Simsek,
Generating functions for generalized Stirling type numbers, Array type
polynomials, Eulerian type polynomials and their applications.
\arXiv{1111.3848}


\bibitem{Simsek-etal}
Y. Simsek,  T. Kim, D. W. Park, Y. S. Ro, L. C. Jang \& S. H.  Rim,
An explicit formula for the multiple Frobenius-Euler numbers and polynomials. 
\emph{JP J. Algebra Number Theory Appl.} \vol4 (2004), no. 3, 519--529. 


\bibitem[Sira{\v z}dinov(1979)]{Sirazdinov}
S. H. Sira{\v z}dinov, 
Asymptotic expression for Euler numbers. (Russian.)
\emph{Izv. Akad. Nauk UzSSR Ser. Fiz.-Mat. Nauk} \vol{1979}, no. 6, 39--43, 92.

\bibitem{Slepian}
David Slepian,
On the volume of certain polytopes.
Technical memorandum, Bell Telephone Laboratories, 1956.

\bibitem[Stanley(1977)]{StanleyNATO}
Richard P. Stanley,
Eulerian partitions of a unit hypercube.
\emph{Higher Combinatorics (Proc. NATO Advanced Study Inst., Berlin, 1976)},
Reidel, Dordrecht, 1977,   p. 49.


\bibitem[Stanley(1997)]{StanleyI}
Richard P. Stanley,
\emph{Enumerative Combinatorics,
Volume I}.
Cambridge Univ. Press, Cambridge, 1997.

\bibitem{Steingrimsson} 
Einar Steingr\'imsson, 
Permutation statistics of indexed permutations. 
\emph{European J. Combin.} \vol{15} (1994), no. 2, 187--205. 

\bibitem[Sylvester(1861)]{Sylvester}
J. J. Sylvester, 
Sur une propri\'et\'e des nombres premiers qui se rattache au th\'eor\`eme
de Fermat.
\emph{Comptes rendus hebdomadaires des s\'eances de l'Acad\'emie des sciences}
\vol{52} (1861), 161--163.

\bibitem{Tanny}
S. Tanny, 
A probabilistic interpretation of Eulerian numbers. 
\emph{Duke Math. J.} \vol{40} (1973), 717--722.
Corrigenda,
\emph{ibid} \vol{41} (1974), 689. 

\bibitem{terMorsche}
Hennie ter Morsche, 
On the existence and convergence of interpolating periodic spline functions
of arbitrary degree. 
\emph{Spline-Funktionen
(Proceedings, Oberwolfach, 1973)}, 
Bibliographisches Inst., Mannheim, 1974,
pp. 197--214. 

\bibitem{terMorsche:relations} 
Hennie ter Morsche,
On the relations between finite differences and derivatives of cardinal
spline functions. 
\emph{Spline Functions},
Lecture Notes in Mathematics \vol{501}, Springer, 1976, pp. 210--219.

\bibitem{Truesdell}
C. Truesdell, 
On a function which occurs in the theory of the structure of polymers. 
\emph{Ann. of Math. (2)} \vol{46} (1945). 144--157.

\bibitem{Tsumura}
Hirofumi Tsumura,
On a p-adic interpolation of the generalized Euler numbers and its applications.
\emph{Tokyo J. Math.} \vol{10} (1987), no. 2, 281--293. 

\bibitem{WangXuXu}
 Ren-Hong Wang, Yan Xu \&  Zhi-Qiang  Xu,
Eulerian numbers: a spline perspective. 
\emph{J. Math. Anal. Appl.} \vol{370} (2010), no. 2, 486--490. 

\bibitem{WangYeh}
Yi Wang \& Yeong-Nan  Yeh,
Polynomials with real zeros and P{\'o}lya frequency sequences. 
\emph{J. Combin. Theory Ser. A}, \vol{109} \no1 (2005), 63--74.

\bibitem{Weller+}
Steven R. Weller, 
W. Moran, 
Brett Ninness \&
A. D. Pollington, 
Sampling zeros and the Euler--Frobenius polynomials. 
\emph{IEEE Trans. Automat. Control} \vol{46} (2001), no. 2, 340--343. 

\bibitem{Xiong-etal}
Tingyao Xiong, Hung-ping Tsao, Jonathan I. Hall,
General Eulerian Numbers and Eulerian Polynomials.
\arXiv{1207.0430}

\bibitem{XuWang}
Yan Xu \& Ren-hong Wang, 
Asymptotic properties of B-splines, Eulerian numbers and cube slicing. 
\emph{J. Comput. Appl. Math.} \vol{236} (2011), no. 5, 988--995. 


\end{thebibliography}
\end{document}